\newtheorem{theorem}{Theorem}[section]
\newtheorem{lemma}[theorem]{Lemma}
\newtheorem{proposition}{Proposition}
\theoremstyle{definition}
\newtheorem{definition}[theorem]{Definition}
\newtheorem{remark}{Remark}
\def\R{\mathbb{R}}
\newcommand{\lp}{\left(}
\newcommand{\rp}{\right)}
\newcommand{\lc}{\left\{}
\newcommand{\rc}{\right\}}
\newcommand{\der}{\partial}
\newcommand{\bra}{\langle}
\newcommand{\ket}{\rangle}
\newcommand{\ra}{\rightarrow}
\def\se{\mathfrak{se}(2)}
\newcommand{\lsb}{\left[}
\newcommand{\rsb}{\right]}
\newcommand{\scp}[2]{{\left\langle {#1}\, , \, {#2}\right\rangle}}
\newcommand{\wlp}{\frac{\partial\mathcal{L}}{\partial p^{(i)}}}
\newcommand{\wphip}{\frac{\partial\mathcal{\chi}^{\alpha}}{\partial p^{(i)}}}
\newcommand{\wls}{\frac{\partial\mathcal{L}}{\partial \sigma^{(i)}}}
\newcommand{\wphis}{\frac{\partial\mathcal{\chi}^{\alpha}}{\partial\sigma^{(i)}}}
\newcommand{\DD}[2]{\frac{D #1}{D #2}}
\newcommand{\N}{\mathbb{N}}      
\newcommand{\F}{\mathbb{F}}
\newcommand{\Flder}{\rightarrow}
\newcommand{\proa}{A^*G \mbox{$\;$}_{\tau^*} \kern-3pt\times_\alpha
G \mbox{$\;$}_\beta \kern-3pt\times_{\tau^*} A^*G}
\newcommand{\ad}{\mbox{ad}}
\newcommand{\al}{\mathfrak{g}}
\newcommand{\dal}{\mathfrak{g}^{*}}
\newcommand{\cone}{\mathcal{A}}
\newcommand{\dcone}{\mathcal{A}_d}
\newcommand{\ti}{z}
\title[Discretization of the higher-order Lagrange-Poincar\'e equations]{The variational discretization of the constrained higher-order Lagrange-Poincar\'e equations}
\author[Anthony Bloch, Leonardo Colombo and Fernando Jim\'enez]{}
\subjclass{Primary: 37K05 ; Secondary: 37J15; 37N05; 49M25; 49S05; 49J15.}
 \keywords{Variational integrators, discrete mechanical systems, Lagrange-Poincar\'e equations, geo\-me\-tric integration, discrete variational calculus, ordinary differential equations, control of mechanical systems, reduction by symmetries.}
 \email{abloch@umich.edu}
 \email{leo.colombo@icmat.es.edu}
 \email{fernando.jimenez@eng.ox.ac.uk}
\thanks{A. Bloch was supported by NSF grant DMS-1613819 and AFOSR grant
FA 9550-18-0028. L. Colombo was supported by MINECO (Spain) grant MTM2016-76072-P. F. Jim\' enez was supported by the EPSRC project: `'Fractional Variational Integration and Optimal Control"; ref: EP/P020402/1.
We thank the reviewers for their valuable comments, that have helped to improve this work.
}
\begin{document}
\maketitle

\centerline{\scshape Anthony Bloch}
\medskip
{\footnotesize
 \centerline{Department of Mathematics, University of Michigan}
   \centerline{530 Church Street}
   \centerline{ Ann Arbor, Michigan, 48109, USA}
} 

\medskip

\centerline{\scshape Leonardo Colombo}

{\footnotesize
 \centerline{Instituto de Ciencias Matem\'aticas}
 \centerline{Consejo Superior de Investigaciones Cient\'ificas}
   \centerline{Calle Nicol\'as Cabrera 13-15, Campus UAM}
   \centerline{Madrid, 28049}
} 

\medskip

\centerline{\scshape Fernando Jim\'enez}
\medskip
{\footnotesize
 \centerline{ Department of Engineering Science}\centerline{University of Oxford}
   \centerline{Parks Road, Oxford, OX1 3PJ, United Kindom.}
}

\bigskip

 \centerline{(Communicated by the associate editor name)}

\begin{abstract}
 In this paper we investigate a variational discretization for the class of mechanical systems in presence of symmetries described by the action of a Lie group 
which reduces the phase space to a  (non-trivial) principal bundle. By introducing a discrete connection we are able to obtain the discrete constrained higher-order Lagrange-Poincar\'e equations. These  equations describe the dynamics of a constrained Lagrangian system when the Lagrangian function and the constraints depend on higher-order derivatives such as the acceleration, jerk or jounces. The equations, under some mild regularity conditions, determine a well defined (local) flow which can be used to define  a numerical scheme to integrate the constrained higher-order Lagrange-Poincar\'e equations. 
 
 Optimal control problems for underactuated mechanical systems can be viewed as higher-order constrained variational problems. We study how a variational discretization can be used in the construction of variational integrators for optimal control of underactuated mechanical systems where control inputs  act soley 
on the base manifold  of a principal bundle (the shape space). Examples include the energy minimum control of an electron in a magnetic field and two coupled rigid bodies attached at a common center of mass.

\end{abstract}

\section{Introduction}
Reduction theory is one of the fundamental tools in the study of mechanical systems
with symmetries and it essentially concerns the removal of certain variables by using the symmetries of the system and the associated
conservation laws. Such symmetries arise when one has a  Lagrangian which is  invariant
under a Lie group action $G$, i.e. if the Lagrangian function is invariant under the tangent lift of the action of the Lie group on the configuration manifold $Q$. If we denote by $\Phi_g:Q\Flder Q$ this (left-) action, for $g\in G$ then the invariance condition under the tangent lift action is expressed by $L\circ T\Phi_g=L$. If such an invariance property holds when the action $\Phi_g$ is given by left translations on the group $G$, that is, $\Phi_g=L_g$ where $L_g:G\to G$ is given by $L_g(h)=gh$ we say that the Lagrangian $L$ is $G$-invariant. For
a symmetric mechanical system, reduction by symmetries eliminates the directions
along the group variables and thus provides a system with fewer degrees of freedom. 

If the (finite-dimensional) differentiable
manifold $Q$ has local coordinates $(q^i)$, $1\leq i\leq$ dim$\,Q$ and we denote by $TQ$ its
tangent bundle with induced local coordinates $(q^i, \dot{q}^i)$, given a Lagrangian function $L:TQ\rightarrow \R$, its Euler--Lagrange
equations are
\begin{equation}\label{qwer}
\frac{d}{dt}\left(\frac{\partial L}{\partial\dot
q^i}\right)-\frac{\partial L}{\partial q^i}=0, \quad 1\leq i\leq \mbox{dim}\,Q.
\end{equation} As is well-known, when $Q$ is the configuration manifold  of a mechanical system, equations \eqref{qwer} determine its dynamics.

A paradigmatic example of reduction is the derivation of the Euler-Poincar\'e equations from the Euler-Lagrange equations  \eqref{qwer} when the configuration manifold is a Lie group, i.e. $Q=G.$ Assuming that the Lagrangian $L:TG\to\mathbb{R}$ is left invariant under the action of $G$ it is possible to reduce the system by introducing the body fixed velocity $\xi\in\mathfrak{g}$  and the reduced Lagrangian $\ell:TG/G\simeq\mathfrak{g}\to\mathbb{R}$, provided by the invariance condition $\ell(\xi)=L(g^{-1}g,g^{-1}\dot{g})=L(e,\xi)$. The dynamics of the reduced Lagrangian is governed by the \textit{Euler--Poincar\'e} equations (see \cite{Bloch} and \cite{BlKrMaRa} for instance) and given by the system of first order ordinary differential equations
\begin{equation}\label{Euler-Poincare-Eq}
\frac{d}{dt}\lp\frac{\partial\ell}{\partial\xi}\rp=\ad^{*}_{\xi}\lp\frac{\partial\ell}{\partial\xi}\rp.
\end{equation}
This system, together with the reconstruction equation $\xi(t)=g^{-1}(t)\dot{g}(t)$, is equivalent to the Euler-Lagrange equations on $G$, which are given by
\[
\frac{d}{dt}\left(\frac{\partial L}{\partial\dot
g}\right)=\frac{\partial L}{\partial g}\Rightarrow \left\{
\begin{array}{l}
\displaystyle\dot g=g\xi,\\\\
\displaystyle\frac{d}{dt}\lp\frac{\partial\ell}{\partial\xi}\rp=\ad^{*}_{\xi}\lp\frac{\partial\ell}{\partial\xi}\rp.
\end{array}
\right.
\] 

Reduction theory for mechanical systems with symmetries can be also developed by
using a variational principle formulated on a
principal bundle $\pi:Q \to Q/G$, where the principal connection $\mathcal{A}$ is introduced on $Q$ \cite{CeMaRa} (see Definition \ref{def:connection}). The connection yields the bundle isomorphism
$\alpha_{\mathcal{A}}^{(1)}:(TQ)/G \rightarrow T (Q/G) \oplus_{Q/G} \widetilde{\mathfrak {g}},
$
\[
\alpha_{\mathcal{A}}^{(1)}(\lsb v_q\rsb):=\left( T\pi\lp v_q\rp, [ q, \mathcal{A} (v_q)]_{\mathfrak{g}}\right),
\] (see equation \eqref{IsomorphismCont}) where the bracket is the standard Lie bracket on the Lie algebra $\mathfrak{g}$ and $\widetilde{\mathfrak{g}}:=\hbox{Ad} Q$ is the adjoint bundle
$\hbox{Ad} Q:= (Q\times\mathfrak{g})/G$. A curve $q(t)\in Q$ induces the two curves
$p(t):=\pi(q(t))\in Q/G$ and $\sigma(t) = [ q(t), \mathcal{A} (\dot q(t))]_{\mathfrak{g}}\in\widetilde{\mathfrak{g}} .$

Variational Lagrangian reduction \cite{CeMaRa} states that the
Euler-Lagrange equations on $Q$ with a $G$-invariant Lagrangian $L$
are equivalent to the Lagrange-Poincar\'e equations on $TQ/G \cong T
(Q/G) \oplus_{Q/G}  \widetilde{\mathfrak{g}}$ with reduced
Lagrangian $\mathcal{L}:T(Q/G) \oplus_{Q/G}  \widetilde{\mathfrak{g}}\Flder\R$. The Lagrange-Poincar\'e equations read
\begin{equation}\label{Intro-LagPoincareGeneral}
\left\{
\begin{array}{l}
\displaystyle\vspace{0.2cm}\DD{}{t}\frac{\partial \mathcal{L}}{\partial\sigma} - \mbox{ad}^*_{\sigma}\frac{\partial \mathcal{L}}{\partial\sigma}=0
,\\
\displaystyle\frac{\partial \mathcal{L}}{\partial p} - \DD{}{t}\frac{\partial \mathcal{L}}{\partial\dot{p}} = \scp{\frac{\partial \mathcal{L}}{\partial\sigma}}{i_{\dot{p}}\widetilde{\mathcal{B}}},
\end{array}
\right.
\end{equation} where $\widetilde{\mathcal{B}}$ is the reduced curvature form associated
to the principal connection $\mathcal{A}$ and $D/Dt$ denotes the 
covariant derivative in the associated bundle (see Definition \ref{curvature}).

The derivation of  variational integrators for \eqref{qwer} and \eqref{Euler-Poincare-Eq} from the discretization of variational principles has received a lot attention from the Dynamical Systems Geometric Mechanics community in the recent years \cite{MMM}, \cite{MMS}, \cite{MaWe}, \cite{MPS}, \cite{MPS2}, \cite{Mose} (and in particular for optimal control of mechanical systems \cite{Benito},  \cite{BlCrNoSa}, \cite{BlochLeok}, \cite{BHM}, \cite{Cedric1}, \cite{Cedric2}, \cite{CJdD}, \cite{CMdD}, \cite{Sigrid}, \cite{Leok2}, \cite{Sina}). The preservation of the symplectic form and momentum map are important properties which guarantee the competitive qualitative and quantitative behavior of the proposed
methods and mimic the corresponding properties of the continuous problem. That is, these methods allow substantially more accurate simulations at lower cost for higher-order problems with constraints.  Moreover, if the system is subject to constraints, then, under a regularity condition, it can be shown  that the system also preserves a symplectic form or a Poisson structure in the reduced case (\cite{CoMdDZu2013} and \cite{CMdD} for instance).

The construction of variational integrators for mechanical systems where the configuration space is a principal bundle has been studied in the geometric framework of Lie groupoids \cite{MMM} and as a motivation for the construction of a discrete time connection form \cite{Leok1}, \cite{FZ2}. This line of research has  been further developed in the last decade by T. Lee, M. Leok and H. McClamroch \cite{melvinbook}. We  focus on systems whose phase space is of  higher-order, i.e. $T^{(k)}Q$ \cite{IFAC}, \cite{GHMRV10}, \cite{GHMRV12}, and moreover is invariant under the action of symmetries. The Euler-Lagrange and Lagrange-Poincar\'e equations  for these systems were introduced by F. Gay-Balmaz, D. Holm and T. Ratiu in \cite{GHR2011}.  In this work, we aim to develop their discrete analogue for non-trivial principal bundles and its extension to constrained systems (where the constraints will be as well of the higher-order type).  With this in mind we employ the discrete Hamilton's principle by introducing a discrete connection  and using  Lagrange multipliers in order to obtain discrete paths that approximately satisfy the dynamics and the constraints. 
 As examples, we will illustrate our theory by applying the obtained discrete equations to the problem of energy minimum control of an electron in a magnetic field and two coupled rigid bodies attached at a common center of mass.

The structure of the work is as follows: Section $2$ introduces preliminaries on geometric mechanics, Lagrange-Poincar\'e equations, higher-order tangent bundles and the derivation of the constrained higher-order Lagrange-Poincar\'e equations (Theorem \ref{LPtheorem}). Section $3$ starts by introducing discrete mechanics and the notion of discrete connection. Next, we study the variational discretization of the constrained higher-order Lagrange-Poincar\'e equations to obtain a discrete time flow that integrates the continuous time constrained higher-order Lagrange-Poincar\'e equations. Moreover we provide sufficient regularity conditions for the discrete flow to exist. We proceed by treating the second-order case (the discrete constrained Lagrange-Poincar\'e equations are given in Theorem \ref{TheoremDisc1} and the regularity conditions in Proposition \ref{proposition1}) as an illustration of our approach. Then we 
carry out the  full higher-order case (the equations are given in Theorem \ref{HOTheorem}, while the regularity conditions are in Proposition \ref{proposition2}). Finally, in Section $4$, we apply the discrete equations to underactuated mechanical systems in two examples of optimal control, showing that they give rise to a meaningful discretization of the continuous systems.

\section{Constrained higher-order Lagrange-Poincar\'e equations}
In this section we introduce some preliminaries about geometric mechanics on Lie groups, Lagrange-Poincar\'e reduction, higher order tangent bundles and we study the constrained variational principle for higher-order mechanical systems on principal bundles.

\subsection{Mechanics on Lie groups and Euler-Poincar\'e equations}

\begin{definition}A \textit{Lie group} is a smooth manifold $G$ that is a group and for which the operations of multiplication $(g,h)\mapsto gh$ for $g,h\in G$ and inversion, $g\mapsto g^{-1}$, are smooth.
\end{definition}

\begin{definition}
A \textit{symmetry} of a function $F:G\to\mathbb{R}$ is a map $\phi:G\to G$ such that $F\circ\phi=F$. In such a case $F$ is said to be a $G$-invariant function under $\phi$.
\end{definition}
\begin{definition}Let $G$ be a Lie group with identity element $e\in G$. A \textit{left-action} of $G$ on a manifold $Q$ is a smooth mapping $\Phi:G\times Q\to Q$ such that $\Phi(e,q)=q$ $\forall q\in Q$, $\Phi(g,\Phi(h,q))=\Phi(gh,q)$ $\forall g,h\in G, q\in Q$ and for every $g\in G$, $\Phi_g:Q\to Q$ defined by $\Phi_{g}(q):=\Phi(g,q)$ is a diffeomorphism. 

$\Phi:G\times Q\to Q$ is a right-action if it satisfies the same conditions as for a left action
except that $\Phi(g,\Phi(h,q))=\Phi(hg,q)$ $\forall g,h\in G, q\in Q$. \end{definition}

We often use the notation $gq:=\Phi_{g}(q)=\Phi(g,q)$ and say that $g$ acts on $q$. All actions of Lie groups will be assumed to be smooth.

Let $G$ be a finite dimensional Lie group and let $\mathfrak{g}$ denote the Lie algebra associated to $G$ defined as $\mathfrak{g}:=T_{e}G$, the tangent space at the identity $e\in G$.  Let $L_{g}:G\to G$ be the left translation of the element $g\in G$ given by $L_{g}(h)=gh$ for $h\in G$. Similarly, $R_g$ denotes the right translation of the element $g\in G$ given by $R_{g}(h)=hg$ for $h\in G$. $L_g$ and $R_g$ are diffeomorphisms on $G$ and a left-action (respectively right-action) from $G$ to $G$ \cite{Holmbook}. Their tangent maps  (i.e, the linearization or tangent lift) are denoted by $T_{h}L_{g}:T_{h}G\to T_{gh}G$ and $T_{h}R_{g}:T_{h}G\to T_{hg}G$, respectively. Similarly, the cotangent maps (cotangent lift) are denoted by $T_{h}^{*}L_{g}:T^{*}_{h}G\to T^{*}_{gh}G$ and $T_{h}^{*}R_{g}:T^{*}_{h}G\to T^{*}_{hg}G$, respectively. It is well known that the tangent and cotangent lifts are actions (see \cite{Holmbook}, Chapter $6$).

Let $\Phi_g:Q\to Q$ for any $g\in G$ be a left action on $G$; a function $f:Q\to\mathbb{R}$ is said to be \textit{invariant} under the action $\Phi_g$, if $f\circ\Phi_g=f$, for any $g\in G$ (that is, $\Phi_g$ is a symmetry of $f$). The \textit{Adjoint action}, denoted $\hbox{Ad}_{g}:\mathfrak{g}\to\mathfrak{g}$ is defined by $\hbox{Ad}_{g}\chi:=g\chi g^{-1}$ where $\chi\in\mathfrak{g}$. Note that this action represents a change of basis on the Lie algebra.

If we assume that the Lagrangian $L\colon TG\to\mathbb{R}$ is $G$-invariant under the tangent lift of left translations, that is $L\circ T_{g}L_{g^{-1}}=L$ for all $g\in G$, then it is possible to obtain a reduced Lagrangian $\ell\colon\mathfrak{g}\to\mathbb{R}$, where $$\ell(\xi) = L(g^{-1}g,T_{g}L_{g^{-1}}(\dot{g}))= L(e,\xi).$$The reduced Euler--Lagrange equations, that is, the Euler--Poincar{\'e} equations (see, e.g., \cite{Bloch}, \cite{Holmbook}), are given by the system of $n$ first order ODE's \begin{align}
\frac{d}{dt}\frac{\partial\ell}{\partial\xi} = \ad^{*}_{\xi}\frac{\partial\ell}{\partial\xi}.\label{eq_ep_intro}
\end{align} where $\ad^{*}:\mathfrak{g}\times\mathfrak{g}^{*}\to\mathfrak{g}^{*}$, $(\xi,\mu)\mapsto\ad^{*}_{\xi}\mu$ is the \textit{co-adjoint operator} defined by $\langle\ad_{\xi}^{*}\mu,\eta\rangle=\langle\mu,\ad_{\xi}\eta\rangle$ for all $\eta\in\mathfrak{g}$ with $\ad:\mathfrak{g}\times\mathfrak{g}\to\mathfrak{g}$ the \textit{adjoint operator} given by $\ad_{\xi}\eta:=[\xi,\eta]$, where $[\cdot,\cdot]$ denotes the Lie bracket of vector fields on the Lie algebra $\mathfrak{g}$, and where $\langle\cdot,\cdot\rangle:\mathfrak{g}^{*}\times\mathfrak{g}\to\mathbb{R}$ denotes the so-called \textit{natural pairing} between vectors and co-vectors defined by $\langle\alpha,\beta\rangle:=\alpha\cdot\beta$ for $\alpha\in\mathfrak{g}^{*}$ and $\beta\in\mathfrak{g}$ where $\alpha$ is understood as a row vector and $\beta$ a column vector. For matrix Lie algebras $\langle\alpha,\beta\rangle=\alpha^{T}\beta$ (see \cite{Holmbook}, Section $2.3$ pp.$72$ for details).

Using this pairing between vectors and co-vectors one can write a useful relation between the tangent and cotangent lifts \begin{equation}\label{relation-cltl}\langle\alpha,T_{h}L_{g}(\beta)\rangle=\langle T^{*}_{h}L_{g}(\alpha),\beta\rangle\end{equation} for $g,h\in G$, $\alpha\in\mathfrak{g}^{*}$ and $\beta\in\mathfrak{g}$.

The Euler--Poincar{\'e} equations together with the reconstruction equation $\xi = T_{g}L_{g^{-1}}(\dot{g})$ are equivalent to the Euler--Lagrange equations on $G$.


\subsection{Geometry of principal bundles}

In this subsection we recall the basic tools for analysis of  the geometry of principal bundles that are useful in this paper (for more details see \cite{CeMaRa} and references therein).

\begin{definition}\label{def:connection}
Let $G$ be a Lie group and $\mathfrak{g}$ its Lie algebra. Given a free and proper left Lie group action $\Phi:G\times Q\to Q$, one can consider the principal bundle  $\pi:Q \to Q/G$. A \textit{connection} $\mathcal{A}$ on the principal bundle $\pi$ is a one-form on $Q$ taking values on $\mathfrak{g}$, such that $\mathcal{A}(\xi_{Q}(q))=\xi,$ for all  $\xi\in\mathfrak{g}, q\in Q$ and {\rm $\Phi^{*}_{g}\mathcal{A}=\mbox{Ad}_{g}\mathcal{A}$ where $\xi_{Q}$} is the infinitesimal generator associated with $\xi$ defined as $\xi_{Q}(q):=\frac{d}{dt}\Big{|}_{t=0}q\cdot\exp(t\xi).$
\end{definition}
The associated bundle $N$ with standard fiber $M$ (a smooth manifold), is defined as
\begin{equation}\label{AssBun}
N=Q\times_{G}M=(Q\times M)/G,
\end{equation}
where the action of $G$ on $(Q\times M)$ is diagonal, i.e. given by $g(q,m)=(gq,gm)$ for $q\in Q$ and $m\in M$. The orbit of $(q,m)$ is denoted $[q,m]_G$ or simply $[q,m]$. The projection $\pi_N:N\Flder Q/G$ is given by $\pi_N([q,m]_G)=\pi(q)$ and it is a surjective submersion.
The {\em adjoint bundle} is the associated vector bundle with $M=\mathfrak{g}$ under the adjoint action by the inverse element $g^{-1}\in G$, $\xi \mapsto$ Ad$_{g^{-1}}\xi$, and is denoted 
\begin{equation}\label{AdjointBundle}
\mbox{Ad}Q := Q \times_G\mathfrak{g}.
\end{equation}  
We will usually employ the short-hand notation $\widetilde{\mathfrak{g}}:=$Ad$Q$. The  orbits in this case are denoted $[q,\eta]_{\al}$ for $q\in Q$ and $\eta\in\al$ . Ad$Q$ is a Lie algebra bundle, that is, each fibre is a Lie algebra with the Lie bracket defined by
\[
\left[ [ q, \xi ]_{\mathfrak{g}}, [q, \eta]_{\mathfrak{g}}\right] = \left[ q, [\xi, \eta] \right]_{\mathfrak{g}}.
\]

Reduction theory for mechanical systems with symmetries can be performed by a variational principle formulated on a principal bundle $\pi:Q \to Q/G$, with fixed
principal connection $\mathcal{A}$ on $Q$ (see \cite{CeMaRa}). In other words, the reduced Lagrangian will be defined on the reduced space $TQ/G$, say $\mathcal{L}:TQ/G\Flder\R$. The bundle isomorphism  $\alpha_{\mathcal{A}}^{(1)}:TQ/G \rightarrow T (Q/G) \times_{Q/G} \widetilde{\mathfrak {g}}$, provided by the connection,  will facilitate the study of the suitable variations. It is defined by
\begin{equation}\label{IsomorphismCont}
\alpha_{\mathcal{A}}^{(1)}(\lsb v_q\rsb):=\left( T\pi\lp v_q\rp, [ q, \mathcal{A} (v_q)]_{\mathfrak{g}}\right),
\end{equation}
where the bracket is the standard Lie bracket on the Lie algebra $\mathfrak{g}$, $v_q\in T_{q}Q$ and $[v_q]\in (T_{[q]_{G}}Q)/G$ with $[q]_{G}\in Q/G$. A curve $q(t)\subset Q$ induces the two curves
$p(t):=\pi(q(t))\subset Q/G$ and $\sigma(t) := [ q(t), \mathcal{A} ((q(t),\dot q(t)))]_{\mathfrak{g}}\subset\widetilde{\mathfrak{g}},$ where we denote by $(q(t),\dot q(t))$ the local coordinates of $v_{q(t)}\in T_{q(t)}Q$ at each $t$.

\begin{definition}\label{curvature}The connection $\mathcal{A}$ also allows to define the curvature form $\mathcal{B}$, a $2$-form on $Q$ taking values on $\mathfrak{g}$, determined by 
$$\mathcal{B}( v_q, u_q):=\mathbf{d}\mathcal{A}(v_q, u_q)-[\mathcal{A}(v_q),\mathcal{A}(u_q)]_{\mathfrak{g}}\in\al,$$ where $u _q , v_q $ are arbitrary vectors in $T_qQ$ such that $T_q\pi(u_q)=u_p $ and $T_q \pi ( v _q )= v _p $, with $p=\pi(q)$. The curvature form $\mathcal{B}$ induces a  $\widetilde{\mathfrak{g}}$-valued two-form $\widetilde {\mathcal{B}} $ on $Q/G$ defined by
\begin{equation}\label{reducecurvature}
\widetilde{ \mathcal{B} }(u_p , v_p )=\left[q, \mathcal{B} _q( u _q , v _q )\right]_{\mathfrak{g}}\in\widetilde{\al},\quad u_p , v_p\in T_p (Q/G),
\end{equation}
where $u_q,v_q$  and $u_p,v_p$ are related as above. The two-form $\widetilde{ \mathcal{B} }$ is called the \emph{reduced curvature form} (for more details see \cite{CeMaRa} and references therein).
\end{definition}

\subsubsection{The covariant derivative}It is well know that the covariant derivative on a vector bundle induces an associated covariant derivative on its dual bundle. In this work, as in \cite{CeMaRa} and \cite{GHR2011}, we use this fact to define the covariant derivative in the dual of the adjoint bundle. If $\tilde{\sigma}(t)$ is a curve on $\tilde{\mathfrak{g}}^{*}$ the covariant derivative of $\tilde{\sigma}(t)$ is defined in such a way that for some curve $\sigma(t)$ on $\tilde{\mathfrak{g}}$, both, $\tilde{\sigma}(t)$ and $\sigma(t)$ project onto the same curve $p(t)$ on $Q/G$. Then $$\frac{d}{dt}\langle\tilde{\sigma}(t),\sigma(t)\rangle=\Big{\langle}\frac{D\tilde{\sigma}(t)}{Dt},\sigma(t)\Big{\rangle}+\Big{\langle}\tilde{\sigma}(t),\frac{D\sigma(t)}{Dt}\Big{\rangle}.$$ In the same way one can define the covariant derivative on $T^{*}(Q/G)$ and therefore a covariant derivative on $T^{*}(Q/G)\times_{Q/G}\tilde{\mathfrak{g}}^{*}$ (see \cite{CeMaRa} Section $3$ for more details).
\subsection{Lagrange-Poincar\'e reduction}\label{LPsubsection}
Lagrangian reduction by stages (\cite{CeMaRa}, Theorem 3.4.1) states that the
Euler-Lagrange equations \eqref{qwer} with a $G$-invariant Lagrangian $L:TQ\Flder\R$
are equivalent to the Lagrange-Poincar\'e equations on $TQ/G \cong T
(Q/G) \times_{Q/G}  \widetilde{\mathfrak{g}}$ (under the isomorphism \eqref{IsomorphismCont}) with reduced
Lagrangian $\mathcal{L}:T
(Q/G) \times_{Q/G}  \widetilde{\mathfrak{g}}\Flder\R$. The Lagrange-Poincar\'e equations read
\begin{equation}\label{LagPoincareGeneral}
\left\{
\begin{array}{l}
\displaystyle\vspace{0.2cm}\DD{}{t}\frac{\partial \mathcal{L}}{\partial\sigma} - \mbox{ad}^*_{\sigma}\frac{\partial \mathcal{L}}{\partial\sigma}=0
,\\
\displaystyle\frac{\partial \mathcal{L}}{\partial p} - \DD{}{t}\frac{\partial \mathcal{L}}{\partial\dot{p}} = \scp{\frac{\partial \mathcal{L}}{\partial\sigma}}{i_{\dot{p}}\widetilde{\mathcal{B}}},
\end{array}
\right.
\end{equation}
where $\widetilde{\mathcal{B}}$ is the reduced curvature form defined in \eqref{reducecurvature} and $D/Dt$ denotes
the covariant derivative in the associated bundle. Note that we are employing coordinates $(p,\dot{p},\sigma)$ for $T
(Q/G) \times_{Q/G}  \widetilde{\mathfrak{g}}$. Moreover, $i_{\dot{p}}\widetilde{\mathcal{B}}$ denotes the $\widetilde{\mathfrak{g}}$-valued $1$-form on $Q/G$ defined by $i_{\dot{p}}\widetilde{B}(\cdot)=\widetilde{B}(\dot{p},\cdot)$.

Consider a local trivialization of the principal bundle $\pi:Q\to Q/G$, i.e. a trivial principal bundle $\pi_{U}:U\times G\to U$ where $U$ is an open subset of $Q/G$  with structure group $G$ acting on the second factor by left multiplication. Denote by $(p^{s})$, $s=1,\ldots,r=\hbox{dim}(Q)-\hbox{dim}(G)$ local coordinates on $U$ and define maps $e_b:U\to\mathfrak{g}$ satisfying that for each $p\in U$, $\{e_b\}$ is a basis of $\mathfrak{g}$, $b=1,\ldots,\hbox{dim}(G)$. We  choose  the  standard  connection  on $U$, that is, at a tangent vector $(p,g,\dot{p},\dot{g})\in T_{(p,g)}(U\times G)$ we have $\mathcal{A}(p,g,\dot{p},\dot{g})=$Ad$_{g}(A_{e}(p)\dot{p}+\xi)$ where $\xi=g^{-1}\dot{g}$, $e$ is the identity of $G$, and $A_{e}:U\to\mathfrak{g}$ is a $1$-form given by $A_{e}(p)\dot{p}=\mathcal{A}(p,e,\dot{p},0)$.

Denote by $\bar{e}_{b}$ a section of $\widetilde{\mathfrak{g}}$ given by $\bar{e}_{b}(p)=[p,e,e_b(p)]_{\mathfrak{g}}$, $\sigma=\sigma^{a}\bar{e}_{a}$, and $\displaystyle{\bar{p}_b=\frac{\partial\mathcal{L}}{\partial\sigma}(\bar{e}_{b})}$. With this notation the Lagrange-Poincar\'e equations \eqref{LagPoincareGeneral} read (see \cite{MaSc} and \cite{CeMaRa} Section 4.2 for details)
\begin{equation}\label{LagPoinLocal}
\begin{split}
\frac{d}{dt}\bar{p}_{b}&=\bar{p}_a(C_{db}^{a}\sigma^{d}-C_{db}^{a}A_{s}^{d}\dot{p}^{s}),\\
\frac{\partial\mathcal{L}}{\partial p^{s}}-\frac{d}{dt}\frac{\partial\mathcal{L}}{\partial \dot{p}^{s}}&=\frac{\partial\mathcal{L}}{\partial\sigma^{a}}(B_{l\,s}^{a}\dot{p}^{l}+C_{db}^{a}\sigma^{d}A_{s}^{b}),
\end{split} 
\end{equation}
where $C_{bd}^{a}$ are the structure constants of the Lie algebra of $\mathfrak{g}$, $B_{l\,s}^{a}$ are the coefficients of the curvature in the local trivialization and $A_{s}^{a}(p)$ are the coefficients of $A_{e}$ for given local coordinates $p^{s}$ in $U$ determined by $(A_{e}(p)\dot{p})^{a}e_{a}=A_{s}^{a}(p)\dot{p}^{s}e_a$, $A_{e}(p)\dot{p}=\mathcal{A}(p,e,\dot{p},0).$

\subsection{Higher-order tangent bundles}
It is
possible to introduce an equivalence relation on the set $C^{k}(\R,
Q)$ of $k$-differentiable curves from $\R$ to $Q$ (see \cite{LR1} for more details): By definition,
two given curves in $Q$, $\gamma_1(t)$ and $\gamma_2(t)$, where
$t\in I\subset\R$ ($0\in I$), have a contact of order $k$
at $q_0 = \gamma_1(0) = \gamma_2(0)$, if there is a local chart
$(U,\varphi)$ of $Q$ such that $q_0 \in U$ and
$$\frac{d^s}{dt^s}\left(\varphi \circ \gamma_1(t)\right){\big{|}}_{t=0} =
\frac{d^s}{dt^s} \left(\varphi
\circ\gamma_2(t)\right){\Big{|}}_{t=0}\; ,$$ for all $s = 0,...,k.$
This is a well defined equivalence relation on $C^{k}(\R,Q)$ and the
equivalence class of a  curve $\gamma$ will be denoted by $[\gamma
]_{q_0}^{(k)}.$ The set of equivalence classes will be denoted by
$T^{(k)}Q$ and it is not hard to show that it has the natural
structure of a differentiable manifold. Moreover, $ \tau_Q^k  :
T^{(k)} Q \rightarrow Q$ where $\tau_Q^k
\left([\gamma]_{q_0}^{(k)}\right) = \gamma(0)$, is a fiber bundle
called the \emph{tangent bundle of order $k$} (or {\it higher-order tangent bundle}) of $Q$. In the sequel we will employ HO as short for higher-order.

Given a differentiable function $f: Q\longrightarrow \R$ and $l \in
\{0,...,k\}$, its $l$-lift $f^{(l, k)}$ to $T^{(k)}Q$, $0\leq l\leq
k$, is the
differentiable
function defined as
\[
f^{(l, k)}([\gamma]^{(k)}_0)=\frac{d^l}{dt^l}
\left(f \circ \gamma(t)\right){\Big{|}}_{t=0}\; .
\]
Of course, these definitions can be applied to functions defined on
open sets of $Q$.

From a local chart $(q^i)$ on a neighborhood $U$ of $Q$, it is
possible to induce local coordinates
 $(q^{(0)i},q^{(1)i},\dots,q^{(k)i})$ on
$T^{(k)}U=(\tau_Q^k)^{-1}(U)$, where $q^{(s)i}=(q^i)^{(s,k)}$ if
$0\leq s\leq k$. Sometimes, we will use the standard  conventions,
$q^{(0)i}\equiv q^i$, $q^{(1)i}\equiv \dot{q}^i$, $q^{(2)i}\equiv
\ddot{q}^i$, etc.

\subsubsection{HO quotient space:}

A smooth map $f:M\to N$ induces a map $T^{(k)}f:T^{(k)}M\to T^{(k)}N$ given by \begin{equation}\label{liftfunction}T^{(k)}f([\gamma]_{q_0}^{(k)}):=[f\circ\gamma]_{f(q_0)}^{(k)}.\end{equation} The action of a Lie group $\Phi_g$ is lifted to an action
$\Phi^{(k)}_g:T^{(k)}Q\ra T^{(k)}Q$, given by
$$\Phi_{g}^{(k)}([\gamma]_{q_0}^{(k)}):=T^{(k)}\Phi_{g}([\gamma]_{q_0}^{(k)})=[\Phi_{g}\circ
\gamma]^{(k)}_{\Phi_{g}(q_0)}.$$ If $\Phi_g$ is free and proper, we
get a principal $G$-bundle $T^{(k)}Q\ra T^{(k)}Q/G,$ which is a fiber bundle over $Q/G.$ The class of an element
$[\gamma]_{q_0}^{(k)}\in T^{(k)}_{q_0}Q$ in the quotient is denoted
$\displaystyle{[[\gamma]_{q_0}^{(k)}]_{G}}.$  From \cite{CeMaRa} (see Lemma 2.3.4) we know that the covariant
derivative of a curve $\sigma(t)=[q(t),\xi(t)]_{\al}\subset\widetilde{\mathfrak{g}}$ relative
to a principal connection $\mathcal{A}$ is given by
\begin{equation*}\label{CeMaRa}
\frac{D}{Dt}\sigma(t)=[q(t),\dot{\xi}(t)-[\mathcal{A}(q(t),\dot{q}(t)),\xi(t)]]_{\al}.\end{equation*}
In the particular case when
$\sigma(t)=[q(t),\mathcal{A}(q(t),\dot{q}(t))]_{\al}$ we have

$$
\frac{D}{Dt}\sigma(t)=[q(t),\dot{\xi}(t)]_{\al}\hbox{ and }
\frac{D^{2}}{Dt^{2}}\sigma(t)=[q(t),\ddot{\xi}(t)-[\xi(t),\dot{\xi}(t)]]_{\al}.
$$ If we denote by $\xi_1(t)=\xi(t),$ $\xi_2(t)=\dot{\xi}(t),$
$\xi_{3}(t)=\ddot{\xi}(t)-[\xi(t),\xi_{2}(t)],...,\xi_{k}(t)=\dot{\xi}_{k-1}(t)-[\xi(t),\xi_{k-1}(t)],$ operating recursively
one obtains $$\frac{D^{k-1}}{Dt^{k-1}}\sigma(t)=[q(t),\xi_k(t)]_{\al},$$ where $\xi_k\in\mathfrak{g}$ (see \cite{GHR2011} for example).

Taking all these elements into account, the bundle
 isomorphism that generalizes $\alpha_{\mathcal{A}}^{(1)}$ \eqref{IsomorphismCont} to the HO case is given by $\alpha_{\mathcal{A}}^{(k)}:T^{(k)}Q/G\ra T^{(k)}(Q/G)\times_{Q/G} k\tilde{\mathfrak{g}}$:
\begin{equation}\label{IsomorphismContHO}
\alpha_{\mathcal{A}}^{(k)}([[q]_{q_0}^{(k)}]_{G})=\left(T^{(k)}\pi([q]_{q_0}^{(k)}),\sigma(0),\frac{D}{Dt}\Big{|}_{t=0}\sigma(t),\frac{D^2}{Dt^2}\Big{|}_{t=0}\sigma(t),\ldots,\frac{D^{k-1}}{Dt^{k-1}}\Big{|}_{t=0}\sigma(t)\right).
\end{equation}
Note that with some abuse of notation we are denoting the class $[[\gamma]_{q_0}^{(k)}]_{G}$  by $[[q]_{q_0}^{(k)}]_{G}$.
In the expression \eqref{IsomorphismContHO}, $\sigma(t):=[q(t),\mathcal{A}(q(t),\dot{q}(t))]_{\al},$ $q(t)$
is any curve representing $[q]_{q_0}^{(k)}\in T^{(k)}Q$ with $q(0)=q_0,$ and $k\tilde{\mathfrak{g}}:=\underbrace{\tilde{\mathfrak{g}}\times\tilde{\mathfrak{g}}\times\ldots\times\tilde{\mathfrak{g}}}_{k-copies}$ (see \cite{CeMaRa} and \cite{Leok1}).

 For further purposes, it will be useful to establish a local notation for the reduced variables. We follow \cite{GHR2011} in this respect:
\begin{equation}\label{LocalCoordHO}
\alpha_{\mathcal{A}}^{(k)}([[q]_{q_0}^{(k)}]_{G})=(p,\dot{p},\ddot{p},\ldots,p^{(k)},\sigma,\dot{\sigma},\ddot{\sigma},\ldots,\sigma^{(k-1)}),
\end{equation}
where $(p,\dot{p},\ddot{p},\ldots,p^{(k)})$ are local coordinates on
$T^{(k)}(Q/G)$ and the dots denote the time derivatives in a
local chart;
$\sigma,\dot{\sigma},\ddot{\sigma},\ldots,\sigma^{(k-1)}$ are
independent elements in $\tilde{\mathfrak{g}},$ where we employ the notation 
 $\sigma^{(l)}:=\frac{D^{l}}{Dt^{l}}\sigma(t)$ for the covariant derivative. 
 
We introduce the notation $M^{(k)}:=T^{(k)}(Q/G)\times_{Q/G}k\widetilde{\mathfrak{g}}\Flder\R$, $M:=M^{(1)}$, and $s^{(k,k-1)}$ to denote the elements $(p,\dot{p},\ddot{p},\ldots,p^{(k)},\sigma,\dot{\sigma},\ddot{\sigma},\ldots,\sigma^{(k-1)})\in M^{(k)}$,  $s^{(k-1)}_{\sigma}=(\sigma,\dot{\sigma},\ddot{\sigma},\ldots,\sigma^{(k-1)})\in k\widetilde{\mathfrak{g}}$ and $s_{p}^{(k)}=(p,\dot{p},\ddot{p},\ldots,p^{(k)})\in T^{(k)}(Q/G)$.

\subsection{Constrained Hamilton's principle}
We derive the constrained HO Lagrange-Poincar\'e equations using the variational principles studied in \cite{CeMaRa} and \cite{GHR2011} for first order systems and unconstrained HO systems respectively. 

The constraint $\phi^{\alpha}:T^{(k)}Q\to\mathbb{R}$ is said to be $G$-invariant if it is invariant under the $k$-order tangent lift of left translations, that is, $$\phi^{\alpha}\circ T^{(k)}\Phi_g([\gamma]_{q_0}^{(k)})=\phi^{(k)}([\gamma]_{q_0}^{(k)})$$where $\Phi_g$ is just the left translation of the Lie group $L_g:G\to G$, and $T^{(k)}\Phi_g$ as in \eqref{liftfunction}.

Let $L:T^{(k)}Q\ra\R,$ and $\phi^{\alpha}:T^{(k)}Q\ra\R$ be a $G$--invariant
HO Lagrangian and $G$--invariant HO (independent)
constraints, respectively, $\alpha=1,\ldots, m$. 
The $G$-invariance allows to induce the reduced Lagrangian $\mathcal{L}:T^{(k)}Q/G\Flder\R$ and reduced
constraints $\mathcal{\chi}^{\alpha}:T^{(k)}Q/G\Flder\R$. 

After fixing a connection $\mathcal{A}$ we can employ the isomorphism \eqref{IsomorphismContHO}. Then it is possible to write  the reduced
Lagrangian and the reduced constraints  $\mathcal{L}:M^{(k)}\ra\R$ and 
$\mathcal{\chi}^{\alpha}:M^{(k)}\ra\R$, and employ the local coordinates $s^{(k,k-1)}$ as in \eqref{LocalCoordHO}.

\begin{remark}
Note that if $Q$ is the Lie group $G$, the adjoint bundle is identified with
$\mathfrak{g}$ via the isomorphism
$\alpha^{k}_{\mathcal{A}}:T^{(k)}G/G\ra
k\tilde{\mathfrak{g}}\cong k\mathfrak{g}:$
$$\alpha_{\mathcal{A}}^{(k)}([[g]_{g_{0}}^{(k)}]_{G})=\left(g^{-1}(0)\dot{g}(0),\frac{d}{dt}\Big{|}_{t=0}\xi(t),\ldots,\frac{d^{k-1}}{dt^{k-1}}\Big{|}_{t=0}\xi(t)\right),$$
where $\xi(t)=g^{-1}(t)\dot{g}(t).$ If we choose $g_0=e,$ that is,
$[[g_0g]_{e}^{(k)}]_{G}=[[g]_{g_0}^{(k)}]_{G},$ we can define the reduced Lagrangian and
the reduced constraints given by $\mathcal{L}:k\mathfrak{g}\ra\R$ and
$\mathcal{\chi}^{\alpha}:k\mathfrak{g}\ra\R$ (see \cite{CeMaRa}).\hfill$\diamond$

\end{remark}

In order to establish the variational principle, we must derive the variations on $T^{(k)}(Q/G)\oplus k\tilde{\mathfrak{g}}$ induced by variations on $Q,$
i.e. $\displaystyle{\delta q(t)=\frac{d}{ds}\Big{|}_{s=0}q(t,s)}\in T_{q(t)}Q$ at each $t$. Consider  an arbitrary deformation $p(t,s)\oplus\sigma(t,s)$ with $p(t,0)\oplus\sigma(t,0)=p(t)\oplus\sigma(t)$, the corresponding covariant variation is 
$$\delta p(t)\oplus\delta\sigma(t):=\frac{\partial}{\partial s}\Big{|}_{s=0}p(t,s)\oplus\frac{D}{Ds}\Big{|}_{s=0}\sigma(t,s).$$

Since
$s^{(k)}_p=T^{(k)}\pi([q]_{q_0}^{(k)}):=[\pi\circ
q]_{p}^{(k)}$, the variations $\delta p$ of $p(t)$ are
arbitrary except at the extremes; that is, $\delta p^{(l)}(0)=\delta
p^{(l)}(T)=0$ for $l=1,\ldots,k-1;$ $t\in [0,T].$ Then, locally we have that \begin{equation}\label{VarP}
\delta s^{(k)}_{p}:=(\delta
p,\delta\dot{p},\ldots,\delta p^{(k)}).
\end{equation}

On the other hand,  the covariant variations
of $\sigma$ are given by,
\begin{align*}\delta\sigma(t)&=\frac{D}{Dt}[q(t),\mathcal{A}(q(t),\delta
q(t))]_{\al}+[q(t),\mathcal{B}(\delta q(t),\dot{q}(t))]_{\al}\\&+[q(t),[\mathcal{A}(q(t),\dot{q}(t)),\mathcal{A}(q(t),\delta
q(t))]]_{\al}.\end{align*} In general, (see \cite{GHR2011}) for $\widetilde{\mathcal{B}}$ the reduced curvature \eqref{reducecurvature}, it follows that \begin{equation}\label{Varsigma}
\delta\sigma^{(j)}(t)=\frac{D^i}{Dt^i}\delta\sigma(s,t)+\sum_{j=0}^{i-1}\frac{D^j}{Dt^{j}}[\widetilde{\mathcal{B}}(p)(\dot{p}(t),\delta p(t)),\sigma^{(i-1-j)}(t)].
\end{equation}
Note that in the expression above,  $[\cdot,\cdot]$  denotes the usual Lie algebra bracket in $\widetilde{\al}$.
 
Consider the augmented Lagrangian $\widetilde{\mathcal{L}}:M^{(k)}\times\R^m\Flder\R$ given by $$\widetilde{\mathcal{L}}(s^{(k,k-1)}, \lambda_{\alpha})=\mathcal{L}(s^{(k,k-1)})+\lambda_{\alpha}\mathcal{\chi}^{\alpha}(s^{(k,k-1)})$$ where $\lambda_{\alpha}=(\lambda_1,\ldots,\lambda_m)\in\R^{m}$.  

A curve on $\gamma(t)\in C^{\infty}(\R,M^{(k)}\times\R^m)$ is locally represented by 
$$\gamma(t)=(s^{(k,k-1)}(t),\lambda_{\alpha}(t)).$$
Constrained HO Lagrange-Poincar\'e equations are derived by considering the constrained variational principle for the action
$S:C^{\infty}(\R,M^{(k)}\times\R^m)\ra\R$
given by
$$\displaystyle{ S(\gamma)=\int_{0}^{T}\widetilde{\mathcal{L}}(\gamma(t))\,dt}$$ for variations $\delta s^{(k,k-1)}=(\delta p,\delta\dot{p},\ldots,\delta p^{(k)},\delta\sigma,\delta\dot{\sigma},\ldots,\delta\sigma^{(k-1)})$ such that
\begin{enumerate}
\item[(I)] $\delta p^{(j)}(0)=\delta p^{(j)}(T)=0$, for $j=1,\ldots,k-1$
\item[(II)] Variations $\delta\sigma^{(j)}$ are of the form \eqref{Varsigma}, for $j=0,\ldots,k-1$,
\item[(III)] $\delta\sigma=\frac{D}{Dt}\Xi+[\sigma,\Xi]+\widetilde{\mathcal{B}}(\dot{p},\delta
p)$ where $\Xi$ is an arbitrary curve in $\widetilde{\mathfrak{g}}$
with $\frac{D^j}{Dt^j}\Xi$ vanishing at the endpoints,
\end{enumerate}

\begin{theorem}\label{LPtheorem}
Let $L:T^{(k)}Q\ra\R$ be a $G$-invariant Lagrangian and
$\chi^{\alpha}:T^{(k)}Q\ra\R$  $G$-invariant constraints,
$\alpha=1,\ldots,m$. Consider the principal $G$-bundle $\pi:Q\ra
Q/G$ and let $\mathcal{A}$ be a  principal connection on $Q.$ Let
$\mathcal{L}:M^{(k)}\ra\R$
and
$\mathcal{\chi}^{\alpha}:M^{(k)}\ra\R$
be the reduced HO Lagrangian and the reduced HO
constraints, respectively, associated with $\mathcal{A}$.

The curve $\gamma(t)\in M^{(k)}$ satisfies $\delta S(\gamma)=0$ with respect to the variations $\delta s^{(k,k-1)}$ satisfying (I)-(III) if and only if $\gamma(t)$ is a solution of the
 constrained HO Lagrange-Poincar\'e equations given by
\begin{equation}\label{HOLagPoi}
\begin{split}
0&=\sum_{i=0}^{k}(-1)^{(i)}\frac{d^{(i)}}{dt^{(i)}}\left(\wlp+\lambda_{\alpha}\wphip\right)-\Big{\langle}\sum_{i=0}^{k-1}\left((-1)^{i}\frac{D^{(i)}}{Dt^{(i)}}\left(\wls+\lambda_{\alpha}\wphis\right)\right.\\
&-\left.\sum_{l=0}^{i-1}(-1)^{l}\mbox{ad}_{{\sigma}^{(i-1-l)}}^{*}\frac{D^{(l)}}{Dt^{(l)}}\left(\wls+\lambda_{\alpha}\wphis\right)\right);i_{\dot{p}}\widetilde{\mathcal{B}}\Big{\rangle},\\
0&=\left(\frac{D}{Dt}-\mbox{ad}_{\sigma}^{*}\right)\sum_{i=0}^{k-1}(-1)^{(i)}\frac{D^{(i)}}{Dt^{(i)}}\left(\wls+\lambda_{\alpha}\wphis\right),\\
0&=\mathcal{\chi}^{\alpha}(\gamma(t)),
\end{split}
\end{equation}
where $i_{\dot{p}}\widetilde{\mathcal{B}}$ denotes the {\rm Ad}$Q$-valued $1$-form on $Q/G$ defined by $i_{\dot{p}}\widetilde{B}(\cdot)=\widetilde{B}(\dot{p},\cdot)$, given in \eqref{reducecurvature}.
\end{theorem}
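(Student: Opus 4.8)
The plan is to compute the first variation $\delta S(\gamma)=\int_{0}^{T}\delta\widetilde{\mathcal{L}}\,dt$ directly, expand it by the chain rule over all reduced variables, move every ordinary and covariant time derivative off the variations by repeated integration by parts, and finally invoke the fundamental lemma of the calculus of variations for the independent free quantities $\delta p$, $\Xi$ and $\delta\lambda_{\alpha}$. By the chain rule,
\begin{equation*}
\delta\widetilde{\mathcal{L}}=\sum_{i=0}^{k}\frac{\partial\widetilde{\mathcal{L}}}{\partial p^{(i)}}\,\delta p^{(i)}+\sum_{i=0}^{k-1}\Big\langle\frac{\partial\widetilde{\mathcal{L}}}{\partial\sigma^{(i)}},\delta\sigma^{(i)}\Big\rangle+\chi^{\alpha}\,\delta\lambda_{\alpha},
\end{equation*}
where the fibre derivatives lie in $\widetilde{\mathfrak{g}}^{*}$ and are paired with the covariant variations $\delta\sigma^{(i)}\in\widetilde{\mathfrak{g}}$. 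Since $\delta\lambda_{\alpha}$ is free and enters only through $\chi^{\alpha}\,\delta\lambda_{\alpha}$, its arbitrariness forces the constraint equation $\chi^{\alpha}(\gamma(t))=0$ at once.

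For the base part I would integrate $\int_{0}^{T}\frac{\partial\widetilde{\mathcal{L}}}{\partial p^{(i)}}\delta p^{(i)}\,dt$ by parts $i$ times; the endpoint conditions (I) annihilate all boundary terms and leave $\sum_{i=0}^{k}(-1)^{i}\frac{d^{i}}{dt^{i}}\frac{\partial\widetilde{\mathcal{L}}}{\partial p^{(i)}}$ as the bulk coefficient of $\delta p$. For the fibre part I would substitute the prescribed form \eqref{Varsigma} of $\delta\sigma^{(i)}$, splitting it into a ``pure'' piece $\frac{D^{i}}{Dt^{i}}\delta\sigma$ and a ``curvature'' piece $\sum_{j=0}^{i-1}\frac{D^{j}}{Dt^{j}}[\widetilde{\mathcal{B}}(\dot p,\delta p),\sigma^{(i-1-j)}]$. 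The covariant integration by parts is legitimate because the induced covariant derivative on $\widetilde{\mathfrak{g}}^{*}$ obeys the Leibniz rule $\frac{d}{dt}\langle\tilde\sigma,\sigma\rangle=\langle\frac{D\tilde\sigma}{Dt},\sigma\rangle+\langle\tilde\sigma,\frac{D\sigma}{Dt}\rangle$, whence $\int_{0}^{T}\langle\mu,\frac{D^{i}v}{Dt^{i}}\rangle\,dt=(-1)^{i}\int_{0}^{T}\langle\frac{D^{i}\mu}{Dt^{i}},v\rangle\,dt$ modulo boundary terms; applied to the pure piece this collapses $\sum_{i}\langle\frac{\partial\widetilde{\mathcal{L}}}{\partial\sigma^{(i)}},\frac{D^{i}}{Dt^{i}}\delta\sigma\rangle$ into $\langle\Pi,\delta\sigma\rangle$ with $\Pi:=\sum_{i=0}^{k-1}(-1)^{i}\frac{D^{i}}{Dt^{i}}\frac{\partial\widetilde{\mathcal{L}}}{\partial\sigma^{(i)}}$, the object appearing in the second line of \eqref{HOLagPoi}.

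Next I would insert the reduced constrained variation (III), $\delta\sigma=\frac{D}{Dt}\Xi+[\sigma,\Xi]+\widetilde{\mathcal{B}}(\dot p,\delta p)$, into $\langle\Pi,\delta\sigma\rangle$. One covariant integration by parts turns $\langle\Pi,\frac{D}{Dt}\Xi\rangle$ into $-\langle\frac{D}{Dt}\Pi,\Xi\rangle$ (the boundary terms vanish because $\frac{D^{j}}{Dt^{j}}\Xi$ vanishes at the endpoints), while $\langle\Pi,[\sigma,\Xi]\rangle=\langle\ad^{*}_{\sigma}\Pi,\Xi\rangle$ by definition of $\ad^{*}$; the arbitrariness of $\Xi$ then forces $(\frac{D}{Dt}-\ad^{*}_{\sigma})\Pi=0$, which is exactly the second equation. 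The leftover piece $\langle\Pi,\widetilde{\mathcal{B}}(\dot p,\delta p)\rangle$ must be gathered with the curvature piece of \eqref{Varsigma}, for which I would integrate the inner $\frac{D^{j}}{Dt^{j}}$ by parts and use $\langle\mu,[\beta,\nu]\rangle=-\langle\ad^{*}_{\nu}\mu,\beta\rangle$ (Lie-bracket antisymmetry plus the definition of $\ad^{*}$) to factor out the common $\widetilde{\mathcal{B}}(\dot p,\delta p)=i_{\dot p}\widetilde{\mathcal{B}}(\delta p)$. Adding this fibre contribution to the base coefficient of $\delta p$ and setting the total to zero yields precisely the first equation of \eqref{HOLagPoi}; the converse implication follows by reading the computation backwards, giving the stated equivalence.

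The step I expect to be the main obstacle is the bookkeeping of the curvature contributions: correctly tracking signs and indices while integrating the nested covariant derivatives in \eqref{Varsigma} by parts, re-indexing the double sum $\sum_{i}\sum_{j}$, and fusing it with the single $\widetilde{\mathcal{B}}(\dot p,\delta p)$ term from (III) so that the two coalesce into the single pairing $\langle\,\cdot\,;i_{\dot p}\widetilde{\mathcal{B}}\rangle$ displayed in the theorem. One must also check meticulously that every boundary term produced at each order of covariant integration by parts vanishes under the endpoint conditions on $\delta p^{(j)}$ and $\frac{D^{j}}{Dt^{j}}\Xi$.
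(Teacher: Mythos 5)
Your proposal is correct and is essentially the same argument the paper relies on: the paper's proof consists of citing Theorem 4.1 of \cite{GHR2011} with the Lagrangian there replaced by the augmented Lagrangian $\widetilde{\mathcal{L}}=\mathcal{L}+\lambda_{\alpha}\chi^{\alpha}$, and your computation---chain rule, free $\delta\lambda_{\alpha}$ forcing $\chi^{\alpha}=0$, iterated ordinary and covariant integration by parts against the endpoint conditions (I)--(III), the fundamental lemma applied to the independent $\delta p$ and $\Xi$, and the identity $\langle\mu,[\beta,\nu]\rangle=-\langle\ad^{*}_{\nu}\mu,\beta\rangle$ to fuse the curvature contributions of \eqref{Varsigma} with the $\widetilde{\mathcal{B}}(\dot p,\delta p)$ term of (III) into the single pairing with $i_{\dot p}\widetilde{\mathcal{B}}$---is precisely that proof written out for $\widetilde{\mathcal{L}}$. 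The step you flag as the main obstacle (the sign and index bookkeeping producing the $\ad^{*}_{\sigma^{(i-1-l)}}$ terms) is handled by exactly the strategy you describe, so there is no gap.
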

\begin{proof}
The proof follows in a straightforward way by replacing the Lagrangian in the proof of Theorem $4.1$ of \cite{GHR2011} by the extended Lagrangian $\widetilde{\mathcal{L}}$.
\end{proof}
\begin{remark}
If $Q$ is a Lie group $G$ then the  constrained HO Lagrange-Poincar\'e equations \eqref{HOLagPoi} become the  constrained HO Euler-Poincar\'e equations, 	i.e: 
{\rm\begin{equation}\label{HOEuPoi}
\left\{
\begin{array}{l}
\displaystyle 0=\left(\frac{d}{dt}-\mbox{ad}_{\sigma}^{*}\right)\sum_{l=0}^{k-1}(-1)^{l}\frac{d^l}{dt^l}\left(\frac{\partial \mathcal{L}}{\partial\sigma}+\lambda_{\alpha}\frac{\partial\mathcal{\chi}^{\alpha}}{\partial\sigma}\right),\\
\displaystyle 0=\chi^{\alpha}(\gamma(t)),
\end{array}
\right.
\end{equation}}
where $\gamma(t)=(s^{(k-1)}_{\sigma})(t),\lambda_{\alpha}(t))\in C^{\infty}(\R,k\mathfrak{g}\times\R^m)$ (see \cite{CMdDJGM} and \cite{GHMRV10}).\hfill$\diamond$ \end{remark}
\begin{remark}\label{remarklocal2}

In the examples, we will be interested in the case $k=2$. In that case, the second-order Lagrange-Poincar\'e equations are locally written as:
\begin{align*}
\frac{\partial\widetilde{\mathcal{L}}}{\partial p^{s}}-\frac{d}{dt}\left(\frac{\partial\widetilde{\mathcal{L}}}{\partial\dot{p}^{s}}\right)+\frac{d^{2}}{dt^2}\left(\frac{\partial\widetilde{\mathcal{L}}}{\partial\ddot{p}^{s}}\right)&=\left(\frac{d}{dt}\frac{\partial\widetilde{\mathcal{L}}}{\partial\dot{\sigma}^{a}}-\frac{\partial\widetilde{\mathcal{L}}}{\partial\sigma^{a}}\right)\left(B_{ls}^{a}\dot{p}^{l}+C_{db}^{a}A_s^{b}\sigma^{d}\right),\\
\frac{d^{2}}{dt^2}\left(\frac{\partial\widetilde{\mathcal{L}}}{\partial\dot{\sigma}^{b}}\right)-\frac{d}{dt}\left(\frac{\partial\widetilde{\mathcal{L}}}{\partial\sigma^{b}}\right)&=\left(\frac{d}{dt}\frac{\partial\widetilde{\mathcal{L}}}{\partial\dot{\sigma}^{a}}-\frac{\partial\widetilde{\mathcal{L}}}{\partial\sigma^{a}}\right)(C_{db}^{a}\sigma^{d}-C_{db}^{a}A_{s}^{d}\dot{p}^{s}),\\
\chi^{\alpha}(s^{(2,1)})&=0,
\end{align*} 
where $\widetilde{\mathcal{L}}(s^{(2,1)},\lambda_{\alpha})=\mathcal{L}(s^{(2,1)})+\lambda_{\alpha}\chi^{\alpha}(s^{(2,1)})$.

Note that these equations are the second-order constrained version of the local expression of the Lagrange-Poincar\'e equations derived by Marsden and Scheurle in \cite{MaSc}. \hfill$\diamond$ 
\end{remark}

\section{Discrete constrained higher-order Lagrange-Poincar\'e equations}

\subsection{Discrete mechanics and variational integrators}\label{DMVI}

Variational integrators are a class of geometric integrators which are determined by a discretization of a variational principle. As a consequence,  some of the main geometric properties of  continuous
system, such as symplecticity and momentum conservation, are present in these numerical methods (see
\cite{Hair},\cite{MaWe} and \cite{Mose} and references therein).
In the following we will summarize the main features of this type
of geometric
 integrator. 
 
A {\it discrete Lagrangian} is a map
$L_d:Q\times Q\rightarrow\R$, which may be considered as
an approximation of the action integral defined by a continuous  Lagrangian $L\colon TQ\to
\R$, that is
\begin{equation}\label{aprox}
L_d(q_0, q_1)\approx \int^{t_0+h}_{t_0} L(q(t), \dot{q}(t))\; dt,
\end{equation}
where $q(t)$ is a solution of the Euler-Lagrange equations \eqref{qwer} joining $q(t_0)=q_0$ and $q(t_0+h)=q_1$ for small enough $h>0$, where $h$ is viewed as the step size of the integrator.

 Define the {\it action sum} $S_d\colon Q^{N+1}\to
\R$  corresponding to the Lagrangian $L_d$ by
\begin{equation}\label{DiscAction}
{S_d}=\sum_{n=0}^{N-1}  L_d(q_{n}, q_{n+1}),
\end{equation}
where $q_n\in Q$ for $0\leq n\leq N$, $N$ is the number of discretization steps. The discrete variational
principle   states that the solutions of the discrete system
determined by $L_d$ must extremize the action sum given fixed
endpoints $q_0$ and $q_N$. By extremizing ${S_d}$ over $q_n$,
$1\leq n\leq N-1$, it is possible to derive the system of difference equations
\begin{equation}\label{discreteeq}
 D_1L_d( q_n, q_{n+1})+D_2L_d( q_{n-1}, q_{n})=0.
\end{equation}
These  equations are usually called the  {\it discrete
Euler--Lagrange equations}. If the
matrix $D_{12}L_d(q_n, q_{n+1})$ is regular, it is possible to
define  from (\ref{discreteeq}) a (local) discrete flow map $ F_{L_d}\colon Q\times
Q\to  Q\times Q$, by $F_{L_d}(q_{n-1}, q_n)=(q_n,
q_{n+1})$. We will refer to the $F_{L_d}$ flow, and also (with some abuse of notation) to the equations \eqref{discreteeq}, as a {\it variational integrator.}  Using the discrete Legendre transformations $\F^{\pm}L_d:Q\times Q\Flder T^*Q$ (which we assume regular), one can construct the discrete Hamiltonian flow $\tilde F_{L_d}:T^*Q\Flder T^*Q$ out of the discrete Lagrangian one, namely $\tilde F_{L_d}=\F^{\pm}L_d\circ F_{L_d}\circ(\F^{\pm}L_d)^{-1}$, see \cite{MaWe}.

The choice of discrete Lagrangian \eqref{aprox} is crucial in the discrete variational procedure, since it determines the order of local truncation error of the discrete flows with respect to the continuous ones. The optimal approximation is given by the so-called \textit{exact discrete Lagrangian} \cite{Hair,MaWe}, say
\[
L_d^E(q_0, q_1)=\int^{t_0+h}_{t_0} L(q(t), \dot{q}(t))\; dt,
\]
which provides the exact continuous flow in one time step $h$ via the discrete Euler-Lagrange equations \eqref{discreteeq}. Nevertheless, the choice of $L_d^E$ is not practical since it involves the analytic solution of the continuous Euler-Lagrange equations; thus we need to take approximations. It was proven in \cite{MaWe} and \cite{PatrickCuell} that, if $||L_d^E-L_d||\sim O(h^{r+1})$, for $r\in\N$, then the local truncation error of the variational integrator is of the same order, i.e. $||\tilde F_{L_d}(q_0,p_0)-(q(h),p(h))||\sim O(h^{r+1})$, where $(q,p)$ are the coordinates of $T^*Q$, we define by $(q(t),p(t))$ the continuous flow and we set $(q(0),p(0))=(q_0,p_0)$ (equivalent results can be established at a Lagrangian level). Furthermore, the symplecticity of $\tilde F_{L_d}$ ensures its stability in the long-term performance when $h\Flder 0$, as proven in \cite{Hair} (Backward Error Analysis). In other words, if $H:T^*Q\Flder\R$ determines the energy of the system, then for a $r$-th order consistent $\tilde F_{L_d}$, we have that $||H(\tilde F^N_{L_d}(q_0,p_0))-H(q_0,p_0)||\sim O(h^{r+1}).$

Since $L_d^E$ is normally not available, what one can pick is the order of the approximation of $L_d$. This is done by the interpolation of the continuous curves $q(t)$ and $\dot q(t)$ in the right hand side of \eqref{aprox}. First order interpolations lead to the well-known midpoint rule, leapfrog, RATTLE and St\"ormer-Verlet methods \cite{MaWe}. High-order interpolations lead to higher-order approximations of $L_d^E$ and consequently to higher-order variational integrators, see for instance \cite{Cedric2, Sina2} (note that with {\it high-order} we refer here to the local truncation error of the numerical methods).

In the following sections we are going to concentrate on discrete HO problems with symmetry, which are the main topic of this work. Our focus is on the discretization procedure and the mathematical tools that it involves, whereas the numerical behavior is planned to be explored in further works. However, regarding the local truncation error and stability (which will be still ensured thanks to the symplecticity of the HO variational integrators), discussed in the last paragraphs, the construction of $L_d^E$ for HO systems has been developed in \cite{CoFeMdD}. In this reference the reader can find some numerical tests and more details  on numerical aspects.


\subsection{The discrete connection}\label{Discretization}

The discretization of the reduced HO tangent bundle $T^{(k)}Q/G\cong M^{(k)}$ is based on the decomposition of the space $(Q\times Q)/G$ by means of the so-called discrete connection \cite{Leok1} (see also \cite{FZ2}):

\begin{equation}\label{DiscConnec}
\dcone:Q\times Q\Flder G,
\end{equation}
which is defined to account for a reasonable discretization of the properties of the continuous connection $\mathcal{A}$ and, moreover, is $G$-equivariant (see \cite{FZ2,Leok1, MMM} for more details). 

Important properties that characterize the discrete connection are \cite{Leok1}:
\begin{enumerate}
\item $\dcone(q_0,g\,q_0)=g$,
\item $\dcone(g\,q_0,h\,q_1)=h\dcone(q_0,q_1)g^{-1},$
\item Consider a local trivialization of the principal bundle $\pi:Q\to Q/G$; namely, for any open neighborhood $V\subset Q$ we have
\begin{equation}\label{TrivialChart}
V\cong U\times G,
\end{equation}
where $U\subset Q/G$. In other words, for any $U\subset Q/G$, $\pi^{-1}(U)\cong U\times G$. In such a case, we have: 
\begin{equation}\label{DiscConeTrivialChart}
\dcone((p_0,g_0),(p_1,g_1))=g_1\dcone((p_0,e),(p_1,e))g_0^{-1},
\end{equation}
where locally $\pi(q_0)=\pi((p_0,g_0))=p_0.$ This defines the local expression of ·$\dcone$, say $\dcone((p_0,e),(p_1,e))=A(p_0,p_1)\in G$, which according to \eqref{DiscConeTrivialChart} leads to
\begin{equation}\label{LocalDiscConne}
\dcone((p_0,g_0),(p_1,g_1))=g_1\,A(p_0,p_1)\,g_0^{-1}.
\end{equation}
In particular, if $Q=G$, this leads to $\dcone((e,e),(e,e))=e$ and consequently $\dcone(g_0,g_1)=g_1g^{-1}_0.$
\end{enumerate}
\begin{remark}
Sometimes (see \cite{MMM}) the discrete connection is defined as the application $\mathcal{A}_{d}:Q\times Q\to G$ satisfying the properties $(1)$ and $(2)$ listed above.\hfill$\diamond$
\end{remark}
In particular, given a discrete connection $\dcone$ the following isomorphism  between bundles is well-defined (see \cite{Leok1} for the proof):
\begin{equation}\label{IsomorphDisc}
\begin{split}
\alpha_{_{\dcone}}^{(1)}: (Q\times Q)/G &\,\,\Flder\,\,\, \lp (Q/G)\times(Q/G)\rp\oplus\widetilde G,\\
[q_0,q_1]_G&\,\,\mapsto (\pi(q_0),\pi(q_1))\oplus[q_0,\dcone(q_0,q_1)]_G,
\end{split}
\end{equation}
$[q_0,\dcone(q_0,q_1)]_G\in \widetilde G$, where we denote $\widetilde G:=(Q\times G)/G$  in analogy with the adjoint bundle $\tilde\al$.  We note that \eqref{IsomorphDisc} is the discrete counterpart of the isomorphism \eqref{IsomorphismCont}.
\begin{remark}
In the case $Q=G$, the isomorphism \eqref{IsomorphDisc} is given by $\dcone(g_0,g_1)=g_0^{-1}g_1$ in view of property (3), which leads to the usual Euler-Poincar\'e discrete reduction as in \cite{MPS}. \hfill$\diamond$ 
\end{remark}

We consider the following extension of \eqref{IsomorphDisc} in the case of HO tangent bundles, which is local for non-trivial bundles (see \cite{FZ2,Leok1} for more details):
\begin{equation}\label{HOSplit}
\begin{split}
\alpha_{\dcone}^{(k)}:\lc Q^{(k+1)}\rc/G&\,\,\,\longrightarrow (Q/G)^{(k+1)}\times_{_{Q/G}}\widetilde G^{(k)},\\
	[q_0,...,q_k]_G&\,\,\,\longmapsto (\pi(q_0),...,\pi(q_k))\times_{Q/G}\bigoplus_{n=0}^{k-1}[q_0,\dcone(q_n,q_{n+1})]_G
\end{split}
\end{equation}
where $Q^{(k+1)}$ denotes the Cartesian product of $(k+1)$-copies of $Q$, $(Q/G)^{(k+1)}$ denotes the Cartesian product of $(k+1)$-copies of $(Q/G)$ and $\widetilde G^{(k)}$ denotes the sum of $k$-copies of $\widetilde G$. Consequently, we consider $H^{(k+1,k)}:=(Q/G)^{(k+1)}\times_{Q/G}\,\widetilde G^{(k)}$ as the discretization of the space $M^{(k)}$, which is natural according to \cite{Benito,Leok1,MaWe}.

\subsection{Variational discretization of constrained HO Lagrange-Poincar\'e equations}
In the following, we aim to derive the variational discrete flow obtained from a discretizacion of $\mathcal{L}$ and $\chi^{\alpha}$. Therefore, we shall work in local coordinates, particularly in the local trivialization of the principal bundle \eqref{TrivialChart}. 

The first task consists of obtaining the variational discretization of equations \eqref{HOLagPoi}. For this, we must fix the discrete connection \eqref{DiscConnec} and the discrete isomorphism \eqref{HOSplit}. Next, we can induce through $\mathcal{A}_{d}$ and $\alpha_{\mathcal{A}_{d}}^{(k)}$ the discrete reduced HO Lagrangian and the discrete reduced HO constraints,
\begin{equation}\label{HOcase}
\mathcal{L}_d: H^{(k+1,k)}\Flder\R,\,\,\, \mbox{and}\,\,\,\chi_{d}^{\alpha}: H^{(k+1,k)}\Flder\R,
\end{equation}
for $\alpha=1,...,m$. 

For a clear exposition, first we develop the first-order case, i.e. $k=1$,  in the next subsection,  where the main objects employed in the HO  case shall be introduced.

\subsubsection{Discrete constrained Lagrange-Poincar\'e equations}

We shall consider the discrete reduced Lagrangian and discrete reduced constraints:
\begin{equation}\label{DiscreteSettingFO}
\mathcal{L}_d: H^{(2,1)} \Flder\R\quad\quad\, \mbox{and}\quad\quad\,\chi^{\alpha}_{d}: H^{(2,1)}\Flder\R.
\end{equation}
where $H^{(2,1)}=\lp(Q/G)\times (Q/G)\rp\times_{_{Q/G}}\,\widetilde G$ according to the notation introduced above. Moreover, we will employ the trivialization \eqref{TrivialChart} to fix a local representation of $\widetilde G$, and consequently of $[q_0,q_1]_G\in (Q\times Q)/G.$  Indeed, employing the discrete connection $\dcone$ and the isomorphism \eqref{IsomorphDisc}, we can make the following identification
\[
\lp\pi^{-1}(U)\times \pi^{-1}(U)\rp/G\cong \lp(U\times G)\times (U\times G)\rp/G\cong U\times U\times G.
\]
Moreover, one can prove that the map 
\begin{align}
\lp(U\times G)\times (U\times G)\rp/G &\,\,\,\longrightarrow U\times U\times G,\label{Bijecton}\\
[(p_0,g_0),(p_1,g_1)]&\,\,\,\longmapsto (p_0,p_1,\dcone((p_0,e),(p_1,g_0^{-1}g_1)))\nonumber\\&\qquad=(p_0,p_1,g_0^{-1}g_1A(p_0,p_1)),\nonumber
\end{align}
is a bijection (see \cite{MMM}), where 
\begin{equation}\label{ConneLocalG}
A:U\times U\Flder G
\end{equation}
is the local representation of the discrete connection as  established in \eqref{LocalDiscConne}. Therefore, in this  trivialization  we can define the local coordinates 
\begin{equation}\label{a_n}
a_n:=[q_n,q_{n+1}]_G=(p_n,p_{n+1},g_n^{-1}g_{n+1}A(p_n,p_{n+1})),
\end{equation} where $n$ is 0 or a positive integer. 

\begin{lemma}\label{lemma} The variations for an element  $a_n \in U\times U\times G$ defined in \eqref{a_n} are determined by 
\begin{equation}\label{VariationsW}
\begin{split}
\delta a_n:=\delta[q_n,q_{n+1}]_G=&(\delta p_n,\delta p_{n+1},-\eta_nW_nA(p_n,p_{n+1})+W_n\eta_{n+1}\,A(p_n,p_{n+1})\\
&+W_n\bra D_1A(p_n,p_{n+1}),\delta p_n\ket+W_n\bra D_2A(p_n,p_{n+1}),\delta p_{n+1}\ket),
\end{split}
\end{equation} where  $W_n=g_n^{-1}g_{n+1}\in G$ and 
$\delta g_n:=g_n\,\eta_n$, with $\eta_n\in\al$.
\end{lemma}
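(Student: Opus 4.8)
The plan is to compute the variation directly in the local coordinates \eqref{a_n}, differentiating each of the three components of $a_n=(p_n,p_{n+1},g_n^{-1}g_{n+1}A(p_n,p_{n+1}))$ along a one-parameter family $s\mapsto(p_n(s),p_{n+1}(s),g_n(s),g_{n+1}(s))$ whose $s$-derivative at $s=0$ encodes the prescribed variations $\delta p_n$, $\delta p_{n+1}$ and $\delta g_n=g_n\eta_n$, $\delta g_{n+1}=g_{n+1}\eta_{n+1}$, with $\eta_n,\eta_{n+1}\in\al$. The first two components are immediate, since they are the base-point projections $\pi(q_n),\pi(q_{n+1})$ to $Q/G$; their variations are $\delta p_n$ and $\delta p_{n+1}$ by definition. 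All of the work is in the third, group-valued, component $g_n^{-1}g_{n+1}A(p_n,p_{n+1})$.

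First I would establish the variation of the inverse, which is the only place where the group structure enters nontrivially. Writing $h=g_n^{-1}$ and differentiating the identity $hg_n=e$ gives $(\delta h)g_n+h(\delta g_n)=0$, whence $\delta(g_n^{-1})=-g_n^{-1}(\delta g_n)g_n^{-1}=-\eta_n g_n^{-1}$, using $\delta g_n=g_n\eta_n$. This is the step that produces the minus sign and forces $\eta_n$ to appear on the left in the final formula.

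Next I would apply the Leibniz rule to the triple product $g_n^{-1}g_{n+1}A(p_n,p_{n+1})$. Abbreviating $W_n=g_n^{-1}g_{n+1}$ and substituting $\delta(g_n^{-1})=-\eta_n g_n^{-1}$ together with $\delta g_{n+1}=g_{n+1}\eta_{n+1}$, the variations of the first two factors regroup as $-\eta_n W_n A(p_n,p_{n+1})+W_n\eta_{n+1}A(p_n,p_{n+1})$, once the surviving $g_n^{-1}g_{n+1}$ are collected into $W_n$. The third factor contributes $W_n\,\delta A(p_n,p_{n+1})$, and the chain rule gives $\delta A(p_n,p_{n+1})=\bra D_1A(p_n,p_{n+1}),\delta p_n\ket+\bra D_2A(p_n,p_{n+1}),\delta p_{n+1}\ket$, where $D_1,D_2$ denote differentiation in the first and second base arguments of $A$. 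Summing the three contributions reproduces exactly the third component of \eqref{VariationsW}.

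The computation is essentially routine; the only genuine subtlety, and hence the point I would treat most carefully, is the interpretation of the mixed Lie-algebra/group products such as $\eta_n W_nA(p_n,p_{n+1})$. These are tangent vectors at $W_nA(p_n,p_{n+1})\in G$ obtained via the appropriate tangent lifts of left and right translations (equivalently, literal matrix products in the matrix-group case); keeping track of which translation acts on which factor is what guarantees that $\eta_n$ lands to the left of $W_nA$ while $\eta_{n+1}$ sits between $W_n$ and $A$. Once this bookkeeping is fixed, no regularity hypotheses beyond smoothness of the local connection map $A$ are required, and the formula \eqref{VariationsW} follows.
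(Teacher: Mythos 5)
Your proof is correct and follows essentially the same route as the paper: the paper likewise applies the Leibniz rule to the group-valued component $g_n^{-1}g_{n+1}A(p_n,p_{n+1})$, using $\delta(g_n^{-1})=-g_n^{-1}\,\delta g_n\,g_n^{-1}$ together with the chain rule $\delta A(p_n,p_{n+1})=\bra D_1A(p_n,p_{n+1}),\delta p_n\ket+\bra D_2A(p_n,p_{n+1}),\delta p_{n+1}\ket$, and then rewrites everything in terms of $W_n=g_n^{-1}g_{n+1}$ and $\delta g_n=g_n\eta_n$. The only cosmetic difference is that you simplify $\delta(g_n^{-1})=-\eta_n g_n^{-1}$ before expanding the product, whereas the paper carries $-g_n^{-1}\delta g_n\,g_n^{-1}g_{n+1}A(p_n,p_{n+1})$ along and substitutes $\eta_n$ at the final step.
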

\begin{proof}
We observe that 
\begin{align}
&\delta[q_n,q_{n+1}]_G=(\delta p_n,\delta p_{n+1},-g_n^{-1}\delta g_n\,g_n^{-1}g_{n+1}A(p_n,p_{n+1})+g_n^{-1}\delta g_{n+1}\,A(p_n,p_{n+1})\nonumber\\
&\quad\quad\quad\quad\quad\quad+g_n^{-1}g_{n+1}\delta A(p_n,p_{n+1}))\label{Variations}\\
&\quad\quad\quad\quad\quad=(\delta p_n,\delta p_{n+1},-g_n^{-1}\delta g_n\,g_s^{-1}g_{n+1}A(p_n,p_{n+1})+g_n^{-1}\delta g_{n+1}\,A(p_n,p_{n+1})\nonumber\\
&\quad\quad\quad\quad\quad\quad+g_{n}^{-1}g_{n+1}\bra D_1A(p_n,p_{n+1}),\delta p_n\ket+g_n^{-1}g_{n+1}\bra D_2A(p_n,p_{n+1}),\delta p_{n+1}\ket)\nonumber
\end{align} where, for $i=1,2$; $D_iA(p_n,p_{n+1})$ is a one form on  $T^*_{p_j}U$ taking values on $T_{A(p_n,p_{n+1})}G$ for $j=n$ if $i=1$ and $j=n+1$ if $i=2$, according to \eqref{ConneLocalG}. Using $W_n=g_n^{-1}g_{n+1}\in G$ and 
$\delta g_n:=g_n\,\eta_n$, with $\eta_n\in\al$, then \eqref{Variations} can be rewritten as \eqref{VariationsW}.
\end{proof}
Given the grid $\{t_{n}=nh\mid n=0,\ldots,N\}$,
with $Nh=T$, define the discrete path space
$\mathcal{C}_{d}(U\times U\times G):=\{\gamma_{d}:\{t_{n}\}_{n=0}^{N}\ra U\times U\times G\}.$ This discrete path space is isomorphic to the
smooth product manifold which consists of $N+1$ copies of $U\times U\times G$ (which  is locally isomorphic to $N+1$ copies of $(\lp Q/G\times Q/G\rp\times_{Q/G}\times \widetilde G)$). The
discrete trajectory $\gamma_{d}\in\mathcal{C}_{d}(U\times U\times G)$ will be identified
with its image, i.e. $\gamma_{d}(t_n)=\{a_{n}\}_{n=0}^{N}$ where
$a_{n}=(p_n,p_{n+1},g_n^{-1}g_{n+1}A(p_n,p_{n+1}))$. Let us consider the reduced discrete Lagrangian $\mathcal{L}_d$ in \eqref{DiscreteSettingFO}. Define the discrete action sum, $\mathcal{S}_{d}:\mathcal{C}_{d}(U\times U\times G)\to\mathbb{R}$, by 
\begin{equation}\label{DiscAction}
\mathcal{S}_d(\gamma_{d})=\sum_{n=0}^{N-1}\mathcal{L}_d([q_n,q_{n+1}]_G)=\sum_{s=0}^{N-1}\mathcal{L}_d(p_n,p_{n+1},g_n^{-1}g_{n+1}A(p_n,p_{n+1}))
\end{equation}
where the  equality is established at a local level. From now on, we use the notation $A_n:=A(p_n,p_{n+1})$ and  $\displaystyle{\mathcal{S}_d(\gamma_{d})=\sum_{s=0}^{N-1}\mathcal{L}_d(a_n)}$.

The \textit{discrete constrained variational problem} associated with \eqref{DiscreteSettingFO}, consists of finding a discrete path $\gamma_d\in\mathcal{C}_{d}(U\times U\times G)$, given fixed boundary conditions, which extremizes the discrete action sum \eqref{DiscAction} subject to the discrete constraints $\chi^{\alpha}_{d}$. This constrained optimization problem is equivalent to studying the (unconstrained) optimization problem for the \textit{augmented Lagrangian} $\widetilde{\mathcal{L}}_{d}:H^{(2,1)}\times\R^{m}\Flder\R$ given by 
\begin{equation}\label{AugLag}
\widetilde{\mathcal{L}}_d([q_n,q_{n+1}]_G,\lambda_{\alpha}^{n})=\mathcal{L}_d([q_n,q_{n+1}]_G)+\lambda_{\alpha}^n\chi_d^{\alpha}([q_n,q_{n+1}]_G)
\end{equation}
 where 
$\lambda^n_{\alpha}=(\lambda_1^n,...,\lambda_m^n)\in\R^m$ are Lagrange multipliers. The associated action sum is given by 
\begin{equation}\label{AugmentedActionSum}
\mathcal{S}_d(\tilde\gamma_{d})=\sum_{n=0}^{N-1}\widetilde{\mathcal{L}}_d([q_n,q_{n+1}]_G,\lambda_{\alpha}^n)=\sum_{n=0}^{N-1}\widetilde{\mathcal{L}}_d(p_n,p_{n+1},g_n^{-1}g_{n+1}A(p_n,p_{n+1}),\lambda_{\alpha}^n),
\end{equation}
where again the equality is given at a local level and $\tilde\gamma_d\in\mathcal{C}_{d}(U\times U\times G\times\R^m):=\{\tilde\gamma_{d}:\{t_{n}\}_{n=0}^{N}\ra U\times U\times G\times \R^m\}$ is the discrete augmented path space. We establish the  result in the following theorem, where the discrete constrained Lagrange-Poincar\'e equations are obtained.

\begin{theorem}\label{TheoremDisc1}
A discrete sequence $\lc a_n,\lambda^n_{\alpha}\rc_{n=0}^N\in \mathcal{C}_{d}(U\times U\times G\times\R^m)$ is an extremum of the action sum \eqref{AugmentedActionSum}, with respect to  variations $\delta[q_n,q_{n+1}]_G$ set in \eqref{VariationsW} and endpoint conditions $\delta q_0=\delta q_N=0$ where $q_j=(p_j,g_j)$ (while the Lagrange multipliers are free), if  it is a solution of the discrete constrained Lagrange-Poincar\'e equations \eqref{DiscEqofMotionSimples}.
\end{theorem}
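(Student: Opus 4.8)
The plan is to derive the discrete constrained Lagrange-Poincar\'e equations by applying the discrete Hamilton principle directly to the augmented action sum \eqref{AugmentedActionSum}, following the same strategy that produced the discrete Euler-Lagrange equations \eqref{discreteeq}, but now accounting for the split variations \eqref{VariationsW} coming from the discrete connection. First I would impose $\delta\mathcal{S}_d(\tilde\gamma_d)=0$ and expand it as
\begin{equation*}
\delta\mathcal{S}_d(\tilde\gamma_d)=\sum_{n=0}^{N-1}\Big\langle\frac{\partial\widetilde{\mathcal{L}}_d}{\partial a_n},\delta a_n\Big\rangle+\sum_{n=0}^{N-1}\chi_d^{\alpha}(a_n)\,\delta\lambda_{\alpha}^n.
\end{equation*}
The variation with respect to the free multipliers $\lambda_\alpha^n$ immediately returns the discrete constraints $\chi_d^\alpha(a_n)=0$, which will be the last of the equations \eqref{DiscEqofMotionSimples}. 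The substantive work is in expanding the first sum using Lemma \ref{lemma}: each $\delta a_n$ has three slots $(\delta p_n,\delta p_{n+1},\delta W_n\text{-term})$, and the group slot contains the four contributions $-\eta_n W_n A_n$, $W_n\eta_{n+1}A_n$, and the two $D_iA$ terms paired with $\delta p_n,\delta p_{n+1}$.

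Next I would collect terms by the independent infinitesimal quantities $\delta p_n$ and $\eta_n$, performing a discrete summation by parts (an index shift $n\mapsto n-1$) so that, at each interior node, the coefficient multiplying $\delta p_n$ and the coefficient multiplying $\eta_n$ must each vanish. The $\eta_n$-collection yields the discrete analogue of the first (vertical, $\sigma$-direction) equation of \eqref{HOLagPoi}, where the products $\eta_n W_n A_n$ and $W_n\eta_{n+1}A_n$ — after pairing against $\partial\widetilde{\mathcal{L}}_d/\partial(\text{group slot})$ via the relation \eqref{relation-cltl} between tangent and cotangent lifts — generate the discrete momenta at consecutive nodes together with the adjoint/coadjoint action encoded by conjugation by $W_n$. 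The $\delta p_n$-collection, gathering the genuine base-variation terms from slots one and two plus the two $D_iA$ curvature-type terms, produces the discrete analogue of the second (horizontal, $p$-direction) equation, with $D_1A_n$ and $D_2A_{n-1}$ playing the role of the reduced curvature coupling $i_{\dot p}\widetilde{\mathcal{B}}$. The endpoint conditions $\delta q_0=\delta q_N=0$, equivalently $\delta p_0=\delta p_N=0$ and $\eta_0=\eta_N=0$, kill the boundary terms arising from the summation by parts, so only the interior node conditions survive.

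The main obstacle I anticipate is bookkeeping the noncommutativity in the group slot correctly: the terms $\eta_n W_n A_n$ and $W_n\eta_{n+1}A_n$ live in $T_{W_nA_n}G$ and must be translated back to the Lie algebra by the appropriate left or right cotangent lift before the coefficients of $\eta_n$ at a fixed node can be combined across the two summands $\mathcal{L}_d(a_{n-1})$ and $\mathcal{L}_d(a_n)$ in which $\eta_n$ appears. Getting the Ad/coadjoint conjugations and the left/right translations consistent — so that the final equation is intrinsically a statement in $\widetilde G$ (respectively $\widetilde{\mathfrak g}^*$) rather than a coordinate artifact — is the delicate step; the rest is a routine, if lengthy, rearrangement. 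Having isolated the three families of conditions (vertical, horizontal, constraint) at every interior node, I would then simply record them as the system \eqref{DiscEqofMotionSimples}, which completes the equivalence asserted in the theorem, since every step in the variational computation is reversible.
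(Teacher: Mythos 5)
Your proposal follows essentially the same route as the paper's proof: discrete Hamilton's principle applied with the variations of Lemma \ref{lemma}, discrete summation by parts (index shift) to isolate the coefficients of $\delta p_n$ and $\eta_n$ at interior nodes, translation of the group-slot derivatives back to $\mathfrak{g}^*$ via left/right cotangent lifts to produce the discrete momenta and the $\mathrm{Ad}^*_{W_{n-1}}$ relation, with the endpoint conditions killing the boundary terms and the free multipliers returning the constraints. The only difference is organizational — you vary the augmented action in a single pass, while the paper first treats the unconstrained sum \eqref{DiscAction} and then substitutes $\widetilde{\mathcal{L}}_d$ for $\mathcal{L}_d$ — which amounts to the same computation.
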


\begin{proof}
The poof will be divided into two parts. The first one consists on studying the variations of the action sum \eqref{DiscAction} associated with $\mathcal{L}_{d}$. After that, our result follows by the incorporation of the constraints and Lagrange multipliers  by considering  $\widetilde{\mathcal{L}}_{d}$ instead of $\mathcal{L}_{d}$ and \eqref{AugmentedActionSum} instead of \eqref{DiscAction}.

Taking variations on the discrete action sum \eqref{DiscAction} with $q_0=(p_0,g_0)$ and $q_N=(p_N,g_N)$ fixed, which in terms of variations implies $\delta p_0=\delta p_N=0$ and $\delta g_0=\delta g_N=0$, the latter leading to $\eta_0=\eta_N=0$, and using the Lemma \ref{lemma}, we obtain
\begin{align}
\delta \sum_{n=0}^{N-1}\mathcal{L}_d(p_n,p_{n+1},W_n\,A_n)=&\sum_{n=1}^{N-1}\bra D_1\mathcal{L}_d(a_n)+D_2\mathcal{L}_d(a_{n-1})\,,\,\delta p_n\ket \nonumber\\
&+\sum_{n=1}^{N-1}\bra T^*_{W_{n-1}}L_{W_{n-1}^{-1}}(T^{*}_{W_{n-1}A_{n-1}}R_{A_{n-1}^{-1}}D_3\mathcal{L}_d(a_{n-1})),\eta_n\ket\nonumber\\
&- \sum_{n=1}^{N-1}\bra T^*_{W_n}R_{W_{n}^{-1}}(T_{W_nA_n}^{*}R_{A_{n}^{-1}}D_3\mathcal{L}_d(a_{n}))\,,\,\eta_n\ket\label{Vari}\\
&+\sum_{n=1}^{N-1}\bra T_{W_nA_n}^{*}L_{W_n^{-1}}D_3\mathcal{L}_d(a_n),\bra D_1A(p_n,p_{n+1}),\delta p_n\ket\ket  \nonumber\\
&+\sum_{n=1}^{N-1}\bra T_{W_{n-1}A_{n-1}}^{*}L_{W_{n-1}^{-1}}D_3\mathcal{L}_d(a_{n-1}),\mathcal{D}\ket,\nonumber
\end{align}
 where $\mathcal{D}=\bra D_2A(p_{n-1},p_n),\delta p_n\ket$,  $D_i$ denotes the partial derivative with respect to the $i$-th variable, $R_{g},L_{g}:G\Flder G$ are the left and right translations by the group variables, while $T^*_hR_g:T^*_hG\Flder T^*_{hg}G$, $T^*_hL_g:T^*_hG\Flder T^*_{gh}G$ are their cotangent action. Therefore, $\delta\mathcal{S}_{d}=0$ for arbitrary variations implies
\begin{subequations}\label{DEoM}
\begin{align}
0&=D_1\mathcal{L}_d(a_n)+D_2\mathcal{L}_d(a_{n-1})+T^{*}\hat L_{(WA_1)}(n)D_3\mathcal{L}_d(a_n)\nonumber\\&+T^{*}\hat L_{(WA_2)}(n-1)D_3\mathcal{L}_d(a_{n-1}),\label{DEoMa}\\
0&= T^*_{W_{n-1}}L_{W_{n-1}^{-1}}(T^{*}_{W_{n-1}A_{n-1}}R_{A_{n-1}^{-1}}D_3\mathcal{L}_d(a_{n-1}))\nonumber\\&- T^*_{W_{n}}R_{W_{n}^{-1}}(T_{W_nA_{n}}^{*}R_{A_{n}^{-1}}D_3\mathcal{L}_d(a_{n})),\label{DEoMb}
\end{align}
\end{subequations}
for $n=1,...,N-1$, where we define locally the operator $T^{*}\hat L_{(WA_i)}$ by its action on $T^*G$. Namely, $T^{*}\hat L_{(WA_i)}(j):T_{gA_i}^*G\Flder T_{p_j}U$ for $U\subset (Q/G)$ is defined by
\begin{equation}\label{Aoperator}
\bra T^{*}\hat L_{(g\,A_i)}(j)D_3\mathcal{L}_d(a),\delta  p_j\ket:=\bra  T_{gA}^*L_{g^{-1}}D_3\mathcal{L}_d(a),\bra D_iA,\delta  p_j\ket\ket,
\end{equation}
where $a\in U\times U\times G$, $a:=(p_0, p_1,g\,A(p_0,p_1))$, $i=\lc1,2\rc$ and $j=i-1$  for each $i.$  Let us define $\mu_n:=T_{W_n}^*R_{W_n^{-1}}(T_{W_nA_{n}}^{*}R_{A_{n}^{-1}}D_3\mathcal{L}_d(a_{n}))\in \dal$. It is easy to see that \eqref{DEoMb} can be rewritten in its dual version as
\begin{equation}\label{EuPoinGeneral}
\mu_{n}=\mbox{Ad}_{_{W_{n-1}}}^*\,\mu_{n-1}.
\end{equation}
Next, we introduce constraints in our picture by considering the augmented Lagrangian   \eqref{AugLag} instead of $\mathcal{L}_d$, which inserted into \eqref{Vari} leads to \begin{align}
0=&D_1\mathcal{L}_d(a_n)+D_2\mathcal{L}_d(a_{n-1})+T^{*}\hat L_{(WA_1)}(n)D_3\mathcal{L}_d(a_n)\nonumber\\
&+T^{*}\hat L_{(WA_2)}(n-1)D_3\mathcal{L}_d(a_{n-1})+\lambda_{\alpha}^n\lc D_1\chi_d^{\alpha}(a_n)+T^{*}\hat L_{(WA_1)}(n)D_3\chi_d^{\alpha}(a_n)\rc\nonumber\\
&+\lambda_{\alpha}^{n-1}\lc D_2\chi_d^{\alpha}(a_{n-1})+T^{*}\hat L_{(WA_2)}(n-1)D_3\chi_d^{\alpha}(a_{n-1})\rc,\nonumber\\ \label{DiscEqofMotionSimples}\\
 \mu_n=&\mbox{Ad}^*_{W_{n-1}}\mu_{n-1}-\lambda_{\alpha}^n\varepsilon_n^{\alpha}+\lambda_{\alpha}^{n-1}\mbox{Ad}^*_{W_{n-1}}\varepsilon_{n-1}^{\alpha},\nonumber\\
0=&\chi_d^{\alpha}(a_n),\nonumber
\end{align} for $n=1,...,N-1$, where we denote $\varepsilon_n^{\alpha}:=T_{W_n}^*R_{W_n^{-1}}(T_{W_nA_{n}}^{*}R_{A_{n}^{-1}}D_3\chi_d^{\alpha}(a_{n}))\in\dal$.  

\end{proof}

To obtain the discrete-time equations \eqref{DiscEqofMotionSimples} we used the approach studied in \cite{MMM}. That is,  by using a discrete connection instead of deriving the local description of the curvature terms as in \cite{FZ2}. This approach automatically gives preservation of momentum and symplecticity since we employ a variational approach (see \cite{MMM} for further details).

Note that the first equation in \eqref{DiscEqofMotionSimples} represents a discrete-time version of the second equation in \eqref{LagPoincareGeneral} (or equivalently \eqref{LagPoinLocal} in a local description) where the curvature terms are included in the terms that come from \eqref{Aoperator}. The second equation represents the (constrained) Euler-Poincar\'e part (first equation in \eqref{DiscEqofMotionSimples}) in \eqref{LagPoincareGeneral}, or \eqref{LagPoinLocal} in the local representation.

Next, define $M_{(1,3)}(a_{n}):=D_1\chi_d^{\alpha}(a_n)+T^{*}\hat L_{(WA_1)}(n)D_{3}\chi_d^{\alpha}(a_n)$, where $D_i$ denotes the partial derivative with respect to the $i$-th component, while $D_{ij}=D_iD_j=D_jD_i$. Equations \eqref{DiscEqofMotionSimples} determine a numerical integrator giving rise to a unique (local) variational flow given an initial value on $U\times U\times G\times\mathbb{R}^{m}$ under the following algebraic conditions:

\begin{proposition}\label{proposition1} 
Let $\mathcal{M}_{d}$ be a regular submaniolfd of $(U\times U\times G)$ given by $$\mathcal{M}_{d}=\{a_n\in (U\times U\times G) \big|\; \chi_d^{\alpha}(a_n)=0\},$$
where $a_n$ is defined in \eqref{a_n}. If the matrix \begin{equation*}\label{regu}
\begin{bmatrix}
 D_{12}\widetilde{\mathcal{L}}_{d}(a_n,\lambda^{n}) &  D_{3}\left(T^{*}\widehat{L}_{(WA_1)}(n)D_3\widetilde{\mathcal{L}}_{d}(a_n,\lambda^n)\right) & M_{(1,3)}(a_{n}) \\
D_{2}\mu_n(a_n) &  D_3\mu_n(a_n) &\epsilon_{n}^{\alpha}(a_n) \\
(D_{2}\chi_{d}^{\alpha}(a_n))^{T} & (D_{3}\chi_{d}^{\alpha}(a_n))^{T} & 0 \\
\end{bmatrix}
\end{equation*} 
is non singular for all $a_n\in {\mathcal M}_d$, there exists a neighborhood $\mathcal{U}_k\subset \mathcal M_d\times \R^{m}$ of $(a_n^{*}, \lambda^0_{\alpha*})$ satisfying equations \eqref{DiscEqofMotionSimples}, and an unique (local) application $\Upsilon_{\mathcal{L}_d}:\mathcal{U}_k\subset\mathcal M_d\times \R^{m}\rightarrow\mathcal M_d\times \R^{m}$ such that
\begin{align*}
\Upsilon_{\mathcal{L}_d}(a_{n}, \lambda^0_{\alpha})=&(a_{n+1}, \lambda^1_{\alpha}).
\end{align*}
\end{proposition}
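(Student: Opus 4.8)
The plan is to regard the three relations in \eqref{DiscEqofMotionSimples}, read at the fixed index $n$, as a single implicit equation $\mathcal{F}=0$ for the data produced by one step of the integrator, and then to apply the implicit function theorem; the matrix displayed in the statement is precisely the partial Jacobian of $\mathcal{F}$ that one must invert. First I would, working in the local trivialization \eqref{TrivialChart} and with $a_n=(p_n,p_{n+1},W_nA_n)$ as in \eqref{a_n}, treat $(a_{n-1},\lambda^{n-1})$ and $p_n$ as given and collect the left-hand sides of \eqref{DiscEqofMotionSimples} into
\[
\mathcal{F}\bigl(p_{n+1},W_n,\lambda^n\bigr)\in T^*_{p_n}U\times\dal\times\R^m,
\]
with components the base (Lagrange--Poincar\'e) equation, the momentum relation $\mu_n-\mathrm{Ad}^*_{W_{n-1}}\mu_{n-1}+\lambda^n_\alpha\varepsilon^\alpha_n-\lambda^{n-1}_\alpha\mathrm{Ad}^*_{W_{n-1}}\varepsilon^\alpha_{n-1}$, and the constraints $\chi^\alpha_d(a_n)$. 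With $r=\dim(Q)-\dim(G)$, the unknowns $p_{n+1}\in U$, $W_n\in G$ and $\lambda^n\in\R^m$ number $r+\dim G+m$, matching the dimension $r+\dim G+m$ of the codomain, so $\mathcal{F}=0$ is a square system and the implicit function theorem is the natural tool.

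Next I would differentiate $\mathcal{F}$ with respect to $(p_{n+1},W_n,\lambda^n)$ and check, block by block, that the result is the matrix of the statement. The three columns correspond to differentiation in the second slot $p_{n+1}$ (a $D_2$-type derivative), the third group slot $W_n$ (a $D_3$-type derivative) and the multiplier $\lambda^n$; the three rows to the three components of $\mathcal{F}$. Differentiating the base equation, whose leading term is $D_1\widetilde{\mathcal{L}}_d(a_n)$ together with the operator term \eqref{Aoperator} and the multiplier term from \eqref{AugLag}, yields the top row $\bigl(D_{12}\widetilde{\mathcal{L}}_d,\,D_3(T^*\widehat L_{(WA_1)}(n)D_3\widetilde{\mathcal{L}}_d),\,M_{(1,3)}(a_n)\bigr)$. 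Differentiating the momentum relation gives $\bigl(D_2\mu_n,\,D_3\mu_n,\,\varepsilon^\alpha_n\bigr)$, and differentiating the constraints gives $\bigl((D_2\chi^\alpha_d)^T,(D_3\chi^\alpha_d)^T,0\bigr)$, the vanishing lower-right block reflecting the independence of $\chi^\alpha_d$ of $\lambda^n$. By hypothesis this Jacobian is non-singular at every $a_n\in\mathcal{M}_d$.

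Finally, evaluating at a solution $(a_n^*,\lambda^0_{\alpha*})$ of \eqref{DiscEqofMotionSimples} lying over $\mathcal{M}_d$, the implicit function theorem furnishes a neighborhood $\mathcal{U}_k\subset\mathcal{M}_d\times\R^m$ and a unique smooth map returning $(p_{n+1},W_n,\lambda^n)$; together with the retained $p_n$ this reconstructs the next point of the trajectory and thus defines the claimed flow $\Upsilon_{\mathcal{L}_d}(a_n,\lambda^0_\alpha)=(a_{n+1},\lambda^1_\alpha)$. The flow is valued in $\mathcal{M}_d\times\R^m$ because the constraint component of $\mathcal{F}$ forces the new point onto $\mathcal{M}_d$, which is a genuine submanifold by the standing regularity assumption. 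The main obstacle is the bookkeeping of the second step: the entries are not plain second derivatives of $\widetilde{\mathcal{L}}_d$, since both the operator $T^*\widehat L_{(WA_i)}$ of \eqref{Aoperator} and the momentum $\mu_n=T^*_{W_n}R_{W_n^{-1}}(T^*_{W_nA_n}R_{A_n^{-1}}D_3\mathcal{L}_d(a_n))$ couple cotangent lifts on $G$ with the $p$-dependence of $A_n=A(p_n,p_{n+1})$; one must differentiate these composite expressions carefully, trivializing $TG$ and $T^*G$ so that group-slot derivatives land in $\al$ and $\dal$, and verify that the resulting cross terms reorganize exactly into the displayed blocks.
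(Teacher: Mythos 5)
Your proposal is correct and follows exactly the paper's route: the paper's entire proof is the one-line statement that the result ``is a direct consequence of the implicit function theorem applied to equations \eqref{DiscEqofMotionSimples}'', and your argument is precisely that application, with the unknowns $(p_{n+1},W_n,\lambda^n)$, the square-system dimension count, and the block-by-block identification of the Jacobian with the displayed matrix spelled out. Your writeup simply supplies the bookkeeping the paper leaves implicit, so there is nothing to flag.
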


\textit{Proof:} It is a direct consequence of the implicit function theorem applied to equations \eqref{DiscEqofMotionSimples}. \hfill$\square$

\begin{remark}
Note that the regularity condition given in Proposition \ref{proposition1} represents a first order discretization of the regularity condition for first order vakonomic systems presented, for instance, in \cite{Bloch} (Section $7.3$, Equation $7.3.5$), \cite{Benito2}, \cite{LeoJGM}, \cite{Jorge}, \cite{vako2} and \cite{vakoalg}. In general such a condition for continuous time systems is expressed as a $2\times 2$ block-matrix  $\begin{bmatrix}
A &  B \\
C&  D \\
\end{bmatrix}$ where $A$ corresponds to the matrix giving the classical hyper-regularity condition for the equivalence between Lagrangian and Hamiltonian formalism in mechanics by means of the Legendre transform (this corresponds with the first two entries in the first row of the matrix in Proposition \ref{proposition1}). The sub-matrix $C$ corresponds to the partial derivative of the constraints with respect to the velocities, as does the sub-matrix $B$, and $D$ is the null sub-matrix. Taking into account the split into vertical and horizontal variables, it is easy to see the similarities of the regularity condition of the continuous-time and discrete-time systems.\hfill$\diamond$
\end{remark}

\begin{remark}
In the case $Q=G$, \eqref{EuPoinGeneral} reduces to the usual discrete Euler-Poincar\'e equations \cite{MPS}. In this case $A=e$, and therefore $a_n=W_n$. Thus, \eqref{DEoMb} becomes
\[
T^{*}	_{W_{n-1}}L_{W_{n-1}^{-1}}\mathcal{L}_d^{\prime}(W_{n-1})- T^*_{W_{n}}R_{W_{n}^{-1}}\mathcal{L}_d^{\prime}(W_{n})=0,
\]
where $^{\prime}$ denotes the derivative with respect to $W$. Setting $\mu_n:=T^*_{W_{n}}R_{W_{n}^{-1}}\mathcal{L}_d^{\prime}(W_{n})$, $\mu_n\in\dal$, we arrive at the discrete Lie-Poisson equations 
{\rm$\mu_{n}=\mbox{Ad}_{_{W_{n-1}}}^*\,\mu_{n-1}$}.\hfill$\diamond$
\end{remark}

\subsubsection{Variational integrators for constrained HO Lagrange-Poincar\'e equations:} Next, we consider the HO case \eqref{HOcase}. According to \eqref{HOSplit} and \eqref{Bijecton}, we can find local coordinates $[q_0,q_1,...,q_k]_G$ in $U^{(k+1)}\times\,G^{k}$ given by
\begin{equation}\label{akhigherorder}
\lp p_0,...,p_k,\widetilde g_0,\widetilde g_1,....,\widetilde g_{k-1}\rp,
\end{equation}
where by $\widetilde g_i:= g_i^{-1}g_{i+1}A(p_i,p_{i+1})$ we denote the element of the $i$-th copy of $\widetilde G$, for $i=0,...,k-1$ and $U^{(k+1)}$ denotes $(k+1)$-copies of the neighborhood $U\subset Q/G$. The variation of the $i$-th copy of $Q/G$ is given as before by $\delta p_i$, for $i=0,...,k$; while the variation of $\widetilde g_i$ is given by 
\begin{equation}\label{VarTildeg}
\begin{split}
\delta\widetilde g_i=&-\eta_iW_iA_i+W_i\eta_{i+1}\,A_i+W_i\bra D_1A_i,\delta p_i\ket+W_i\bra D_2A_i,\delta p_{i+1}\ket,
\end{split}
\end{equation}
where we have set  $W_i=g_i^{-1}g_{i+1}\in G$, $\eta_i=g_i^{-1}\delta g_i\in\al$ and $A_i:=A(p_i,p_{i+1}).$

In the HO case, given the grid $\{t_{n}=nh\mid n=0,\ldots,N\}$,
with $Nh=T$, the discrete path space is determined by 
\[
\mathcal{C}_d\lp U^{(k+1)}\times \widetilde{G}^{k}\rp:=\lc \tilde\gamma_d:\lc t_n\rc_{n=0}^N\Flder U^{(k+1)}\times \widetilde{G}^{k}\rc. 
\]
The discrete space will be identified with its image, i.e. $\tilde\gamma_d(t_n)=\lc \tilde a_n\rc_{n=0}^N$, where we employ the notation 
\begin{equation}\label{antilde}
\tilde a_n:=(p_n,p_{n+1},...,p_{n+k},\tilde g_n,\tilde g_{n+1},...,\tilde g_{n+k-1}).
\end{equation} 

We see that $\tilde a_n$ is a $(2k+1)$-tuple with $2k+1$ elements. 
This discrete path space is isomorphic to the
smooth product manifold which consists of $N+1$ copies of $U^{(k+1)}\times \widetilde{G}^{k}$ (which locally is isomorphic to $N+1$ copies of $(Q/G)^{(k+1)}\times_{Q/G}\times \tilde G^{k}$).

Let us define the discrete action sum associated with the HO  Lagrangian $\mathcal{L}_{d}$ as $\mathcal{S}_d:\mathcal{C}_d\lp U^{(k+1)}\times \widetilde{G}^{k}\rp\Flder\R$ given by
\begin{equation}\label{DiscActionHO}
\mathcal{S}_d(\tilde\gamma_d)=\sum_{n=0}^{N-k}\mathcal{L}_d([q_n,q_{n+1},...,q_{n+k}]_G)=\sum_{n=0}^{N-k}\mathcal{L}_d(\tilde a_n)
\end{equation}
where the second equality is established at a local level.

The \textit{discrete constrained HO variational problem} associated with \eqref{HOcase}, consists of finding a discrete path $\tilde\gamma_d\in\mathcal{C}_{d}(U^{(k+1)}\times \widetilde{G}^{k})$, given fixed boundary conditions, which extremizes the discrete action sum \eqref{DiscActionHO} subject to the discrete constraints $\chi^{\alpha}_{d}$. This constrained optimization problem is equivalent to studying the (unconstrained) optimization problem for the \textit{augmented Lagrangian} $\widetilde{\mathcal{L}}_{d}: H^{(k+1,k)}\times\R^{m}\Flder\R$ given by 
\begin{equation}\label{AugLagHO}
\widetilde{\mathcal{L}}_{d}([q_n,...,q_{n+k}]_G,\lambda_n):=\mathcal{L}_{d}([q_n,...,q_{n+k}]_G)+\lambda_{\alpha}^n\chi_d^{\alpha}([q_n,...,q_{n+k}]_G),
\end{equation}
 where 
$\lambda^n_{\alpha}=(\lambda_1^n,...,\lambda_m^n)\in\R^m$ are Lagrange multipliers, and its associated action sum is given by \begin{equation}\label{AugmentedActionSumHO}
\mathcal{S}_d(\hat\gamma_{d})=\sum_{n=0}^{N-k}\widetilde{\mathcal{L}}_d([q_n,...,q_{n+k}]_G,\lambda_n)=\sum_{n=0}^{N-k}\widetilde{\mathcal{L}}_d(\tilde a_n,\lambda_{\alpha}^n),
\end{equation}
where again the second equality is given at a local level and $\hat\gamma_d\in\mathcal{C}_{d}( U^{(k+1)}\times \widetilde{G}^{k}\times\R^m):=\{\hat\gamma_{d}:\{t_{n}\}_{n=0}^{N}\ra U^{ (k+1)}\times \widetilde{G}^{k}\times \R^m\}$ is the discrete augmented path space.

Regarding the endpoint conditions, we shall consider $q_{(0,k-1)}=(p_{(0,k-1)},g_{(0,k-1)})$ and $q_{(N-k+1,N)}=(p_{(N-k+1,N)},g_{(N-k+1,N)})$ fixed, where $q_{(0,k-1)}=\lc q_0,q_1,...,q_{k-1}\rc$, $q_{(N-k+1,N)}=\lc q_{N-k+1},q_{N-k+2},...,q_N\rc$, and analogously for any sequence. In terms of variations this implies $\delta p_{(0,k-1)}=\delta p_{(N-k+1,N)}=0$ and $\delta g_{(0,k-1)}=\delta g_{(N-k+1,N)}=0$, the latter leading to $\eta_{(0,k-1)}=\eta_{(N-k+1,N)}=0$. Furthermore, the Lagrange multipliers are set freely as in the first order case.

We establish  the result in the following theorem, where the discrete constrained HO Lagrange-Poincar\'e equations are obtained. As in the case of Theorem \ref{TheoremDisc1}, our proof strategy consists in studying the unconstrained problem \eqref{DiscActionHO}, and afterwards adding the constraints \eqref{AugLagHO}.

\begin{theorem}\label{HOTheorem}
A discrete sequence $\lc \tilde a_n,\lambda^n_{\alpha}\rc_{n=0}^N\in \mathcal{C}_{d}(U^{(k+1)}\times \widetilde{G}^{k}\times\R^m)$ is an extremum of the action sum \eqref{AugmentedActionSumHO}, with respect to  variations $\delta[q_n,...,q_{n+k}]_G$ defined in \eqref{VarTildeg} and endpoint conditions expressed above, if it is a solution of the discrete constrained HO Lagrange-Poincar\'e equations  \eqref{EqHigherOrder}.
\end{theorem}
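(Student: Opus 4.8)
The plan is to follow verbatim the two-stage strategy used for Theorem \ref{TheoremDisc1}: first extremize the unconstrained action sum \eqref{DiscActionHO} built from $\mathcal{L}_d$, and only afterwards promote $\mathcal{L}_d$ to the augmented Lagrangian \eqref{AugLagHO} and let the multipliers $\lambda_\alpha^n$ vary freely. First I would compute $\delta\mathcal{S}_d(\tilde\gamma_d)=\sum_{n=0}^{N-k}\delta\mathcal{L}_d(\tilde a_n)$ by the chain rule, so that each summand splits into derivatives $D_{j+1}\mathcal{L}_d(\tilde a_n)$ paired with $\delta p_{n+j}$ (for $j=0,\ldots,k$, the $p$-slots) and derivatives $D_{k+2+i}\mathcal{L}_d(\tilde a_n)$ paired with $\delta\tilde g_{n+i}$ (for $i=0,\ldots,k-1$, the group slots). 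Into the latter I would substitute the explicit variation \eqref{VarTildeg}, separating each $\delta\tilde g_{n+i}$ into its \emph{vertical} part (the terms $-\eta_{n+i}W_{n+i}A_{n+i}$ and $W_{n+i}\eta_{n+i+1}A_{n+i}$) and its \emph{horizontal} part (the connection terms $W_{n+i}\langle D_1A_{n+i},\delta p_{n+i}\rangle$ and $W_{n+i}\langle D_2A_{n+i},\delta p_{n+i+1}\rangle$).

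Second, I would perform the discrete integration by parts (reindexing of the sums) that collects, for each fixed interior index $n$, the entire coefficient of $\delta p_n$ and of $\eta_n$. This is the genuinely new ingredient relative to the first-order computation: the variable $p_n$ enters $\mathcal{L}_d(\tilde a_m)$ for all $m\in\{n-k,\ldots,n\}$ and $\tilde g_n$ enters for all $m\in\{n-k+1,\ldots,n\}$, so the reindexing produces $(k+1)$-fold and $k$-fold shifted sums $\sum_j D_{j+1}\mathcal{L}_d(\tilde a_{n-j})$ rather than the two-term expressions of \eqref{Vari}. The horizontal connection contributions would be packaged, exactly as in \eqref{Aoperator}, by operators of the type $T^{*}\hat L_{(WA_i)}$ acting on the group-slot derivatives $D_{k+2+i}\mathcal{L}_d$, now summed over all the shifted copies. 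Using the cotangent-lift relation \eqref{relation-cltl} together with $W_n=g_n^{-1}g_{n+1}$ and $\delta g_n=g_n\eta_n$, the vertical terms assemble into momenta $\mu_{n+i}$ (defined, as in Theorem \ref{TheoremDisc1}, through $T^{*}R$-translations of $D_{k+2+i}\mathcal{L}_d$) related across time steps by $\mbox{Ad}^*$ maps.

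Third, I would impose the endpoint conditions $\delta p_{(0,k-1)}=\delta p_{(N-k+1,N)}=0$ and $\eta_{(0,k-1)}=\eta_{(N-k+1,N)}=0$ to discard all boundary contributions of the summation by parts. Since the remaining interior variations $\delta p_n$ and $\eta_n$ are independent and arbitrary, $\delta\mathcal{S}_d=0$ forces each of their collected coefficients to vanish, which are precisely the horizontal and vertical equations in \eqref{EqHigherOrder}. Finally, replacing $\mathcal{L}_d$ by $\widetilde{\mathcal{L}}_d=\mathcal{L}_d+\lambda_\alpha^n\chi_d^\alpha$ adds to each coefficient the corresponding $\lambda$-weighted derivatives of $\chi_d^\alpha$ and, since the $\lambda_\alpha^n$ are free, stationarity with respect to them returns the discrete constraints $\chi_d^\alpha(\tilde a_n)=0$.

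The hard part will be the second step: tracking the index shifts in the group-valued terms so that the cotangent lifts $T^{*}L_{W^{-1}}$, $T^{*}R_{W^{-1}}$ and $T^{*}R_{A^{-1}}$ recombine correctly after reindexing, and verifying that the telescoping of the vertical terms yields the intended $\mbox{Ad}^*$-recursion for the momenta at every order. The horizontal $\delta p$ bookkeeping and the constraint incorporation are routine generalizations of \eqref{Vari}--\eqref{DiscEqofMotionSimples}, but the combinatorics of the $k$-fold shifted connection terms is where errors are most likely to enter.
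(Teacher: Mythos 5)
Your proposal is correct and follows essentially the same route as the paper's proof: vary the unconstrained sum \eqref{DiscActionHO} using \eqref{VarTildeg}, reindex so that the coefficient of each interior $\delta p_n$ becomes the $(k+1)$-fold shifted sum $\sum_i D_i\mathcal{L}_d(\tilde a_{n-i+1})$ and that of each $\eta_n$ the $k$-fold shifted group-slot sum, package the connection terms with $T^{*}\hat L_{(WA_i)}$ and the vertical terms into momenta $M_n$ satisfying $M_n=\mathrm{Ad}^*_{W_{n-1}}M_{n-1}$, and only then pass to the augmented Lagrangian \eqref{AugLagHO} to pick up the $\lambda$-weighted constraint derivatives and the equations $\chi^\alpha_d(\tilde a_n)=0$. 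Your index bookkeeping (that $p_n$ enters $\tilde a_m$ for $m\in\{n-k,\ldots,n\}$ and $\tilde g_n$ for $m\in\{n-k+1,\ldots,n\}$) matches the paper's relabeling of the $z$-th slot by $n+z-k-2$, so the step you flag as hardest is exactly where the paper's proof does its work as well.
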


\begin{proof}
In the proof we will employ the index $i$ for the $k+1$ first elements, i.e. the $p$ coordinates, and the index $z$ for the last $k$, i.e. the $\widetilde g$ coordinates. Taking variations in \eqref{DiscActionHO}, according to the endpoint conditions detailed above and the variations \eqref{VarTildeg} we obtain:
\begin{align}
\delta \sum_{n=0}^{N-k}\mathcal{L}_d(\tilde a_n)=&\sum_{n=0}^{N-k}\lp\sum_{i=1}^{k+1}\bra D_i\mathcal{L}_d(\tilde a_n),\delta p_i\ket+\sum_{\ti=k+2}^{2k+1}\bra D_{\ti}\mathcal{L}_d(\tilde a_n),\delta\widetilde g_{\ti}\ket \rp\label{VarHigherOrder}\\
=&\sum_{n=0}^{N-k}\sum_{i=1}^{k+1}\bra D_i\mathcal{L}_d(\tilde a_n),\delta p_i\ket\nonumber\\ 
&+\sum_{n=0}^{N-k}\sum_{\ti=k+2}^{2k+1}\left(\bra T^*_{W_{\ti}}R_{W_{\ti}^{-1}}(T^*_{W_{\ti}A_{\ti}}R_{A_{\ti}^{-1}}D_{\ti}\mathcal{L}_d(\tilde a_n)),-\eta_{\ti}\ket\right.\nonumber\\
&\left.\qquad\qquad\qquad\qquad+ \bra T^*_{W_{\ti}}L_{W_{\ti}^{-1}}(T^*_{W_{\ti}A_{\ti}}R_{A_{\ti}^{-1}}D_{\ti}\mathcal{L}_d(\tilde a_n)),\eta_{\ti+1}\ket\right.\nonumber\\
&\left.\qquad\qquad\qquad\qquad\qquad+\bra T^*_{W_{\ti}A_{\ti}}L_{W_{\ti}^{-1}} D_{\ti}\mathcal{L}_d(\tilde a_n),\bra D_1A_{\ti},\delta p_{\ti}\ket\ket\right.\nonumber\\
&\left.\qquad\qquad\qquad\qquad\qquad+ \bra T^*_{W_{\ti}A_{\ti}}L_{W_{\ti}^{-1}}D_{\ti}\mathcal{L}_d(\tilde a_n),\bra D_2A_{\ti},\delta p_{\ti+1}\ket\ket\right) \nonumber
\end{align} where we  have employed \eqref{VarTildeg}.  Next, we assume that the $z$-th component, for $z=k+2,...,2k+1$,  is labeled by $n+z-k-2$ and rearranging the sum above after taking into account the endpoint conditions we obtain:

\[
\begin{split}
\delta \sum_{n=0}^{N-k}\mathcal{L}_d(\tilde a_n)=&\sum_{n=k}^{N-k}\bra\sum_{i=1}^{k+1}D_i\mathcal{L}_d(\tilde a_{n-i+1}),\delta p_n\ket\\
+&\sum_{n=k}^{N-k}\bra-\sum_{\ti=k+2}^{2k+1} T^*_{W_{n}}R_{W_{n}^{-1}}(T^*_{W_{n}A_{n}}R_{A_{n}^{-1}}D_{\ti}\mathcal{L}_d(\tilde a_{n-\ti+k+2})),\eta_n\ket\\
+&\sum_{n=k}^{N-k}\bra\sum_{\ti=k+2}^{2k+1} T^*_{W_{n-1}}L_{W_{n-1}^{-1}}(T^*_{W_{n-1}A_{n-1}}R_{A_{n-1}^{-1}}D_{\ti}\mathcal{L}_d(\tilde a_{n-\ti+k+1})),\eta_{n}\ket\\
+&\sum_{n=k}^{N-k}\bra\sum_{\ti=k+2}^{2k+1}\lp T^*\hat L_{_{WA_1}}(n)D_{\ti}\mathcal{L}_d(\tilde a_{n-\ti+k+2})\right.\\&\hspace{4cm}\left.+ T^*\hat L_{_{WA_2}}(n-1)D_{\ti}\mathcal{L}_d(\tilde a_{n-\ti+k+1})\rp,\delta p_{n}\ket,
\end{split}
\]
where the operator $T^*\hat L_{_{WA_i}}(n)$ is defined in \eqref{Aoperator}. Equating this variation to zero and considering that $\delta p_n$ and $\eta_n$ are free for $k\leq n\leq N-k$, we arrive at the discrete equations of motion:
\begin{align*}
0=&\sum_{i=1}^{k+1}D_i\mathcal{L}_d(\tilde a_{n-i+1})\\
&+\sum_{\ti=k+2}^{2k+1}\lp T^*\hat L_{_{WA_1}}(n)D_{\ti}\mathcal{L}_d(\tilde a_{n-\ti+k+2})+ T^*\hat L_{_{WA_2}}(n-1)D_{\ti}\mathcal{L}_d(\tilde a_{n-\ti+k+1})\rp,\\
0=&\sum_{\ti=k+2}^{2k+1}\lp T^*_{W_{n}}R_{W_{n}^{-1}}(T^*_{W_{n}A_{n}}R_{A_{n}^{-1}}D_{\ti}\mathcal{L}_d(\tilde a_{n-\ti+k+2}))\right.\\
&\left.- T^*_{W_{n-1}}L_{W_{n-1}^{-1}}(T^*_{W_{n-1}A_{n-1}}R_{A_{n-1}^{-1}}D_{\ti}\mathcal{L}_d(\tilde a_{n-\ti+k+1}))\rp,
\end{align*}
for $k\leq n\leq N-k$. The second equation may be rewritten in a more compact way in its dual version by making the following identifications

\begin{itemize}
\item[-] $\tilde\mu^{\ti}_n:=D_{\ti}\mathcal{L}_d(\tilde a_{n-\ti+k+2})\in T^*_{W_nA_n}G$ for $k+2\leq \ti\leq 2k+1$, 
\item[-] $\displaystyle{\tilde M_n:=\sum_{\ti=k+2}^{2k+1}\tilde\mu^{\ti}_n\in T^*_{W_nA_n}G}$,
\item[-] $M_n:=T^*_{W_{n}}R_{W_{n}^{-1}}(T^*_{W_{n}A_{n}}R_{A_{n}^{-1}}\tilde M_n)\in\dal$,
\end{itemize}
which leads to the equation $M_n=\mbox{Ad}_{W_{n-1}}^*M_{n-1},\quad k\leq n\leq N-k$.

Next, introducing constraints into our picture by considering the augmented Lagrangian  \eqref{AugLagHO} we find the discrete constrained HO Lagrange-Poincar\'e equations  

\begin{align}
0=&\sum_{i=1}^{k+1}D_i\mathcal{L}_d(\tilde a_{n-i+1})+T^*\hat L_{_{WA_1}}(n)\sum_{\ti=k+2}^{2k+1}D_{\ti}\mathcal{L}_d(\tilde a_{n-\ti+k+2})\nonumber\\
&+ T^*\hat L_{_{WA_2}}(n-1)\sum_{\ti=k+2}^{2k+1}D_{\ti}\mathcal{L}_d(\tilde a_{n-\ti+k+1})
+\sum_{i=1}^{k+1}\lambda_{\alpha}^{n-i+1}D_i\chi_d^{\alpha}(\tilde a_{n-i+1})\nonumber\\
&+T^*\hat L_{_{WA_1}}(n)\sum_{\ti=k+2}^{2k+1}\lambda_{\alpha}^{n-\ti+k+2}D_{\ti}\chi_d^{\alpha}(\tilde a_{n-\ti+k+2})\nonumber\\
&+T^*\hat L_{_{WA_2}}(n-1)\sum_{\ti=k+2}^{2k+1}\lambda_{\alpha}^{n-\ti+k+1}D_{\ti}\chi_d^{\alpha}(\tilde a_{n-\ti+k+1}),\nonumber\\\label{EqHigherOrder}\\
M_n=&\mbox{Ad}^*_{W_{n-1}}M_{n-1}-\sum_{\ti=k+2}^{2k+1}\lambda_{\alpha}^{n-\ti+k+2}\,\varepsilon^{\alpha}_{(n,\ti)}+\mbox{Ad}^*_{W_{n-1}}\sum_{\ti=k+2}^{2k+1}\lambda_{\alpha}^{n-\ti+k+1}\,\varepsilon^{\alpha}_{(n-1,\ti)},\nonumber\\\nonumber\\
0=&\chi^{\alpha}_{d}(\tilde a_n),\nonumber
\end{align}
for $k\leq n\leq N-k$, where we define
\[
\varepsilon^{\alpha}_{(n,\ti)}:=T^*_{W_{n}}R_{W_{n}^{-1}}(T^*_{W_{n}A_{n}}R_{A_{n}^{-1}}D_{\ti}\chi_d^{\alpha}(\tilde a_{n-\ti+k+2}))\in\al^{*}.
\]

\end{proof}
As in the first order case, a direct consequence of the implicit function theorem applied to \eqref{EqHigherOrder} is the existence of the (local) variational flow for the numerical method.

Denoting $M_{(1,k+2)}(\tilde{a}_{n}):=D_1\chi_d^{\alpha}(\tilde a_n)+T^{*}\hat L_{(WA_1)}(n)D_{k+2}\chi_d^{\alpha}(\tilde a_n)$ we arrive at the following proposition.
\begin{proposition}\label{proposition2} 
Let $\widetilde{\mathcal{M}}_{d}$ be a regular submanifold of $U^{(k+1)}\times G^{k}$ given by $$\widetilde{\mathcal{M}}_{d}=\{\tilde{a}_n\in U^{(k+1)}\times G^{k} \big|\; \chi_d^{\alpha}(\tilde{a}_{n-j})=0 \hbox{ for all }j=0,\ldots,k\}$$ where $\tilde{a}_{n}$ is of the form \eqref{antilde}.

If the matrix  \begin{equation*}
\begin{bmatrix}
 D_{(1,k+1)}\widetilde{\mathcal{L}}_{d}(\tilde a_n,\lambda^{n}) &  D_{2k+1}\left(T^{*}\widehat{L}_{(WA_1)}(n)D_{k+2}\widetilde{\mathcal{L}}_{d}(\tilde a_n,\lambda^n)\right) & M_{(1,k+2)}(\tilde{a}_{n})\\
D_{k+1}M_n(\tilde a_n) &  D_{2k+1}M_n(\tilde a_n) &\varepsilon_{(n,k+2)}^{\alpha}(\tilde a_n)\\
(D_{k+1}\chi_{d}^{\alpha}(\tilde a_n))^{T} & (D_{2k+1}\chi_{d}^{\alpha}(\tilde a_n))^{T} & 0 \\
\end{bmatrix}
\end{equation*} 
is non singular for all $\tilde{a}_n\in {\widetilde{\mathcal M}}_d$, there exists a neighborhood $\mathcal{V}_k\subset \widetilde{\mathcal M}_d\times k\R^{m}$ of $\gamma^{*}=(\tilde{a}_{n-k}^{*},\ldots,\tilde{a}_{n-1}^{*}, \lambda^{n-k}_{\alpha*},\ldots,\lambda^{n-1}_{\alpha*})$ satisfying equations \eqref{EqHigherOrder}, and an unique (local) application $\widetilde{\Upsilon}_{\mathcal{L}_d}:\mathcal{V}_k\subset\widetilde{\mathcal M}_d\times k\R^{m}\rightarrow\widetilde{\mathcal M}_d\times k\R^{m}$ such that
\begin{align*}
\widetilde{\Upsilon}_{\mathcal{L}_d}(\tilde{a}_{n-k},\ldots,\tilde{a}_{n-1}, \lambda^{n-k}_{\alpha},\ldots,\lambda^{n-1}_{\alpha})=&(\tilde{a}_{n-k+1},\ldots,\tilde{a}_{n}, \lambda^{n-k+1}_{\alpha},\ldots,\lambda^{n}_{\alpha}).
\end{align*}
\end{proposition}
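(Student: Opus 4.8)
The plan is to recognize the discrete constrained HO Lagrange-Poincar\'e equations \eqref{EqHigherOrder}, read at a fixed index $n$, as the vanishing locus of one smooth map and then to invert it by the implicit function theorem. First I would assemble the three equations of \eqref{EqHigherOrder} into a single map
\[
F\colon \mathcal{W}\times\left(U\times G\times\R^{m}\right)\Flder \R^{r}\times\dal\times\R^{m},
\]
whose three components are, in order, the base (Lagrange-Poincar\'e) equation valued in $\R^{r}$ with $r=\dim(Q)-\dim(G)$, the group equation $M_n=\mbox{Ad}^{*}_{W_{n-1}}M_{n-1}-\cdots$ valued in $\dal$, and the constraint equation $\chi^{\alpha}_{d}(\tilde a_n)=0$ valued in $\R^{m}$. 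Here $\mathcal{W}$ is a neighbourhood of the history $\gamma^{*}=(\tilde a^{*}_{n-k},\ldots,\tilde a^{*}_{n-1},\lambda^{n-k}_{\alpha*},\ldots,\lambda^{n-1}_{\alpha*})$, and the second factor carries the data genuinely produced at step $n$.

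The key structural remark is that, because of the overlapping-window form of the tuples $\tilde a_n$ in \eqref{antilde}, passing from $\tilde a_{n-1}$ to $\tilde a_n$ introduces only three new quantities: the newest base point $p_{n+k}$ (the $(k+1)$-th slot of $\tilde a_n$), the newest group datum $\tilde g_{n+k-1}$ (its $(2k+1)$-th slot) and the newest multiplier $\lambda^{n}_{\alpha}$; every other entry of $\tilde a_n$ is inherited from the history by a shift of the window. I would therefore treat $(p_{n+k},\tilde g_{n+k-1},\lambda^{n}_{\alpha})$ as the unknowns and the history as a parameter. A dimension count then shows the system is square, since the unknowns live in $U\times G\times\R^{m}\cong\R^{r}\times\R^{\dim G}\times\R^{m}$ while the target of $F$ has matching dimension $r+\dim G+m$.

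The central step is to differentiate $F$ with respect to these newest unknowns and to verify, block by block, that the resulting Jacobian is exactly the matrix displayed in Proposition \ref{proposition2}. The first column comes from varying $p_{n+k}$: differentiating the term $D_1\mathcal{L}_d(\tilde a_n)$ of the base equation produces the mixed partial $D_{(1,k+1)}\widetilde{\mathcal{L}}_{d}$, while the group and constraint rows give $D_{k+1}M_n$ and $(D_{k+1}\chi^{\alpha}_d)^{T}$. The second column comes from varying $\tilde g_{n+k-1}$, which forces the Lie-group differentiation encoded in the operator $T^{*}\hat L_{(WA_1)}(n)$ of \eqref{Aoperator} and in the translations $T^{*}_{W}L_{W^{-1}}$, $T^{*}_{W}R_{W^{-1}}$, yielding the entries $D_{2k+1}(\,\cdot\,)$ and $D_{2k+1}M_n$. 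The third column, from varying $\lambda^{n}_{\alpha}$, produces $M_{(1,k+2)}$, $\varepsilon^{\alpha}_{(n,k+2)}$ and the zero $(3,3)$ block, the last reflecting that $\chi^{\alpha}_{d}$ is independent of the multipliers.

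Once this identification is established, the hypothesis that the stated matrix is non-singular on all of $\widetilde{\mathcal{M}}_{d}$, and in particular at $\gamma^{*}$, is precisely the invertibility of $\partial F/\partial(p_{n+k},\tilde g_{n+k-1},\lambda^{n}_{\alpha})$ required for the implicit function theorem. This furnishes a neighbourhood $\mathcal{V}_k$ and a unique smooth solution expressing the newest data as a function of the history; composing it with the window shift defines the local flow $\widetilde{\Upsilon}_{\mathcal{L}_d}$ with the asserted action. I would then check that the image again lies in $\widetilde{\mathcal{M}}_{d}\times k\R^{m}$: the third equation enforces $\chi^{\alpha}_{d}(\tilde a_n)=0$, while $\chi^{\alpha}_{d}(\tilde a_{n-j})=0$ for $j=1,\ldots,k$ are inherited from the input lying on $\widetilde{\mathcal{M}}_{d}$. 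The main obstacle is this Jacobian identification on the group factor: one must differentiate the $\dal$-valued equation through $T^{*}\hat L$ and the left/right translations carefully enough to match the blocks $D_{2k+1}M_n$ and $\varepsilon^{\alpha}_{(n,k+2)}$ exactly, and to confirm that the window-shift bookkeeping introduces no spurious couplings that would break the squareness of the system.
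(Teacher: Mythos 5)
Your proposal is correct and takes essentially the same route as the paper: the paper's own proof (as with Proposition \ref{proposition1}) is simply a direct application of the implicit function theorem to equations \eqref{EqHigherOrder}, with the stated matrix playing the role of the Jacobian in the newest unknowns $(p_{n+k},\tilde g_{n+k-1},\lambda^{n}_{\alpha})$. Your write-up supplies the bookkeeping the paper leaves implicit (the overlapping-window identification of the unknowns, the dimension count, and the block-by-block reading of the Jacobian), so it is a more detailed rendering of the same argument rather than a different one.
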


Observe that when $k=1$ equations  \eqref{EqHigherOrder} are the discrete constrained Lagrange-Poincar\'e equations \eqref{DiscEqofMotionSimples} and the regularity condition given in Proposition \ref{proposition2} is the one obtained in Proposition \ref{proposition1}.

\begin{remark}
In \cite{CoMdDZu2013} it has been shown that under a
regularity condition equivalent to the one given in Proposition \ref{proposition2}, the discrete constrained system 
preserves the symplectic $2$-form (see Remark 3.4 in \cite{CoMdDZu2013}). Therefore the methods that we are deriving in this work are automatically
symplectic methods. Moreover, under a group of symmetries preserving
the discrete Lagrangian and the constraints, we additionally obtain
momentum preservation. In the case when the principal bundle is a trivial bundle, and therefore the terms associated with the connection and curvature are zero, we obtain the same results as \cite{CJdD}. \hfill$\diamond$ \end{remark}

\section{Application to optimal control of underactuated systems}\label{OCP}

Underactuated mechanical system are controlled mechanical systems  where the number of the control inputs is strictly less
than the dimension of the configuration space. In this section we consider dynamical optimal control problems for a class of underactuated mechanical systems determined by Lagrangian systems on principal bundles.  

We assume that we are only allowed to have control systems that are controllable, that is, for any two
points $q_0$ and $q_T$ in the configuration space, there exists an
admissible control defined on some interval $[0,T]$ such that
the system with initial condition $q_0$ reaches the point $q_T$ in
time $T$ (see \cite{Bloch} for more details).

Let $L:TQ\to\mathbb{R}$ be a $G$-invariant Lagrangian inducing a reduced Lagrangian $\mathcal{L}:M\to\mathbb{R}$ where $M:=T(Q/G)\times_{Q/G}\widetilde{\mathfrak{g}}$ and $(p,\dot p,\sigma)$ are local coordinates on an open set $\Omega\subset M$. Consider the control manifold $\frak{U}\subseteq\R^{r}$ where $r<\dim Q$ and $u\in \frak{U}$ is the control input 
(control parameter) which in coordinates reads $u=(u_1,\ldots,u_r)\in\mathbb{R}^{r}$. 

We denote by $\Gamma(M^*)$ the space of sections of a smooth manifold $$M^{*}:=T^{*}(Q/G)\times_{Q/G}\widetilde{\mathfrak{g}}^{*}$$  and consider a set of linearly independent sections $B^{a}=\{(\eta^{a},\widetilde{\eta}^{a})\}\in\Gamma(M^{*})$, such that  $\eta^{a}([q]_{G})\in T^{*}_{[q]_{G}}(Q/G)$; $\widetilde{\eta}^{a}([q]_{G})\in\widetilde{\mathfrak{g}}^{*}$ for $a=1,\ldots,r$ and $[q]_G\in \tau(\Omega)\subset Q/G$, where $\tau:M\Flder Q/G$. Therefore $\eta^{a}\oplus\widetilde{\eta}^{a}\in\Gamma(M^{*})$.

\begin{definition}

The \textit{reduced controlled Euler-Lagrange equations} or \textit{controlled Lagrange-Poincar\'e equations} are   \begin{subequations}\label{control}
\begin{align}
\frac{D}{Dt}\left(\frac{\partial \mathcal{L}}{\partial \dot p}\right)-\frac{\partial \mathcal{L}}{\partial p}+\Big{\langle}\frac{\partial \mathcal{L}}{\partial\sigma},i_{\dot{p}}\widetilde{\mathcal{B}}\Big{\rangle}&=u_a\eta^{a}([q]_{G}),\label{controla}\\
\left(\frac{D}{Dt}-\mbox{ad}_{\sigma}^{*}\right)\frac{\partial \mathcal{L}}{\partial \sigma}&=u_a\widetilde{\eta}^a([q]_{G}).\label{controlb}
\end{align}
\end{subequations} A  \textit{controlled Lagrange-Poincar\'e system} is a controlled mechanical systems whose dynamics is given by the controlled Lagrange-Poincar\'e equations \eqref{control}. 
\end{definition}

We refer to a \textit{controlled decoupled  Lagrange-Poincar\'e system} when equations \eqref{controla}-\eqref{controlb} can be written as a system of equations of the form  
\begin{subequations}\label{control1}
\begin{align}
\Big{\langle}\frac{D}{Dt}\left(\frac{\partial \mathcal{L}}{\partial\dot{p}}\right)-\frac{\partial \mathcal{L}}{\partial p}-\Big{\langle}\frac{\partial \mathcal{L}}{\partial\sigma}; i_{\dot{p}}\widetilde{\mathcal{B}}\Big{\rangle},\eta_{a}\Big{\rangle}+\Big{\langle}\left(\frac{D}{Dt}-\mbox{ad}_{\sigma}^{*}\right)\frac{\partial \mathcal{L}}{\partial\sigma},\widetilde{\eta}_{a}\Big{\rangle}=&\,u_{a},\label{control1a}\\
\Big{\langle}\frac{D}{Dt}\left(\frac{\partial \mathcal{L}}{\partial\dot{p}}\right)-\frac{\partial \mathcal{L}}{\partial	 p}-\Big{\langle}\frac{\partial \mathcal{L}}{\partial\sigma}; i_{\dot{p}}\widetilde{\mathcal{B}}\Big{\rangle},\eta_{\alpha}\Big{\rangle}+\Big{\langle}\left(\frac{D}{Dt}-\mbox{ad}_{\sigma}^{*}\right)\frac{\partial \mathcal{L}}{\partial\sigma},\widetilde{\eta}_{\alpha}\Big{\rangle}=&\,0, \label{control1b}
\end{align}
\end{subequations} that is, a controlled Lagrange-Poincar\'e system is written as a control system showing which configurations are actuated and which ones unactuated.

The next Lemma shows that a controlled Lagrange-Poincar\'e system always permits a description for the controlled dynamics as a controlled decoupled  Lagrange-Poincar\'e system.

\begin{lemma}\label{lemmadec}
A controlled Lagrange-Poincar\'e system defined by \eqref{control} is equivalent to the controlled decoupled Lagrange-Poincar\'e system described by \eqref{control1}.

\end{lemma}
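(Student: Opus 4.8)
The plan is to read both systems as two encodings of a single covector identity along the solution, where \eqref{control} records the covector directly and \eqref{control1} records its components against a frame of $M$ adapted to the control directions. First I would package the common left-hand side of \eqref{controla}-\eqref{controlb} into one section $\mathcal{E}\in\Gamma(M^{*})$ of $M^{*}=T^{*}(Q/G)\times_{Q/G}\widetilde{\mathfrak{g}}^{*}$, with base part $\mathcal{E}_{p}:=\DD{}{t}\frac{\partial \mathcal{L}}{\partial\dot p}-\frac{\partial \mathcal{L}}{\partial p}+\scp{\frac{\partial \mathcal{L}}{\partial\sigma}}{i_{\dot p}\widetilde{\mathcal{B}}}\in T^{*}(Q/G)$ and fibre part $\mathcal{E}_{\sigma}:=\lp\DD{}{t}-\mbox{ad}^{*}_{\sigma}\rp\frac{\partial \mathcal{L}}{\partial\sigma}\in\widetilde{\mathfrak{g}}^{*}$, evaluated pointwise along the curve. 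Writing $B^{a}=(\eta^{a},\widetilde{\eta}^{a})$, the controlled Lagrange-Poincaré system \eqref{control} is then the single bundle equation $\mathcal{E}=u_{a}B^{a}$.

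Next I would use the standing hypothesis that $\{B^{a}\}_{a=1}^{r}$ are linearly independent sections of $M^{*}$ to build, over $\tau(\Omega)\subset Q/G$, a frame of $M$ adapted to them. Since the fibre dimension of $M$ is $\dim(Q/G)+\dim\al=\dim Q$ and $r<\dim Q$, the $B^{a}$ span an $r$-dimensional subbundle $\mathcal{W}\subset M^{*}$ with a nontrivial annihilator, so one can choose sections $(\eta_{a},\widetilde{\eta}_{a})$ and $(\eta_{\alpha},\widetilde{\eta}_{\alpha})$ of $M$ that form a frame and satisfy $\scp{B^{b}}{(\eta_{a},\widetilde{\eta}_{a})}=\delta^{b}_{a}$ together with $\scp{B^{b}}{(\eta_{\alpha},\widetilde{\eta}_{\alpha})}=0$; thus the $(\eta_{\alpha},\widetilde{\eta}_{\alpha})$ span the annihilator $\mathcal{W}^{\circ}$ (the unactuated directions) while the $(\eta_{a},\widetilde{\eta}_{a})$ are dual to the control covectors. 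Here $\scp{B^{b}}{(\eta_{c},\widetilde{\eta}_{c})}=\scp{\eta^{b}}{\eta_{c}}+\scp{\widetilde{\eta}^{b}}{\widetilde{\eta}_{c}}$ is the natural pairing $M^{*}\times M\to\R$, split over the base and fibre factors.

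The equivalence then falls out of the bilinearity of this pairing. Contracting $\mathcal{E}=u_{a}B^{a}$ with $(\eta_{a},\widetilde{\eta}_{a})$ gives $\scp{\mathcal{E}_{p}}{\eta_{a}}+\scp{\mathcal{E}_{\sigma}}{\widetilde{\eta}_{a}}=u_{b}\scp{B^{b}}{(\eta_{a},\widetilde{\eta}_{a})}=u_{a}$, which is exactly \eqref{control1a}, while contracting with $(\eta_{\alpha},\widetilde{\eta}_{\alpha})$ annihilates the right-hand side and yields \eqref{control1b}. Conversely, since $\{(\eta_{a},\widetilde{\eta}_{a}),(\eta_{\alpha},\widetilde{\eta}_{\alpha})\}$ is a frame of $M$, the section $\mathcal{E}$ is determined by its pairings against it; expanding $\mathcal{E}$ in the dual coframe $\{B^{a},B^{\alpha}\}$, whose first $r$ members are precisely the control sections $B^{a}$ by the duality relations, equations \eqref{control1a}-\eqref{control1b} force the $B^{\alpha}$-coefficients to vanish and the $B^{a}$-coefficients to equal $u_{a}$, recovering $\mathcal{E}=u_{a}B^{a}$, i.e. \eqref{control}.

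I expect the only genuine obstacle to be the frame construction of the second step: one must check that linear independence of the $B^{a}$ guarantees a smooth adapted frame, which is what makes the decoupling well-posed and forces the statement to be local, carried out in the trivialization $\Omega\subset M$. Everything else is bookkeeping with the bilinear pairing, and the apparent sign discrepancy in the curvature term between \eqref{controla} and \eqref{control1a} is a typographical matter that does not affect the argument, as long as the same covector $\mathcal{E}_{p}$ is used on both sides.
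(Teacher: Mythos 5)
Your proof is correct and follows essentially the same route as the paper: complete the independent sections $B^{a}$ to a basis of $\Gamma(M^{*})$, take the dual frame on $\Gamma(M)$, and pair the two equations of \eqref{control} against the actuated and unactuated frame elements to obtain \eqref{control1a} and \eqref{control1b}. The only additions are that you spell out the converse direction explicitly (pairings against a full frame determine the covector, hence \eqref{control1} implies \eqref{control}), which the paper leaves implicit in the word ``equivalent,'' and your reading of the curvature-term sign mismatch as typographical is consistent with the unforced equations \eqref{LagPoincareGeneral}.
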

\begin{proof}

Given that $B^{a}=\{(\eta^{a},\widetilde{\eta}^{a})\},$
are independent elements of
$\Gamma(M^*)$ we complete $B^{a}$ to be a basis of
$\Gamma(M^{*}),$ i.e.
$\{B^{a},B^{\alpha}\}$, and take its dual basis
$\{B_{a},B_{\alpha}\}$ on
$\Gamma(M).$ If we set 
$B_{a}=\{(\eta_{a},\widetilde{\eta}_{a})\}$ and
$B_{\alpha}=\{(\eta_{a},\widetilde{\eta}_{\alpha})\},$
where $\eta_{a},\eta_{\alpha}\in\mathfrak{X}(Q/G)$ and
$\widetilde{\eta}_{a},\widetilde{\eta}_{\alpha}\in\Gamma(\widetilde{\mathfrak{g}}),$ we  obtain the relationships
\[ 
\begin{split}
\bra\eta^a,\eta_b\ket&=\delta^a_b,\,\bra\eta^a,\tilde\eta_b\ket=\bra\eta^a,\eta_{\beta}\ket=\bra\eta^a,\tilde\eta_{\beta}\ket=0,\\
\bra\tilde\eta^a,\tilde\eta_b\ket&=\delta^a_b,\,\bra\tilde\eta^a,\eta_{\beta}\ket=\bra\tilde\eta^a,\tilde\eta_{\beta}\ket=0,\\
\bra\eta^{\alpha},\eta_{\beta}\ket&=\delta^{\alpha}_{\beta},\,\bra\tilde\eta^{\alpha},\tilde\eta_{\beta}\ket=0,\\
\bra\tilde\eta^{\alpha},\tilde\eta_{\beta}\ket&=\delta^{\alpha}_{\beta}.
\end{split}
\]
Coupling \eqref{controla} to $\eta_a$ and  \eqref{controlb} to $\tilde\eta_a$, and adding up the results we obtain \eqref{control1a}. Equivalently, if we couple  \eqref{controla} to $\eta_{\alpha}$ and  \eqref{controlb} to $\tilde\eta_{\alpha}$, and add up the resultants we obtain \eqref{control1b}.
\end{proof}

\begin{remark}
Observe that \eqref{control1a} provides an expression of the control inputs as a function on the second-order tangent bundle $M^{(2)}$ locally described by coordinates  $(p,\dot p,\ddot p,\sigma, \dot \sigma)$, 
\begin{small}
\begin{equation}\label{ControlesLocales}
u_a=F_a(p,\dot p,\ddot p,\sigma, \dot \sigma)
=\Big{\langle}\frac{D}{Dt}\left(\frac{\partial \mathcal{L}}{\partial\dot{p}}\right)-\frac{\partial \mathcal{L}}{\partial p}-\Big{\langle}\frac{\partial \mathcal{L}}{\partial\sigma}; i_{\dot{p}}\widetilde{\mathcal{B}}\Big{\rangle},\eta_{a}\Big{\rangle}+\Big{\langle}\left(\frac{D}{Dt}-\mbox{ad}_{\sigma}^{*}\right)\frac{\partial \mathcal{L}}{\partial\sigma},\widetilde{\eta}_{a}\Big{\rangle}.
\end{equation}
\end{small}
\hfill$\diamond$
\end{remark}

Next we consider an optimal control problem.

\begin{definition}[Optimal control problem]
 Find a trajectory $\gamma(t)=(p(t), \sigma(t),$ $u(t))$ of the
state variables and control inputs satisfying \eqref{control},
subject to boundary conditions
 $(p(0),\dot p(0), \sigma(0))$ and $(p(T),\dot p(T), \sigma(T))$, and minimizing the cost
functional
\[
{\mathcal J}(s^{(2,1)}, u)=\int_{0}^{T} C(s^{(2,1)}(t),  u(t))\, dt
\]
 for a cost function $C:M\times \frak{U}\to\mathbb{R}$.
 \end{definition}
 
   Solving the optimal control problem is equivalent to solving a constrained second-order
 variational problem \cite{BlochCrouch2}, with  Lagrangian $\hat{\mathcal{L}}:M^{(2)}\Flder\R$  locally described by
\begin{equation}\label{HatLag}
\widehat{\mathcal{L}}(s^{(2,1)}):=C\left(s^{(1,0)} ,F_{a}(s^{(2,1)})\right),
\end{equation}
where $C$ is the cost function and $F_a$ is defined in \eqref{ControlesLocales}; and subject to the constraints
 $\chi^{\alpha}:M^{(2)}\Flder\R$ given by   
\begin{equation}\label{Cons}
\mathcal{\chi}^{\alpha}(s^{(2,1)})=\Big{\langle}\frac{d}{dt}\left(\frac{\partial \mathcal{L}}{\partial\dot{p}}\right)-\frac{\partial \mathcal{L}}{\partial	 p}-\Big{\langle}\frac{\partial \mathcal{L}}{\partial\sigma}; i_{\dot{p}}\widetilde{\mathcal{B}}\Big{\rangle},\eta_{\alpha}\Big{\rangle}+\Big{\langle}\left(\frac{D}{Dt}-\mbox{ad}_{\sigma}^{*}\right)\frac{\partial \mathcal{L}}{\partial\sigma},\widetilde{\eta}_{\alpha}\Big{\rangle},
\end{equation} equivalent to equation \eqref{control1b}.

 Then, given boundary conditions, \textcolor{blue}{necessary optimality conditions} for the optimal control problem are determined by the solutions of the constrained second-order Lagrange-Poincar\'e equations for the Lagrangian  \eqref{HatLag} subject to \eqref{Cons}. The resulting equations of motion are a set of combined third order and fourth order ordinary differential equations. 

Motivated by the examples that we study in the next section, we restrict ourself to a particular class of these control problems where we assume \textit{full controls in the base manifold $Q/G$}, that is, using Lemma \ref{lemmadec}, we consider the controlled Lagrange-Poincar\'e equations, in a local trivialization $\pi_{U}:U\times G\to U$ of the principal bundle $\pi:Q\to Q/G$, i.e. \begin{subequations}\label{control2}\begin{align}
\frac{d}{dt}\frac{\partial\mathcal{L}}{\partial\sigma^{\beta}}-\frac{\partial\mathcal{L}}{\partial\sigma^{\beta}}(C_{\gamma\beta}^{\delta}\sigma^{\gamma}-C_{\gamma\beta}^{\delta}A_{\epsilon}^{\gamma}\dot{p}^{\epsilon})&=0,\label{control2a}\\
\frac{\partial\mathcal{L}}{\partial p^{a}}-\frac{d}{dt}\frac{\partial\mathcal{L}}{\partial \dot{p}^{a}}-\frac{\partial\mathcal{L}}{\partial\sigma^{b}}(B_{ca}^{b}\dot{p}^{c}+C_{de}^{b}\sigma^{d}A_{a}^{e})&=u_{a}.\label{control2b}
\end{align} \end{subequations} In this context, the optimal control problem consists of finding a solution of the state variables and control inputs for the previous equations \eqref{control2} given boundary conditions and minimizing the cost functional  
\[
{\mathcal J}(s^{(2,1)})=\int_{0}^{T} C\left(p^{a},\dot{p}^{a}, \sigma^{a}, \frac{\partial\mathcal{L}}{\partial p^{a}}-\frac{d}{dt}\frac{\partial\mathcal{L}}{\partial \dot{p}^{a}}-\frac{\partial\mathcal{L}}{\partial\sigma^{b}}(B_{ca}^{b}\dot{p}^{c}+C_{de}^{b}\sigma^{d}A_{a}^{e})\right)\, dt.
\] 
\textcolor{blue}{Necessary conditions for optimality in} the optimal control problem \textcolor{blue}{are}  characterized by the constrained second-order variational problem determined by the second-order Lagrangian 
\begin{equation}\label{LagOpt}
\widehat{\mathcal{L}}(s^{(2,1)})=C\left(p^{a},\dot{p}^{a}, \sigma^{a}, \frac{\partial\mathcal{L}}{\partial p^{a}}-\frac{d}{dt}\frac{\partial\mathcal{L}}{\partial \dot{p}^{a}}-\frac{\partial\mathcal{L}}{\partial\sigma^{b}}(B_{ca}^{b}\dot{p}^{c}+C_{de}^{b}\sigma^{d}A_{a}^{e})\right)
\end{equation}
 subject to the second-order constraints 
\begin{equation}\label{ConstOpt}
\chi^{\alpha}(s^{(2,1)})=\frac{d}{dt}\frac{\partial\mathcal{L}}{\partial\sigma^{\beta}}-\frac{\partial\mathcal{L}}{\partial\sigma^{\beta}}(C_{\gamma\beta}^{\delta}\sigma^{\gamma}-C_{\gamma\beta}^{\delta}A_{\epsilon}^{\gamma}\dot{p}^{\epsilon})
\end{equation}
 whose solutions satisfy the constrained second-order Lagrange-Poincar\'e equations for $\widetilde{\mathcal{L}}(s^{(2,1)},\lambda_{\alpha})=\widehat{\mathcal{L}}(s^{(2,1)})+\lambda_{\alpha}\chi^{\alpha}(s^{(2,1)})$ with $\lambda_{\alpha}\in\mathbb{R}^{m}$ the Lagrange multipliers. 

Those equations are in general given by a set of fourth order nonlinear ordinary differential equations which are very difficult to solve explicitly. Thus, constructing numerical methods is in order, a task for which the results in the previous sections must be implemented. 

\begin{remark}
It is well known that, under some mild regularity conditions, necessary conditions for optimality obtained through a constrained variational principle, are equivalent to the ones given by Pontryagin Maximum Principle (see \cite{Bloch}, section $7.3$, Theorem $7.3.3$ for the proof).

For higher-order systems, the same result can be proved. In particular, in \cite{CoPM}  for unconstrained higher-order mechanical system without symmetries the equivalence between higher-order Euler-Lagrange equations and higher-order Hamilton equations was shown. It would be interesting to study such equivalence for constrained systems and the relationship with necessary conditions for optimality in optimal control problems of underactuated mechanical systems. Such results were demonstrate for nonholonomic systems in \cite{BlCoGuMdD}, where the equivalence between conditions for optimal solutions obtained by the Pontryagin Maximum Principle and as a constrained variational problem for this particular class of constraints was established. Moreover, once such equivalence for constrained systems can be understood, by using the results of \cite{GHR2011} and \cite{tomoki} the relation between optimality conditions obtained by a constrained variational principle and the ones obtained by Pontryagin Maximum principle can be extended for the class of higher-order systems with symmetries studied in this work. \hfill$\diamond$

\end{remark}

Given discretizations of \eqref{LagOpt} and \eqref{ConstOpt}, denoted $\mathcal{L}_{d}$ and $\chi_{d}^{\alpha}$ respectively, defined on $3U\times 2\,G$, with local coordinates  $\tilde a_n=\lp p_{n-2},p_{n-1},p_{n},\widetilde g_{n-2},\widetilde g_{n-1}\rp$, $\widetilde g_i:= g_i^{-1}g_{i+1}A(p_i,p_{i+1})$, $2\leq n\leq N-2$, the associated discrete optimal control problem consist of obtaining the sequences $\lc p_n\rc_{0:N}$,  $\lc \tilde{g}_n\rc_{0:N}$ and $\lc \lambda_n\rc_{0:N}$ from the second-order constrained discrete Lagrange-Poincar\'e equations, i.e. \eqref{EqHigherOrder} for $k=2$. By Theorem \ref{HOTheorem}, the discrete constrained second-order Lagrange-Poincar\'e equations are given by

\begin{align}
0=&D_{1}\mathcal{L}_{d}(\tilde a_n)+D_{2}\mathcal{L}_{d}(\tilde a_{n-1})+D_{3}\mathcal{L}_{d}(\tilde a_{n-2})+T^{*}\hat{L}_{WA_1}(n)(D_{4}\mathcal{L}_{d}(\tilde a_n)+D_{5}\mathcal{L}_{d}(\tilde a_{n-1}))\nonumber\\
&+T^{*}\hat{L}_{WA_2}(n-1)(D_{4}\mathcal{L}_{d}(\tilde a_{n-1})+D_{5}\mathcal{L}_{d}(\tilde a_{n-2}))+\lambda_{\alpha}^{n}D_{1}\chi_{d}^{\alpha}(\tilde a_n)+\nonumber\\
&+\lambda_{\alpha}^{n-2}D_{3}\chi_{d}^{\alpha}(\tilde{a}_{n-2})+T^{*}\hat{L}_{WA_1}(n)(\lambda_{\alpha}^{n}D_{4}\chi_{d}^{\alpha}(\tilde a_n)+\lambda_{\alpha}^{n-1}D_{5}\chi_{d}^{\alpha}(\tilde a_{n-1}))\nonumber\\
&+T^{*}\hat{L}_{WA_2}(n-1)(\lambda_{\alpha}^{n-1}D_{4}\chi_{d}^{\alpha}(\tilde a_{n-1})+\lambda_{\alpha}^{n-2}D_{5}\chi_{d}^{\alpha}(\tilde a_{n-2}))+\lambda_{\alpha}^{n-1}D_{2}\chi_{d}^{\alpha}(\tilde a_{n-1}),\label{equation1example}\\
0=&M_{n}-\mbox{Ad}^*_{W_{n-1}}M_{n-1}+\lambda_{\alpha}^{n}\,\varepsilon^{\alpha}_{(n,4)}+\lambda_{\alpha}^{n-1}\,\varepsilon^{\alpha}_{(n,5)}\nonumber\\
&\quad\quad\quad\quad\quad\quad-\mbox{Ad}^*_{W_{n-1}}(\lambda_{\alpha}^{n-1}\,\varepsilon^{\alpha}_{(n-1,4)}+\lambda_{\alpha}^{n-2}\,\varepsilon^{\alpha}_{(n-1,5)}),\label{equation2example}\\
0=&\chi^{\alpha}_{d}(\tilde a_n), \quad 0=\chi^{\alpha}_d(\tilde a_{n-1}),\quad 0=\chi^{\alpha}_{d}(\tilde a_{n-2}),\label{constraintsexample}
\end{align} for $2\leq n\leq N-2$, and where 
\begin{align*}
M_n=&T^*_{W_{n}}R_{W_{n}^{-1}}(T^*_{W_{n}A_{n}}R_{A_{n}^{-1}}(D_{4}\mathcal{L}_{d}(\tilde a_n)+D_{5}\mathcal{L}_{d}(\tilde a_{n-1}))),\\
\varepsilon_{(n,4)}^{\alpha}=&T^*_{W_{n}}R_{W_{n}^{-1}}(T^*_{W_{n}A_{n}}R_{A_{n}^{-1}}D_{4}\chi_d^{\alpha}(\tilde a_{n})),\\
\varepsilon_{(n,5)}^{\alpha}=&T^*_{W_{n}}R_{W_{n}^{-1}}(T^*_{W_{n}A_{n}}R_{A_{n}^{-1}}D_{5}\chi_d^{\alpha}(\tilde a_{n-1})),\\
\varepsilon_{(n-1,4)}^{\alpha}=&T^*_{W_{n-1}}R_{W_{n-1}^{-1}}(T^*_{W_{n-1}A_{n-1}}R_{A_{n-1}^{-1}}D_{4}\chi_d^{\alpha}(\tilde a_{n-1})),\\
\varepsilon_{(n-1,5)}^{\alpha}=&T^*_{W_{n-1}}R_{W_{n-1}^{-1}}(T^*_{W_{n-1}A_{n-1}}R_{A_{n-1}^{-1}}D_{5}\chi_d^{\alpha}(\tilde a_{n-2})).
\end{align*}

By Proposition \ref{proposition2} the equations given above determine (locally) the flow map for the numerical method: they indicate how to  obtain $\tilde a_n$ and $\lambda^{n}$ given $\tilde a_{n-1}$, $\tilde a_{n-2}$, $\lambda^{n-1}$, $\lambda^{n-2}$ if the matrix \begin{equation*}\label{regu}
\begin{bmatrix}
 D_{13}\widetilde{\mathcal{L}}_{d}(\tilde a_n,\lambda^{n}) &  D_{5}\left(T^{*}\widehat{L}_{(WA_1)}(n)D_4\widetilde{\mathcal{L}}_{d}(\tilde a_n,\lambda^n)\right) &M_{(1,4)}(\tilde a_n) \\
D_{3}M_n(\tilde a_n) &  D_5M_n(\tilde a_n) &\varepsilon_{(n,4)}^{\alpha}(\tilde a_n)\\
(D_{3}\chi_{d}^{\alpha}(\tilde a_n))^{T} & (D_{5}\chi_{d}^{\alpha}(\tilde a_n))^{T} & 0 \\
\end{bmatrix}
\end{equation*} 
is non singular, where $M_{(1,4)}(\tilde{a}_{n}):=D_1\chi_d^{\alpha}(\tilde a_n)+T^{*}\hat L_{(WA_1)}(n)D_{4}\chi_d^{\alpha}(\tilde a_n)$.

\subsection{Examples}
\subsubsection{Optimal control of an electron in a magnetic field}\label{ParticleEx}

We study the optimal control problem for the linear momentum and charge of
an electron of mass $m$ in a given magnetic field (see \cite{Bloch} Section $3.9$). 

One of the motivations for constructing structure preserving variational integrators for this example is that the charge is a conserved quantity and our method, since it is variational, preserves the momentum map associated with a Lie group of symmetries.

Let $\mathcal{M}$ be a $3$ dimensional Riemannian manifold and $\pi:Q\ra \mathcal{M}$ be a circle
bundle (that is, $\mathbb{S}^{1}$ acts on $Q$ on the left and then
$\pi:Q\ra \mathcal{M}$ is a principal bundle where $\mathcal{M}=Q/\mathbb{S}^{1}$) with
respect to a left $SO(2)$ action.  We will use the isomorphism (as Lie group) of $SO(2)$ and $\mathbb{S}^{1}$ to make our analysis consistent with the theory.

 Let $\mathcal{A}:TQ\ra\mathfrak{so}(2)$ be a principal connection on $Q$
and consider the Lagrangian on $TQ$ given by
$$L(q,\dot{q})=\frac{m}{2}||T\pi(q,\dot{q})||_{M}^{2}+\frac{e}{c}||\mathcal{A}(q,\dot{q})||_{\mathfrak{so}(2)}-\phi(\pi(q)),$$
where $e$ is the charge of the electron, $c$ is the speed of light,
$||\cdot||_{\mathfrak{so}(2)}:\mathfrak{so}(2)\ra\R$ the
norm on $\mathfrak{so}(2)$, given by $\|\xi\|_{\mathfrak{so}(2)}=\langle\langle\xi,\xi\rangle\rangle^{1/2} = \sqrt{\hbox{tr}(\xi^{T}\xi)}$, for any $\xi\in\mathfrak{so}(2)$ where the inner product on $\mathfrak{so}(2)$ is given by $\langle\langle\xi,\xi\rangle\rangle=\hbox{tr}(\xi^{T}\xi)$. $\phi:\mathcal{M}\ra\R$ represents the potential energy and $\cdot$ denotes the left-action of $\mathbb{S}^{1}$ on $Q$. Note that in the absence of potential, $L$ is a Kaluza-Klein Lagrangian type (see \cite{CeMaRa} for instance).

The motivation for including  a potential function in our analysis is twofold. Firstly, it is inspired by possible further applications including static obstacles in the workspace. We  use $\phi$ as a artificial potential function (for instance a Coloumb potential) to avoid the obstacle. Secondly, it is motivated by use of this example in the theory of controlled Lagrangians and potential shaping for systems with breaking symmetries. Note that here $V$ is not invariant under the symmetry group (see \cite{Bloch} Section $4.7$) for more details.

Note also that $\pi(\theta\cdot q)=\pi(q)$ for all $q\in Q$ and
$\theta\in\mathbb{S}^{1}.$ Thus

\begin{align}
L(\theta\cdot(q,\dot{q}))&=\frac{m}{2}||T\pi(\theta\cdot(q,\dot{q}))||_{M}^{2}+\frac{e}{c}||\mathcal{A}(\theta\cdot(q,\dot{q}))||_{\mathfrak{so}(2)}-\phi(\pi(\theta\cdot q))\nonumber\\
&=\frac{m}{2}||T\pi(q,\dot{q})||_{M}^{2}+\frac{e}{c}||Ad_{\theta}\cdot \mathcal{A}(q,\dot{q})||_{\mathfrak{so(2)}}-\phi(\pi(q))\nonumber\\
&=\frac{m}{2}||T\pi(q,\dot{q})||_{M}^{2}+\frac{e}{c}||\mathcal{A}(q,\dot{q})||_{\mathfrak{so(2)}}-\phi(\pi(q))\nonumber\\
&=L(q,\dot{q})\nonumber
\end{align} where Ad$_{\theta}=$Id$_{\mathfrak{so}(2)}$ because $SO(2)$ is Abelian. That is, $L$ is $SO(2)$-invariant and
we may perform Lagrange-Poincar\'e reduction by symmetries to get the equations of motion on
the principal bundle $TQ/SO(2).$

Fixing the connection $\mathcal{A}$ on $Q$, we can use the principal
connection $\mathcal{A}$ to get an isomorphism $\alpha_{\mathcal{A}}:TQ/SO(2)\ra
T\mathcal{M}\oplus\widetilde{\mathfrak{so}}(2)$ which permits us to define the reduced Lagrangian

$$\mathcal{L}(x,\dot{x},\xi)=\frac{m}{2}||\dot{x}||_{M}^{2}+\frac{e}{c}||\xi||_{\mathfrak{so}(2)}-\phi(x).$$

For the reduced Lagrangian $\ell$, the dynamics is determined by the Lagrange-Poincar\'e equations \eqref{LagPoincareGeneral}, in this particular case
\begin{align*}
m\frac{D\dot{x}^{\flat}}{Dt}+\mathbf{d}\phi&=\langle\mu,\widetilde{\mathcal{B}}(\dot{x}(t),\cdot)\rangle\nonumber\\
\frac{D}{Dt}\mu&=0\nonumber
\end{align*} where $\mu=\frac{\partial\mathcal{L}}{\partial\xi}$ is the charge of the particle. Here, $\widetilde{\mathcal{B}}:T\mathcal{M}\wedge T\mathcal{M}\ra\mathfrak{so}(2)$ is the reduced
curvature tensor associated with the connection form $\mathcal{A}$, $\mathbf{d}$ is the exterior differential and $\flat:\mathfrak{g}\to\mathfrak{g}^{*}$ is the associated isomorphisms to the inner product defined by the metric (see \cite{Bloch} and \cite{bookBullo} for instance). Note that this equation corresponds with Wong's equations \cite{CeMaRa}.

In the case where $Q=\R^{3}\times\mathbb{S}^{1}$ the Lagrangian is

$$L(x,\dot{x},\theta,\dot{\theta})=\frac{m}{2}\dot{x}^{2}+\frac{e}{c}(A(x,\dot{x})\cdot\dot{x})-\phi(x).$$
In this case, we have that $TQ/SO(2)\simeq\R^{3}\times\R$ where Ad$Q=\R$
and the reduced Lagrangian is
$$\mathcal{L}(x,\dot{x},\xi)=\frac{m}{2}\dot{x}^2+\frac{e}{c}\xi-\phi(x).$$ The
above equations reduce to type of Lorentz force law describing the
motion of a charged particle of mass $m$ in a magnetic field under the influence of a potential function
$$m\ddot{x}+\nabla\phi(x)=\frac{e}{c}(\dot{x}\times\overrightarrow{B}),\quad
\dot{\mu}=0,$$ where $\mu=\frac{\partial\ell}{\partial\xi}=\frac{e}{c}$ and $\overrightarrow{B}=(B_x, B_y, B_z)\in\mathfrak{X}(\R^{3}).$

Next, we introduce controls in our picture. Let $U\subset\R^{3}$,  where $u=(u_1,u_2,u_3)\in U$ are the control inputs. Then, given $u(t)\in U$, the controlled decoupled Lagrange-Poincar\'e system  \eqref{control1} is given by  \begin{align}
m\frac{D\dot{x}^{\flat}}{Dt}+\mathbf{d}\phi-\langle \mu,\widetilde{\mathcal{B}}(\dot{x}(t),\cdot)\rangle&=u(t),\nonumber\\
\frac{D}{Dt}\mu&=0\nonumber.
\end{align} If $Q=\R^{3}\times\mathbb{S}^{1}$ then the above system becomes the controlled decoupled Lagrange-Poincar\'e system describing the controlled dynamics of a charged particle of mass $m$ in a magnetic field under the influence of a potential function:

\begin{align}
m\ddot{x}+\nabla\phi(x)-\frac{e}{c}(\dot{x}\times\overrightarrow{B})&=u(t)\nonumber\\
\dot{\mu}&=0.\nonumber
\end{align}

The optimal control problem consists
of finding trajectories of the state variables and controls inputs,
satisfying the previous equations subject to given initial and final conditions
and minimizing the cost functional,
$$\min_{(x, \dot{x}, \xi,u)}\int_{0}^{T}C(x, \dot{x}, \xi,u)dt=\min_{(x, \dot{x}, \xi,u)}\frac{1}{2}\int_{0}^{T}||u||^{2}\,dt$$ where the norm $||\cdot||$ represents the Euclidean norm on $\mathbb{R}^{3}$.

This optimal control problem is equivalent to solving the following
constrained second-order variational problem given
by  
\begin{equation}\label{LagrangianParticle}
\min_{(x, \dot{x},\ddot{x}, \xi,\dot{\xi})}\widehat{\mathcal{L}}(x, \dot{x},\ddot{x}, \xi,\dot{\xi})=\frac{1}{2}\left|\left|m\ddot{x}+\nabla\phi(x)-\frac{e}{c}(\dot{x}\times\overrightarrow{B})\right|\right|^{2},
\end{equation} subject to the constraint
$\chi(x, \dot{x},\ddot{x}, \xi,\dot{\xi})=\frac{e}{c}$ arrising from $\dot{\mu}=0$, with $\widehat{\mathcal{L}}:3\mathbb{R}^{3}\times 2\mathbb{R}\to\mathbb{R}$ and $\chi:3\mathbb{R}^{3}\times 2\mathbb{R}\to\mathbb{R}$ (note that $TQ/SO(2)\simeq\R^{3}\times\R$ where Ad$Q=\R$). 

For a simple exposition of the resulting equations describing necessary conditions for optimality in the optimal control problem, we restrict our analysis to the particular case when the magnetic field is aligned with the $x_3$-direction and orthogonal to the $x_1-x_2$ plane, that is, $\overrightarrow{B}=(0,0,B_z)$ with $B_z$ constant, and the potential field is quadratic $\phi=(x_1^2+x_2^2+x_3^2)$.

The constrained second-order Lagrange-Poincar\'e equations  are 
\begin{subequations}\label{Particle}
 \begin{align}
x_1^{(iv)}&=2\omega\dddot{x}_2+\ddot{x}_{1}\left(\omega^2-\frac{4}{m}\right)+\frac{4\omega}{m}\dot{x}_2-\frac{4x_1}{m^2},\label{Particle:a}\\
x_2^{(iv)}&=-2\omega\dddot{x}_{1}+\ddot{x}_{2}\left(\omega^2-\frac{4}{m}\right)-\frac{4\omega}{m}\dot{x}_{1}-\frac{4}{m^2}x_2,\label{Particle:b}\\
x_3^{(iv)}&=-\frac{4\ddot{x}_3}{m}-\frac{4x_3}{m^2},\label{Particle:c}
\end{align}
\end{subequations} 
where $\omega=\frac{eB_z}{mc}$, $\lambda(t)$ is constant and $\xi(t)=\frac{e}{c}$. $\xi(t)$ comes from the the constraint given by preservation of the charge and $\lambda(t)$  is obtained from  $\frac{e}{c}\dot{\lambda}=0$, the Lagrange-Poincar\'e equation arising from $\xi(t)$ (note that we obtain the same result for the multiplier as in \cite{Bloch} Section $7.5$.)

In terms of the discretization of this system as presented in Section \ref{Discretization}, we need to define the discrete connection \eqref{DiscConnec}, but given that the bundle is trivial, the connection vanishes. Denoting by $(x_n,\xi_n)=(x_n^1,x_n^2,x_n^3,\xi_n,\xi_{n+1})$, the discrete second order Lagrangian for the reduced optimal control problem corresponding to \eqref{LagrangianParticle}, is given by

\begin{align*}
\widehat{\mathcal{L}}_d(x_n,x_{n+1},x_{n+2},\xi_n)&=\frac{h}{2}\Big{|}\Big{|}\frac{x_{n+2}-2x_{n+1}+x_n}{h^2}+\nabla\phi(x_n)\\&\qquad\qquad\qquad\qquad\qquad-\frac{e}{c}\frac{x_{n+1}-x_n}{h}\times \overrightarrow{B}(x_n)\Big{|}\Big{|}^2,\\
\chi_d(x_n,x_{n+1},x_{n+2},\xi_n)&=\frac{e}{c},
\end{align*} 
where $\overrightarrow{B}(x_n):=(B_1(x_n^{1}), B_2(x_n^{2}), B_3(x_n^{3}))$.

By Theorem \ref{HOTheorem} the discrete second-order constrained Lagrange-Poincare equations giving rise to the integrator which approximates the necessary conditions for optimality in the optimal control problem are given by
\begin{subequations}\label{DiscParticle}
\begin{align}
\frac{x_{n+2}^1-4x_{n+1}^1+6x_{n}^1-4x_{n-1}^1+x_{n-2}^1}{h^4}&=2\omega\frac{x_{n+1}^2-3x_n^2+3x_{n-1}^2-x_{n-2}^2}{h^3}\label{DiscParticle:a}\\
&\quad\quad+\lp\omega^2-\frac{4}{m}\rp\frac{x_{n+1}^1-2x_n^1+x_{n-1}^1}{h^2}\nonumber\\
&\quad\quad\quad\quad\quad\quad\quad+\frac{4\omega}{m}\frac{x_n^2-x_{n-1}^2}{h}-\frac{4}{m^2}x_n^1,\nonumber\\
\frac{x_{n+2}^2-4x_{n+1}^2+6x_{n}^2-4x_{n-1}^2+x_{n-2}^2}{h^4}&=-2\omega\frac{x_{n+1}^1-3x_n^1+3x_{n-1}^1-x_{n-2}^1}{h^3}\label{DiscParticle:b}\\
&\quad\quad\quad+\lp\omega^2-\frac{4}{m}\rp\frac{x_{n+1}^2-2x_n^2+x_{n-1}^2}{h^2}\nonumber\\
&\quad\quad\quad\quad\quad\quad\quad -\frac{4\omega}{m}\frac{x_n^1-x_{n-1}^1}{h}-\frac{4}{m^2}x_n^2,\nonumber
\\
\frac{x_{n+2}^3-4x_{n+1}^3+6x_{n}^3-4x_{n-1}^3+x_{n-2}^3}{h^4}&=-\frac{4}{m}\frac{x_{n+2}^3-2x_n^3+x_{n-1}^3}{h^2}-\frac{4}{m^2}x_n^3,\label{DiscParticle:c}
\end{align}
\end{subequations} 
together with $\xi_{n}=\xi_{n-1}=\xi_{n-2}=\frac{e}{c}$,  and  $\lambda_{n}=\lambda_{n-1}$ for $n=2,\ldots,N-2$. We observe that \eqref{DiscParticle:a}, \eqref{DiscParticle:b} and \eqref{DiscParticle:c} are a discretization in finite differences of \eqref{Particle:a}, \eqref{Particle:b} and \eqref{Particle:c}, respectively.\footnote{Considering the forward difference $\frac{x_{n+1}-x_n}{h}$ as a first order discretization of the velocity $\dot x$,  it is straightforward to check that
\begin{align*}
\frac{x_{n+2}-2x_{n+1}+x_{n}}{h^2},&\\
\frac{x_{n+3}-3x_{n+2}+3x_{n+1}-x_n}{h^3},&\\
\frac{x_{n+4}-4x_{n+3}+6x_{n+2}-4x_{n+1}+x_{n}}{h^4}&,\\
\end{align*}
are discretization of $\ddot x$, $\dddot x$ and $x^{(iv)}$, respectively. The shift of the $n$ index present in equations \eqref{DiscParticle} comes from the particular expression of the discrete constrained HO Lagrange-Poincar\'e equations provided in Theorem \ref{HOTheorem}.}

\subsubsection{Energy minimum control of two coupled rigid bodies:}

We consider a discretization of the energy minimum control for the motion planning of an underactuated system composed by two planar rigid bodies attached at their center of mass and moving freely in the plane, also known in the literature as Elroy's beanie (see \cite{lewis}, \cite{Ostrowski} for details) which is  an example of a dynamical system with a non-Abelian Lie group of symmetries. 

The configuration space is $Q=SE(2)\times \mathbb{S}^1$ with local coordinates  denoted by $(x,y,\theta,\psi)$. The Lagrangian function $L:TQ\Flder\R$ is given by
\begin{equation}\label{LagElBean}
L(x,y,\theta,\psi,\dot x,\dot y,\dot\theta,\dot\psi)=\frac{1}{2}m\,(\dot x^2+\dot y^2)+\frac{1}{2}\,I_1\,\dot\theta^2+\frac{1}{2}\,I_2\,(\dot\theta+\dot\psi)^2-V(\psi),
\end{equation}
where $m$ denotes the mass of the system, $I_1$ and $I_2$ are the inertias of the first and the second body,
respectively, and $V$ is the potential energy. Note that the system is invariant under $SE(2)$. After choosing a decomposition determined by the metric on $Q$ which describes the kinetic energy of the Lagrangian \eqref{LagElBean}  (see \cite{lewis} and \cite{Ostrowski} for details) one can fix a connection $\cone:T(SE(2)\times  \mathbb{S}^1)\Flder \se$, with local expression
\begin{equation}\label{ContConneExam}
\cone=\begin{bmatrix}
1&0&y&\frac{I_2}{I_1+I_2}\,y\\
0&1&-x&-\frac{I_2}{I_1+I_2}\,x\\
0&0&1&\frac{I_2}{I_1+I_2}
\end{bmatrix}, 
\end{equation}
and vanishing curvature. 

Consider the base of $\mathfrak{se}(2)$ denoted by $\{\bar{e}_{a}\}$ with $a=1,2,3$ and given by 
 $$ \bar{e}_1=\begin{bmatrix}
0&0&1\\
0&0&0\\
0&0&0
\end{bmatrix},\,\,\bar{e}_2=\begin{bmatrix}
0&0&0\\
0&0&1\\
0&0&0
\end{bmatrix},\,\, \bar{e}_3=\begin{bmatrix}
0&1&0\\
-1&0&0\\
0&0&0
\end{bmatrix}.$$ In terms of this basis, $\xi\in\mathfrak{se}(2)$ can be written as $\xi=\xi^{1}\bar{e}_1+\xi^{2}\bar{e}_{2}+\xi^{3}\bar{e}_{3}$ with $\xi^{1}=\cos\theta\dot{x}+\sin\theta\dot{y}$, $\xi^{2}=\sin\theta\dot{x}+\cos\theta\dot{y}$ and $\xi^{3}=-\dot{\theta}-\frac{I_2}{I_1+I_2}\dot{\psi}$. Moreover, since $ 
[\bar{e}_1, \bar{e}_2]=0$, $[\bar{e}_1,\bar{e}_3]=\bar{e}_2$ and $[\bar{e}_2, \bar{e}_3]=-\bar{e}_1$, then the non-vanishing constant structures of the Lie algebra $\mathfrak{se}(2)$ are $C_{13}^{2}=C_{32}^{1}=1$ and $C_{31}^{2}=C_{23}^{1}=-1$.

The isomorphism \eqref{IsomorphismCont}, $\alpha_{\mathcal{A}}:T(SE(2)\times \mathbb{S}^{1})/SE(2)\to T\mathbb{S}^{1}\times\widetilde{\mathfrak{se}}(2)$ is  $$\alpha_{\mathcal{A}}(\psi,\dot{\psi},\xi)=(\psi,\dot{\psi},\xi+A_{e}(\psi)\dot{\psi}),$$ with $A_{e}(\psi)=(0, 0, \frac{I_{2}}{I_1+I_2})^{T}$ ($A_{e}:U\subset \mathbb{S}^{1} \to\mathfrak{se}(2)$ is a $1$-form determined by $A_{e}(\psi)\dot{\psi}=\mathcal{A}(\psi,e,\dot{\psi},0)$). $A_e$ is locally prescribed by the coefficients $A_1^{1}=A_{1}^{2}=0$ and $A_1^{3}=\frac{I_2}{I_1+I_2}$.

The reduced Lagragian (see \cite{lewis, Ostrowski} for details) $\mathcal{L}:T\mathbb{S}^1\oplus\widetilde{\mathfrak{se}}(2)\Flder\R$ is given by
\begin{equation}\label{RedContLag}
\mathcal{L}(\psi,\dot\psi,\Omega)=\frac{1}{2}m(\Omega_1^2+\Omega_2^2)+\frac{1}{2}(I_1+I_2)\,\Omega_3^2+\frac{1}{2}\frac{I_1I_2}{I_1+I_2}\dot\psi^2-V(\psi),
\end{equation}
where $(\psi,\dot\psi)$ are local coordinates for $T\mathbb{S}^1$ and $\Omega$ for $\widetilde{\mathfrak{se}}(2)$, such that $\Omega^1=\xi^1$, $\Omega^2=\xi^2$ and $\Omega^3=\xi^3+\frac{I_2}{I_1+I_2}\dot\psi$, for $\xi\in\se$, and consequently we observe that 
\begin{subequations}\label{Omegas}
\begin{align}
\Omega^1=&\cos{\theta}\,\dot x+\sin{\theta}\,\dot y,\\
\Omega^2=&-\sin{\theta}\,\dot x+\cos{\theta}\,\dot y,\\
\Omega^3=&-\dot\theta.
\end{align}
\end{subequations}

The local Lagrange-Poincar\'e equations \eqref{LagPoinLocal} in the $(\psi,\dot\psi,\Omega)$ coordinates read
\begin{subequations}\label{RedEoM}
\begin{align}
\dot\Omega^1=&\Omega^2\lp\,\Omega^3-\frac{I_2}{I_1+I_2}\,\dot\psi\rp,\label{RedEoMa}\\
\dot\Omega^2=&-\Omega^1\lp\,\Omega^3-\frac{I_2}{I_1+I_2}\,\dot\psi\rp,\label{RedEoMb}\\
\dot\Omega^3=&0,\label{RedEoMc}\\
\frac{I_1I_2}{I_1+I_2}\,\ddot\psi=&-\frac{\partial V}{\partial\psi}.\label{RedEoMd}
\end{align}
\end{subequations}

Now, we introduce a control input in the equation corresponding to $\mathbb{S}^{1}$, i.e. \eqref{RedEoMd}, namely 
\begin{equation}\label{controlled}
\frac{I_1I_2}{I_1+I_2}\,\ddot\psi=-\der_{\psi}V(\psi)+u.
\end{equation}
As discussed in the previous subsection, the optimal control problem consists of finding trajectories of the state variables and control inputs, satisfying the  equations \eqref{RedEoMa}, \eqref{RedEoMb}, \eqref{RedEoMc} and \eqref{controlled}, subject to boundary conditions and minimizing the cost functional $\int_0^TC(\psi,\dot\psi,\Omega,u)dt.$ In particular we are interested in energy-minimum problems, where the cost function is of the form $\displaystyle{
C(\psi,\dot\psi,\Omega,u)=\frac{1}{2}u^2.}$
This optimal control problem is equivalent to solving the constrained second-order variational problem defined by the Lagrangian $\widehat{ \mathcal{L}}:T^{(2)}\mathbb{S}^{1}\oplus2\widetilde{\mathfrak{se}}(2)\Flder\R$ and the constraints $\chi^{\alpha}:T^{(2)}\mathbb{S}^{1}\oplus2\widetilde{\mathfrak{se}}(2)\Flder\R$, $\alpha=1,2,3$, defined by

$\widehat{ \mathcal{L}}(\gamma)=\frac{1}{2}\lp\frac{I_1I_2}{I_1+I_2}\,\ddot\psi+\der_{\psi}V(\psi)\rp^2$, $\displaystyle{ 
\chi^1(\gamma)=\dot\Omega^1-\Omega^2\Omega^3+\frac{I_2}{I_1+I_2}\,\dot\psi\,\Omega^2}$, $\chi^2(\gamma)=\dot\Omega^2+\Omega^1\Omega^3-\frac{I_2}{I_1+I_2}\,\dot\psi\,\Omega^1$ and 
$\chi^3(\gamma)=\dot\Omega^3$, where $\gamma=(\psi,\dot\psi,\ddot\psi,\Omega,\dot\Omega)$, $(\psi,\dot\psi,\ddot\psi)$ are local coordinates for $T^{(2)}\mathbb{S}^{1}$ and $(\Omega, \dot\Omega)$ for $2\widetilde{\mathfrak{se}}(2)$. Next, define the augmented Lagrangian $\widetilde{\mathcal{L}}(\psi,\dot\psi,\ddot\psi,\Omega,\dot\Omega)=\widehat{\mathcal{L}}(\psi,\dot\psi,\ddot\psi,\Omega,\dot\Omega)+\lambda_{\alpha}\chi^{\alpha}(\psi,\dot\psi,\ddot\psi,\Omega,\dot\Omega)$, that is, \begin{align}\label{AugmSecOrdeLag}
\widetilde{\mathcal{L}}(\psi,\dot\psi,\ddot\psi,\Omega,\dot\Omega,\lambda_1,\lambda_2,\lambda_3)=&\frac{1}{2}\lp\frac{I_1I_2}{I_1+I_2}\,\ddot\psi+\der_{\psi}V(\psi)\rp^2
\\&+\lambda_1\lp\dot\Omega_1-\Omega^2\Omega^3+\frac{I_2}{I_1+I_2}\,\dot\psi\,\Omega^2\rp\nonumber\\
&\quad\quad\quad\quad+\lambda_2\lp\dot\Omega^2+\Omega^1\Omega^3-\frac{I_2}{I_1+I_2}\,\dot\psi\,\Omega^1\rp+\lambda_3\dot\Omega^3.
\end{align}

The constrained second-order Lagrange-Poincar\'e equations determining necessary conditions for the optimal control problem (given in Remark \ref{remarklocal2}) are the following fourth-order nonlinear system of equations:

\begin{subequations}\label{BeanieCont}
\begin{align}
\frac{I_1I_2}{I_1+I_2}\psi^{(iv)}=&\lambda_1\left(\Omega^1\Omega^3+\dot{\Omega}^{2}-\Omega^{1}\dot{\psi}\frac{I_2}{I_1+I_2}\right)+\lambda_2\left(\Omega^2\Omega^3-\dot{\Omega}^{1}-\Omega^{2}\dot{\psi}\frac{I_2}{I_1+I_2}\right)\\
\nonumber
&-\frac{d^{2}}{dt^{2}}\left(\frac{\partial V}{\partial\psi}\right)-\frac{(I_1+I_2)}{I_2}\frac{\partial^2 V}{\partial\psi^2}\left(\frac{I_1I_2}{I_1+I_2}\ddot{\psi}+\frac{\partial V}{\partial\psi}\right),\\
\ddot{\lambda}_{1}=&\lambda_2\left(\dot{\Omega}^{3}-\frac{I_2\ddot{\psi}}{I_1+I_2}\right)+\frac{\lambda_1I_2}{I_1+I_2}\left(2\dot{\psi}\Omega^3-(\Omega^3)^2-\frac{\dot{\psi}I_2}{I_1+I_2}\right),\\
\ddot{\lambda}_2=&\lambda_1\left(\frac{\ddot{\psi}I_2}{I_1+I_2}-\dot{\Omega}^{3}\right)+\frac{\dot{\lambda}_1\dot{\psi}I_2(1-\lambda_2)}{I_1+I_2}-\lambda_2\left((\Omega^3)^2-\frac{I_2}{I_1+I_2}\dot{\psi}\dot{\Omega}^3\right)\\\nonumber
&\quad\quad\quad\quad\quad\quad\quad\quad\quad\quad\quad\quad\quad\quad\quad\quad\quad+\frac{\lambda_2^2\dot{\psi}}{I_1+I_2}\left(\Omega^3-\frac{I_2\dot{\psi}}{I_1+I_2}\right),\\
\ddot{\lambda}_{3}=&\Omega^2(\dot{\lambda}_2-2\dot{\lambda}_1)-\lambda_1\dot{\Omega}^2+\lambda_1\dot{\Omega}^{1}+\dot{\lambda}_{2}\Omega^1+(\lambda_1\Omega^1\\&+\lambda_2\Omega^2)\left(\Omega_3-\frac{I_2}{I_1+I_2}\dot{\psi}\right)\\
\dot\Omega^1=&\,\,\,\,\,\,\,\Omega^2\Omega^3-\frac{I_2}{I_1+I_2}\,\dot\psi\,\Omega^2,\\
\dot\Omega^2=&-\Omega^1\Omega^3+\frac{I_2}{I_1+I_2}\,\dot\psi\,\Omega^1,\\
\dot\Omega^3=&\,\,\,\,\,\,\,0,
\end{align}
\end{subequations}

In terms of the discretization of this system as presented in Section \ref{Discretization}, we need to define the discrete connection \eqref{DiscConnec} $\dcone:(SE(2)\times \mathbb{S}^{1})\times (SE(2)\times \mathbb{S}^{1})\Flder SE(2)$, which should satisfy $\dcone((g_n,\psi_n),(g_{n+1},\psi_{n+1}))=g_{n+1}A(\psi_n,\psi_{n+1})g_n^{-1}$ according to \eqref{DiscConeTrivialChart} for $g_n,g_{n+1}\in SE(2)$ and $\psi_n,\psi_{n+1}\in U\subset (SE(2)\times \mathbb{S}^{1})/SE(2)\cong \mathbb{S}^{1}$ . The local expression of the discrete connection is given by
\begin{equation}\label{LocalConne}
A(\psi_n,\psi_{n+1})=\begin{bmatrix}
\cos{\lp\frac{I_2}{I_1+I_2}\Delta\psi_n\rp}&-\sin{\lp\frac{I_2}{I_1+I_2}\Delta\psi_n\rp}&0\\
\sin{\lp\frac{I_2}{I_1+I_2}\Delta\psi_n\rp}&\cos{\lp\frac{I_2}{I_1+I_2}\Delta\psi_n\rp}&0\\
0&0&1
\end{bmatrix},
\end{equation}
with $\Delta\psi_n=\psi_{n+1}-\psi_n$. We denote $A_n=A(\psi_n,\psi_{n+1}).$ The reduced discrete Lagrangian $\mathcal{L}_d:\mathbb{S}^{1}\times \mathbb{S}^{1}\times  \widetilde{SE}(2)\Flder\R$ is locally defined by the coordinates $(\psi_n,\psi_{n+1},\widetilde g_n)$, where $\widetilde g_n=W_n\,A_n$, with $W_n=g_n^{-1}g_{n+1}$. According to  \eqref{LocalConne}
\[
\widetilde g_n =\begin{bmatrix}
\cos{\Delta\varphi_n} &-\sin{\Delta\varphi_n}& \cos{\theta_n}\Delta x_n+\sin{\theta_n}\Delta y_n\\
\sin{\Delta\varphi_n} &\cos{\Delta\varphi_n}& -\sin{\theta_n}\Delta x_n+\cos{\theta_n}\Delta y_n\\
0&0&1
\end{bmatrix},
\]
where $\Delta\varphi_n=\Delta\theta_n+\frac{I_2}{I_1+I_2}\Delta\psi_n$, $\Delta\theta_n=\theta_{n+1}-\theta_n$, $\Delta x_n=x_{n+1}-x_n$ and $\Delta y_n=y_{n+1}-y_n$. Establishing
\begin{align*}
\Omega_1^n=&\,\,\,\,\,\,\,\,\cos{\theta_n}\,(\Delta x_n/h)+\sin{\theta_n}\,(\Delta y_n/h),\\
\Omega_2^n=&\,\,-\sin{\theta_n}\,(\Delta x_n/h)+\cos{\theta_n}\,(\Delta y_n/h),\\
\Omega_3^n=&\,\,-(\Delta\theta_n/h)
\end{align*}
which represent a discretization of \eqref{Omegas} where $h$ is the time-step of the integrator, we obtain that
\begin{equation}\label{tildeg}
\widetilde g_n=\begin{bmatrix}
\cos{\lp h(\Omega_3^n-\frac{I_2}{I_1+I_2}\frac{\Delta\psi_n}{h})\rp}&\sin{\lp h(\Omega_3^n-\frac{I_2}{I_1+I_2}\frac{\Delta\psi_n}{h})\rp}&h\,\Omega_1^n\\
-\sin{\lp h(\Omega_3^n-\frac{I_2}{I_1+I_2}\frac{\Delta\psi_n}{h})\rp}&\cos{\lp h(\Omega_3^n-\frac{I_2}{I_1+I_2}\frac{\Delta\psi_n}{h})\rp}&h\,\Omega_2^n\\
0&0&1
\end{bmatrix},
\end{equation}
and it follows that $\widetilde g_n$ is completely determined by $(\Omega_1^n,\Omega_2^n,\Omega_3^n)$ after fixing $\Delta\psi_n$. Therefore, the discrete Lagrangian $\mathcal{L}_d:\mathbb{S}^{1}\times \mathbb{S}^{1}\times  \widetilde{SE}(2)\Flder\R$ is given by
\begin{equation}\label{DiscLagrangian}
\mathcal{L}_d(a_n)=\frac{1}{2}mh((\Omega_1^n)^2+(\Omega_2^n)^2)+\frac{1}{2}(I_1+I_2)h\,(\Omega_3^n)^2+\frac{1}{2}\frac{I_1I_2}{I_1+I_2}\frac{(\Delta\psi_n)^2}{h}-hV(\psi_{n+1}),
\end{equation} where $a_n=(\psi_n,\psi_{n+1},\Omega_1^n,\Omega_2^n,\Omega_3^n)$.

Considering a discretization of the action integral $\int_0^h\mathcal{L}\,dt$ determined by \eqref{RedContLag}  with  local truncation error of first order, the discrete equations of motion \eqref{DEoM} read 
\begin{subequations}\label{Integrator1}
\begin{align}
\Omega_1^n&=\Omega_1^{n-1}+h\Omega^{n-1}_2\lp\Omega_3^{n-1}-\frac{I_2}{I_1+I_2}\frac{\Delta\psi_{n-1}}{h}\rp,\label{Integrator1a}\\
\Omega_2^n&=\Omega_2^{n-1}-h\Omega^{n-1}_1\lp\Omega_3^{n-1}-\frac{I_2}{I_1+I_2}\frac{\Delta\psi_{n-1}}{h}\rp,\label{Integrator1b}\\
\Omega_3^n&=\Omega_3^{n-1},\label{Integrator1c}\\
\frac{I_1I_2}{I_1+I_2}\frac{\psi_{n+1}-2\psi_n+\psi_{n-1}}{h}&=-h\,\der_{\psi} V(\psi_n)\label{Integrator1d},
\end{align}
\end{subequations}
for the discrete Lagrangian \eqref{DiscLagrangian}, where we we have used the expressions for the local discrete connection \eqref{LocalConne}, $\widetilde g_n$ \eqref{tildeg}, and we neglected higher-order terms of the time step $O(h^2)$ (equations \eqref{Integrator1a},\eqref{Integrator1b} and \eqref{Integrator1c} follow from the second equation in \eqref{DEoM}, while \eqref{Integrator1d} follows from the first one). It is easy to check that \eqref{Integrator1}  is a discretization  in finite differences of \eqref{RedEoM}, (see footnote $^1$).

\begin{remark}
A different discretization of the potential $V$ in \eqref{DiscLagrangian}, for instance $$\displaystyle{h\,V\lp\frac{\psi_{n+1}+\psi_n}{2}\rp} \hbox{ or } \displaystyle{\frac{h}{2}V(\psi_{n+1})+\frac{h}{2}V(\psi_{n})},$$  would lead to a second-order discretization of \eqref{Integrator1d} with respect to \eqref{RedEoMd}. However, the local truncation error of \eqref{Integrator1} with respect to \eqref{RedEoM} does not change, since the order of the $\Omega$ equations remains the same. It seems strange to use asymmetric, $O(h)$ approximations when this is not necessary as we used in the previous example. This is a phenomenon due to the ``decoupling'' of the variables $\psi$ and $\Omega$ in equation \eqref{Integrator1d}, which allows to enhance its local truncation error via the appropriate discretization of the discrete Lagrangian. However, the overall $L_d$ is $O(h)$, and therefore one expects the same order of the integrator.

\hfill$\diamond$
\end{remark}

The discrete second-order augmented Lagrangian $\tilde{\mathcal{L}}_d:\mathbb{S}^{1}\times \mathbb{S}^{1}\times \mathbb{S}^{1}\times 2\widetilde {SE}(2)\times\R^3\Flder\R$ for the constrained higher-order variational problem is given by
\begin{equation}\label{AugmSecOrdeDiscLag}
\begin{split}
\tilde{\mathcal{L}}_d(\tilde{a}_{n},\lambda^n)=&
\frac{h}{2}\lp\frac{I_1I_2}{I_1+I_2}\,\frac{\psi_{n+2}-2\psi_{n+1}+\psi_{n}}{h^2}+\der_{\psi}V(\psi_{n+1})\rp^2\\
&\,\,+\lambda_3^n\,\Delta\Omega_3^n+\lambda_1^n\lp\Delta\Omega_1^n-h\Omega_2^n\Omega_3^n+\frac{I_2}{I_1+I_2}\,\Delta\psi_n\,\Omega_2^n\rp\\
&\quad\quad\quad\quad\quad\quad\quad+\lambda_2^n\lp\Delta\Omega_2^n+h\Omega_1^n\Omega_3^n-\frac{I_2}{I_1+I_2}\,\Delta\psi_n\,\Omega_1^n\rp,
\end{split}
\end{equation}
with $\tilde{a}_{n}=(\psi_n,\psi_{n+1},\psi_{n+2},\Omega^n,\Omega^{n+1})$, $\Omega^n=(\Omega_1^n,\Omega_2^n, \Omega_3^n)$, $\lambda^n=(\lambda_1^n,\lambda_2^n,\lambda_3^n)$, and with  $\Delta\Omega_i^n=\Omega_i^{n+1}-\Omega_i^{n}$. Here, to define $\widetilde{\mathcal{L}}_{d}$ we used
\begin{align}
\widehat{\mathcal{L}}_{d}(\tilde{a}_n)=&\frac{h}{2}\lp\frac{I_1I_2}{I_1+I_2}\,\frac{\psi_{n+2}-2\psi_{n+1}+\psi_{n}}{h^2}+\der_{\psi}V(\psi_{n+1})\rp^2\label{lagrangiandiscreteexamle}\\
\chi_d^{1}(\tilde{a}_{n})=&\Delta\Omega_1^n-h\Omega_2^n\Omega_3^n+\frac{I_2}{I_1+I_2}\,\Delta\psi_n\,\Omega_2^n\label{chi1}\\
\chi_d^{2}(\tilde{a}_{n})=&\Delta\Omega_2^n+h\Omega_1^n\Omega_3^n-\frac{I_2}{I_1+I_2}\,\Delta\psi_n\,\Omega_1^n\label{chi2}\\
\chi_d^{3}(\tilde{a}_{n})=&\Delta\Omega_3^n\label{chi3}
\end{align}

The discrete constrained second-order Lagrange-Poincare equations giving rise to the variational integrator to approximate the necessary conditions for optimality in the optimal control problem are given by equations \eqref{equation1example}, \eqref{equation2example}, \eqref{constraintsexample} applied to the discrete second-order augmented Lagrangian $\widetilde{\mathcal{L}}_d$ \eqref{AugmSecOrdeDiscLag} where the partial derivatives of $\widehat{\mathcal{L}}_d$ and $\chi_d^{\alpha}$, $\alpha=1,2,3$ follows easily from equations \eqref{lagrangiandiscreteexamle}-\eqref{chi3} and are understood as row vectors. The operators $T^{*}\hat{L}_{(WA_1)}$ and $T^{*}\hat{L}_{(WA_2)}$ can be computed using the tangent lift of left translations as in equation \eqref{Aoperator}, $M_n=[0,\, 0,\, 0]$, and the quantities $\epsilon^{\alpha}_{(n,4)}$, $\epsilon^{\alpha}_{(n,5)}$, $\epsilon^{\alpha}_{(n-1,4)}$, $\epsilon^{\alpha}_{(n-1,5)}$ are given as follow 

\begin{align*}
\epsilon^{1}_{(n,4)}=&[-\cos(h\vartheta_n)+h\vartheta_n\sin(h\vartheta_n),\,\sin(h\vartheta_n)+h\vartheta_n\cos(h\vartheta_n),\,h\Omega_1^n\cos(h\vartheta_n)\\
&\qquad\qquad+h\Omega_2^n\sin(h\vartheta_n)-h\vartheta_n(h\Omega_1^n\sin(h\vartheta_n)+h\Omega_2^n\cos(h\vartheta_n))-h\Omega_2^n],\\
\epsilon^{2}_{(n,4)}=&[-\sin(h\vartheta_n)+h\vartheta_n\cos(h\vartheta_n),\,-\cos(h\vartheta_n)-h\vartheta_n\sin(h\vartheta_n),\,h\Omega_1^n\sin(h\vartheta_n)\\
&\qquad\qquad+h\Omega_2^n\cos(h\vartheta_n)+h\vartheta_n(h\Omega_2^n\sin(h\vartheta_n)-h\Omega_1^n\cos(h\vartheta_n))+h\Omega_1^n],\\
\epsilon^{3}_{(n,4)}=&\epsilon^{3}_{(n-1,4)}=[0,0,1],\\
\epsilon^{1}_{(n,5)}=&[\cos(h\vartheta_n),\,-\sin(h\vartheta_n),\,-h\Omega_1^n\cos(h\vartheta_n)+h\Omega_2^n\sin(h\vartheta_n)],\\
\epsilon^{2}_{(n,5)}=&[\sin(h\vartheta_n),\,\cos(h\vartheta_n),\,-h\Omega_1^n\sin(h\vartheta_n)-h\Omega_2^n\cos(h\vartheta_n)],\\
\epsilon^{1}_{(n-1,4)}=&[\cos(h\vartheta_{n-1}),\,-\sin(h\vartheta_{n-1}),\,-h\Omega_1^{n-1}\cos(h\vartheta_{n-1})+h\Omega_2^{n-1}\sin(h\vartheta_{n-1})],\\
\epsilon^{2}_{(n-1,4)}=&[\sin(h\vartheta_{n-1}),\,\cos(h\vartheta_{n-1}),\,-h\Omega_1^{n-1}\sin(h\vartheta_{n-1})-h\Omega_2^{n-1}\cos(h\vartheta_{n-1})],\\
\epsilon^{\alpha}_{(n-1,5)}=&[0,\,0,\,0],\quad\alpha=1,2,3.
\end{align*}{where we have used that, $\vartheta_n=\Omega_3^n-(\frac{I_2}{I_1+I_2})\frac{\Delta\psi_n}{h}$,  $$\tilde{g}_{n}^{-1}=A_n^{-1}W_n^{-1}=\left[\begin{array}{cc}
   R_{-h\vartheta_n} & -R_{-h\vartheta_n}hv_n \\
   0 & 1 \\
  \end{array}\right],$$ $v_n=[\Omega_1^n,\Omega_2^n]^{T}$ and $R_{h\vartheta_n}=\left[\begin{array}{cc}
   \cos(h\vartheta_n) & \sin(h\vartheta_n) \\
   -\sin(h\vartheta_n) & \cos(h\vartheta_n) \\
  \end{array}\right]$.

Equations \eqref{equation1example}-\eqref{constraintsexample} are used to update the current state $(\tilde a_{n-1}, \tilde a_{n-2}, \lambda^{n-1}, \lambda^{n-2})$  to obtain the next state $(\tilde a_{n}, \tilde a_{n-1}, \lambda^{n}, \lambda^{n-1})$. This is accomplished by solving the dynamics \eqref{equation1example}-\eqref{constraintsexample} with boundary conditions satisfying the constraints \eqref{constraintsexample} using a root-finding algorithm such as Newton's method in terms of the unknowns $(\tilde a_{n}, \lambda^{n})$ to obtain the next configuration.
Note that as in the example in Section \ref{ParticleEx}, the  discrete constrained second-order Lagrange-Poincar\'e equations \eqref{equation1example}- \eqref{constraintsexample} applied to \eqref{AugmSecOrdeDiscLag} represent a discretization in finite differences of \eqref{BeanieCont} (a)-(e). 

It would be interesting to study how indirect optimization methods for optimal control can be developed to implement the equations of motion. One of the main
challenges here is the use of a shooting method to solve the two points boundary
value problem. We believe that, due to the complexity of the equations, one should use multiple shooting instead of a single one, and/or add a final state to our cost functional in order to achieve the desired final configurations. In terms of the integration schemes for Lagrange-Poincare equations, it would be interesting if the variational integrators are employed on particular examples. For instance, in the case of an electron in the magnetic field, the potential function can be used to partially break the symmetry. Then, the extension of the variational integrators presented in this work for symmetry-breaking Lagrange-Poincare systems can be studied independently and applied to a concrete example of interest in physics.

Note also that a particular construction and study for the exact discrete Lagrangian associated to higher-order systems on principal bundles deserve attention and can be an interesting topic to study, based on the previous results obtained in  \cite{CoFeMdD}.

We intend in a future work to explore the role of high-order integrators  \cite{Cedric1}, \cite{Cedric2}, \cite{Sina} iin this class of constrained variational problems for optimal control. As we commented in Section  \ref{DMVI}, higher-order interpolations of the continuous curves lead to more accurate approximations of the exact discrete Lagrangian, and therefore to high-order numerical methods (where here, high-order refers to the local truncation error). This problem, in the context of principal bundles and integration of Lagrange-Poincare equations, is a promising line of investigation, in particular how to relate higher-order constrained variational problems on principal bundles with higher-order integrators, such as Galerkin variational integrators and modified symplectic Runge-Kutta methods, using the results for first-order systems given in  \cite{Cedric2} and \cite{Sina2}.


\end{document}